\documentclass[a4paper, 12pt]{amsart}
\usepackage{amssymb, mathdots}
\usepackage[top=3cm, bottom=2cm, left=2cm, right=2cm]{geometry}

\newtheorem{thm}{Theorem}[section]
\newtheorem*{thm*}{Theorem}
\newtheorem{lem}[thm]{Lemma}
\newtheorem{prp}[thm]{Proposition}
\newtheorem{cor}[thm]{Corollary}

\theoremstyle{remark}
\newtheorem{rmk}[thm]{Remark}

\theoremstyle{definition}
\newtheorem{dfn}[thm]{Definition}
\newtheorem{ntn}[thm]{Notation}

\numberwithin{equation}{section}

\newcommand{\CC}{\mathbb{C}}
\newcommand{\NN}{\mathbb{N}}

\newcommand{\RR}{\mathbb{R}}

\newcommand{\ZZ}{\mathbb{Z}}

\newcommand{\Bb}{\mathcal{B}}

\newcommand{\Kk}{\mathcal{K}}
\newcommand{\Tt}{\mathcal{T}}
\newcommand{\Oo}{\mathcal{O}}

\newcommand{\ndim}{\operatorname{dim_{\operatorname{nuc}}}}
\newcommand{\id}{\operatorname{id}}

\newcommand{\MCE}{\operatorname{MCE}}
\newcommand{\lsp}{\operatorname{span}}

\newcommand{\ftn}[3]{#1  : #2 \rightarrow #3}
\newcommand{\setof}[2]{\left\{#1 \mid #2 \right\}}
\newcommand{\Mor}{\operatorname{Mor}}
\newcommand{\Obj}{\operatorname{Obj}}

\title{UCT-Kirchberg algebras have nuclear dimension one}

\author{Efren Ruiz}
\address{Department of Mathematics\\University of Hawaii,
Hilo\\200 W. Kawili St.\\
Hilo, Hawaii\\
96720-4091 USA}
\email{ruize@hawaii.edu}

\author{Aidan Sims}
\address{School of Mathematics and Applied Statistics \\
Faculty of Engineering and Information Sciences \\
University of Wollongong NSW 2522\\
Australia}
\email{asims@uow.edu.au}

\author{Adam P. W. S{\o}rensen}
\address{School of Mathematics and Applied Statistics \\
Faculty of Engineering and Information Sciences \\
University of Wollongong NSW 2522}
\address{Department of Mathematical Sciences \\
University of Copenhagen \\
Universitetsparken 5, 2100 Copenhagen \O \\
Denmark}
\email{apws@math.uio.no}

\date{\today}

\subjclass[2010]{Primary 46L05; Secondary 46L35, 46L85}
\keywords{$C^*$-algebra; Kirchberg algebra; nuclear dimension; higher-rank graph}
\thanks{This research has been supported by funding from the Simons Foundation
(Collaboration Grant \#279369 to Efren Ruiz), the Australian Research Council,
and the Danish Council for Independent Research {\textbar} Natural Sciences.
The first named author would like to thank the School of Mathematics and Applied
Statistics at the University of Wollongong for hospitality during his visit
where this work was carried out.}

\begin{document}

\begin{abstract}
We prove that every Kirchberg algebra in the UCT class has nuclear dimension~1. We first
show that Kirchberg $2$-graph algebras with trivial $K_0$ and finite $K_1$ have nuclear
dimension~1 by adapting a technique developed by Winter and Zacharias for Cuntz algebras.
We then prove that every Kirchberg algebra in the UCT class is a direct limit of
$2$-graph algebras to obtain our main theorem.
\end{abstract}

\maketitle

\section{Introduction}

Nuclear dimension for $C^*$-algebras, introduced by Winter and Zacharias in
\cite{WinterZacharias:Adv10}, is a noncommutative notion of rank based on covering
dimension for topological spaces. It has been shown \cite{SatoWhiteWinter:xx14,
TomsWinter:GAFA13, Winter:IM12} to be closely related to $\mathcal{Z}$-stability and
hence to the classification program for simple nuclear $C^*$-algebras. Winter and
Zacharias showed that all UCT-Kirchberg algebras (i.e., separable, nuclear, simple,
purely infinite $C^{*}$-algebras in the UCT class) have nuclear dimension at most 5 and
asked whether the precise value of their dimension is determined by algebraic properties
of their $K$-groups, such as torsion \cite[Problem~9.2]{WinterZacharias:Adv10}. Matui and
Sato \cite{MatuiSato} subsequently improved the estimate for simple Kirchberg algebras
from~5 to~3, and their result is valid for non-UCT Kirchberg algebras, if any exist; and
Barlak, Enders, Matui, Szab\'o and Winter have showed how to recover the Matui-Sato
estimate from a general relationship between the nuclear dimension of an
$\Oo_\infty$-stable $C^*$-algebra and its $\Oo_2$-stablization that implies, in
particular, that every $\Oo_\infty$-absorbing $C^*$-algebra with compact metrizable
primitive-ideal space has nuclear dimension at most~7
\cite{BarlakEndersEtAl:arXiv:1312.6289}. For UCT-Kirchberg algebras, a further
improvement due to Enders \cite{Enders:arXiv:1405.6538} shows that every UCT-Kirchberg
algebra with torsion-free $K_1$ has nuclear dimension~1. But the question remained open
whether torsion in $K_1$ precludes having nuclear dimension~1. Here we prove that every
UCT-Kirchberg algebra, regardless of its K-theory, has nuclear dimension~1. This settles
the question posed by Winter and Zacharias, and suggests another question: does every
simple separable non-AF $C^*$-algebra with finite nuclear dimension have nuclear
dimension~1? Indeed, the appearance of the present paper in preprint form motivated
Bosa--Brown--Sato--Tikuisis--White--Winter in their remarkable recent preprint
\cite{BBSTWW} to pursue the optimal bound of~1 for the nuclear dimension of very broad
classes of simple $C^*$-algebras (see \cite[page~4]{BBSTWW}).

We recall the definition of nuclear dimension. A completely positive map $\phi$ between
$C^*$-algebras is order zero if $ab = 0$ implies $\phi(a)\phi(b) = 0$ for positive $a,b$.
A separable $C^{*}$-algebra $A$ has \emph{nuclear dimension $r$}, denoted by
$\mathrm{dim}_{\mathrm{nuc}} ( A ) = r$, if $r$ is the least element in $\NN \cup \{
\infty \}$ for which there is a sequence of order-$r$ factorisations $\phi_n \circ
\psi_n$ through finite dimensional $C^*$-algebras $\mathcal{F}_n$ that pointwise
approximate the identity map on $A$. That is, there exist finite dimensional
$C^{*}$-algebras $( \mathcal{F}_{n} )_{ n \in \NN }$, completely positive, contractive
linear maps $( \ftn{\psi_{n}}{A}{ \mathcal{F}_{n} } )_{n \in \NN }$, and completely
positive linear maps $( \ftn{\phi_{n}}{ \mathcal{F}_{n} }{A} )_{n \in \NN }$ such that
\begin{enumerate}
\item $\lim_{n \to \infty } \| a - \phi_{n} \circ \psi_{n} (a) \| = 0$ for all $a \in
    A$ and
\item each $\mathcal{F}_{n}$ has a decomposition $\bigoplus_{i=0}^{r}
    \mathcal{F}_{n,i}$ such that $\phi_{n} \vert_{ \mathcal{F}_{n,i}}$ is an
    order-zero completely positive contraction for each $i$.
\end{enumerate}
Winter and Zacharias' calculation of nuclear dimension for Cuntz algebras in
\cite{WinterZacharias:Adv10} is related to a construction of Kribs and Solel
\cite{KribsSolel:JAMS07} which builds from a directed graph $E$ a sequence of directed
graphs $( E(n) )_{ n = 1}^{\infty}$ comprising a kind of generalised combinatorial
solenoid. The first two authors, with Tomforde, used the Kribs-Solel construction
explicitly to compute nuclear dimension of many purely infinite nonsimple graph algebras
in \cite{RuizSimsTomforde:arXiv:1312.0507}. The key feature of $E(n)$ used in
nuclear-dimension calculations is that there are inclusions of the Toeplitz algebras
$\ftn{\iota_n}{\Tt C^*(E)}{\Tt C^*(E(n))}$ that can be approximated modulo compacts by
order-1 factorisations through finite-dimensional $C^*$-algebras. These are parlayed into
an approximation of the identity on $C^*(E)$ using a completely positive splitting
$C^*(E) \to \Tt C^*(E)$ (which exists since every graph algebra is nuclear), a suitable
sequence of homomorphisms $j_n : C^*(E(n)) \to C^*(E) \otimes \Kk$, and classification
results for purely infinite $C^*$-algebras.

Here, we develop a version of this machinery for higher-rank graphs and their
$C^*$-algebras as introduced in \cite{KumjianPask:NYJM00}.  We use this, and the fact
that each Kirchberg algebra with trivial $K_0$ and finite $K_1$ is isomorphic to a tensor
product of purely infinite simple graph $C^*$-algebras, to show that such Kirchberg
algebras have nuclear dimension~1. This is, on the face of it, somewhat surprising: one
expects the tensor product of $C^*$-algebras of nuclear dimension $n$ and $m$ to have
nuclear dimension $(n+1)(m+1)-1$, as is the case for commutative $C^*$-algebras. Here the
underlying combinatorial model plays a key role. For graph $C^*$-algebras
\cite{RuizSimsTomforde:arXiv:1312.0507}, following
\cite[Section~7]{WinterZacharias:Adv10}, the $n$\textsuperscript{th} order-1
factorisation through finite-dimensional $C^*$-algebras was obtained by producing a pair
of pavings of the combinatorial quadrant $\NN \times \NN$ by copies of a completely
positive contractive $n \times n$ real matrix so that when the two pavings are
superimposed, entries at any fixed distance from the diagonal, and sufficiently far from
the origin, approach~1 as $n \to \infty$. For $2$-graphs, we must perform an analogous
decomposition in $\NN^2 \times \NN^2$, and the expected increase in dimension turns out
to be illusory: facing no topological constraints, we can just pick convenient bijections
of $\NN^2 \times \NN^2$ onto $\NN \times \NN$ that carry points close to the diagonal to
points close to the diagonal. In essence, this is the idea underlying the technical
argument of Theorem~\ref{thm: order 1 approximation}.

To deduce our main theorem, we apply the Kirchberg-Phillips theorem to show that every
stable UCT-Kirchberg algebra is a direct limit of purely infinite $2$-graph
$C^*$-algebras that are known to have nuclear dimension~1 either by Enders' results or by
the result of the preceding paragraph. We believe that this realisation of Kirchberg
algebras using $2$-graph $C^*$-algebras has independent interest: many important
$C^*$-algebraic properties are preserved under direct limits, and as our approach here
shows, the combinatorial structure of $2$-graphs can provide a good line of attack in
establishing structural properties of the associated $C^*$-algebras. Related
combinatorial models have been used to great effect to study weak semiprojectivity
\cite{Spielberg:CMB07} and prime-order automorphisms \cite{Spielberg:JraM07} of
UCT-Kirchberg algebras. Our techniques also have potential applications to calculations
of nuclear dimension for large classes of nonsimple Kirchberg algebras along the lines of
\cite{RuizSimsTomforde:arXiv:1312.0507} (see Remark~\ref{rmk:SpecSeq}).

\smallskip

We start with some background on higher-rank graphs in Section~\ref{sec:k-graphs and
repns}. In Section~\ref{sec:k-graphKS}, we show how to generalise the Kribs-Solel
construction to higher-rank graphs, and produce analogues of the homomorphisms $\iota_n$
and $j_n$ discussed in the preceding paragraph. In Section~\ref{sec:CP1-graphKS}, we
investigate how our construction behaves with respect to the cartesian-product
construction for $k$-graphs \cite{KumjianPask:NYJM00}; this allows us to relate the
results of the preceding section to tensor products of graph $C^*$-algebras. In
Section~\ref{sec:order1factorisations}, we show that for $2$-graphs, the maps
$\ftn{\tilde\iota_n}{C^*( \Lambda )}{C^*( \Lambda(n))}$ induced by the $\iota_n$ can be
asymptotically approximated by sums of two order-zero maps through AF-algebras. In
Section~\ref{sec:main}, we prove our main result. We first show that if $E$ and $F$ are
1-graphs whose $C^*$-algebras are Kirchberg algebras with $K$-theory $(T, 0)$ and
$(0,\ZZ)$ respectively, where $T$ is a finite abelian group, then for the $2$-graph
$\Lambda = E \times F$, the composition $j_{(n_{1} , n_{2} )} \circ \tilde\iota_{(n_{1},
n_{2} )}$ implements multiplication by $n_1n_2$ in $K_*(C^*(\Lambda)) \cong (0, T)$. By
choosing increasing $(n_{1}, n_{2} )$ for which multiplication by $n_1n_2$ is the
identity on $T$, and applying classification machinery, we deduce that UCT-Kirchberg
algebras with trivial $K_0$ and finite $K_1$ have nuclear dimension~1. We then prove our
main result by combining this with Enders' results and a direct-limit argument.
%

\section{Higher rank graphs and their $C^{*}$-algebras} \label{sec:k-graphs and repns}

We recall the standard conventions for $k$-graphs and their $C^*$-algebras introduced in
\cite{KumjianPask:NYJM00}. We regard $\NN^k$ as an additive semigroup with identity 0.
For $m,n \in \NN^k$, we write $m \vee n$ for their coordinatewise maximum. We write $n
\leq m$ if $n_{i} \leq m_{i}$ for all $i$. We also write $n < m$ to mean $n_{i} < m_{i}$
for all $i$. Warning: this convention means that $n < m$ and $n \lneq m$ mean different
things.

\begin{dfn}[{See~\cite{KumjianPask:NYJM00}}] \label{dfn:k-graph}
Let $k \in \NN \setminus \{0\}$.  A \emph{graph of rank $k$}, or \emph{$k$-graph}, is a
pair $(\Lambda,d)$ where $\Lambda$ is a countable category and $d$ is a functor from
$\Lambda$ to $\NN^k$ that satisfies the \emph{factorisation property}:  for all $\lambda
\in \Mor(\Lambda)$ and all $m,n \in \NN^k$ such that $d(\lambda) = m+n$, there exist
unique morphisms $\mu$ and $\nu$ in $\Mor(\Lambda)$ such that $d(\mu) = m$, $d(\nu) = n$
and $\lambda = \mu\nu$.
\end{dfn}

Since we are regarding $k$-graphs as generalised directed graphs, we refer to elements of
$\Mor(\Lambda)$ as \emph{paths}.  The factorisation property implies that $\setof{\id_o}{
o \in \Obj(\Lambda) } = \setof{\lambda \in \Mor(\Lambda)}{ d(\lambda) = 0}$. So the
codomain and domain maps $\ftn{\operatorname{cod},
\operatorname{dom}}{\Mor(\Lambda)}{\Obj(\Lambda)}$ determine maps $r : \lambda \mapsto
\id_{\operatorname{cod}(\lambda)}$ and $s : \lambda \mapsto
\id_{\operatorname{dom}(\lambda)}$ from $\Mor(\Lambda)$ to $d^{-1}(0)$. We refer to the
elements of $d^{-1}(0)$ as \emph{vertices}, and call $r(\lambda)$ and $s(\lambda)$ the
range and source of $\lambda$. We have $r(\lambda)\lambda = \lambda = \lambda
s(\lambda)$. We write $\lambda \in \Lambda$ to mean $\lambda \in \Mor(\Lambda)$.

We use the following notation from \cite{PaskQuiggEtAl:JA05}: given $\lambda \in \Lambda$
and $E \subseteq \Lambda$, we define
\[
\lambda E :=\setof{\lambda\mu}{\mu \in E, r(\mu) = s(\lambda)} \  \text{ and } \ E\lambda := \setof{\mu\lambda }{\mu \in E, s(\mu) =
r(\lambda) }.
\]
In particular if $d(v) = 0$, then $vE = \setof{\lambda \in E}{ r(\lambda) = v }$, and $Ev
= \setof{\lambda \in E}{ s(\lambda) = v }$.

For $n \in \NN^{k}$, we let $\Lambda^{n} = d^{-1} (n)$.  For $n < m$, we set $\Lambda^{[n
, m)} = \setof{\lambda \in \Lambda}{n \leq d(\mu) < m}$.  We use the convention that for
$m \le n \le d(\lambda)$, the path $\lambda(m,n)$ is the unique element of
$\Lambda^{n-m}$ such that $\lambda = \lambda'\lambda(m,n)\lambda''$ for some $\lambda'
\in \Lambda^m$. An application of the factorisation property shows that for $m \le
d(\lambda)$ we have $\lambda = \lambda(0,m)\lambda(m,d(\lambda))$.

As in \cite{KumjianPask:NYJM00}, we say that $\Lambda$ is \emph{row-finite} if $v
\Lambda^n$ is finite for each $v \in \Lambda^0$ and $n \in \NN^k$.  We say that $\Lambda$
has \emph{no sources} if each $v \Lambda^n \not= \emptyset$. All $k$-graphs in this paper
will be row-finite with no sources.

\begin{dfn}[See~{\cite{RaeburnSimsEtAl:JFA04}}] \label{dfn:common extensions}
Let $(\Lambda,d)$ be a $k$-graph. Given $\mu,\nu \in \Lambda$, we say that $\lambda$ is a
\emph{minimal common extension} of $\mu$ and $\nu$ if $\lambda \in \mu\Lambda \cap
\nu\Lambda$ and $d(\lambda) = d(\mu) \vee d(\nu)$. We denote the collection $\mu\Lambda
\cap \nu\Lambda \cap \Lambda^{d(\mu) \vee d(\nu)}$ of all minimal common extensions of
$\mu$ and $\nu$ by $\MCE(\mu,\nu)$. We define
\[\Lambda^{\mathrm{min}}(\mu,\nu) := \setof{(\alpha,\beta) \in \Lambda \times \Lambda }{ \mu\alpha =  \nu\beta \in \MCE(\mu,\nu) }.
\]
\end{dfn}
For a row-finite $k$-graph $\Lambda$, the set $\MCE(\mu,\nu)$ is finite for all $\mu, \nu \in \Lambda$, since each $\MCE(\mu,\nu) \subseteq r(\mu)\Lambda^{d(\mu) \vee d(\nu)}$.  The factorisation property
ensures that $(\alpha,\beta) \mapsto \mu\alpha$ is a bijection from
$\Lambda^{\min}(\mu,\nu)$ to $\MCE(\mu,\nu)$.

The following definition of a Toeplitz-Cuntz-Krieger family for a higher-rank graph is
essentially \cite[Definition~7.1]{RaeburnSims:JOT05}, with the appropriate changes of
conventions to translate from product-systems of graphs to $k$-graphs (see also
\cite[Section~2.2]{anHuefLacaEtAl:JFA14}).

\begin{dfn}
Let $\Lambda$ be a row-finite $k$-graph with no sources.  A \emph{Toeplitz-Cuntz-Krieger
$\Lambda$-family} is a collection $\{t_{\lambda} \}_{\lambda \in \Lambda}$ of partial
isometries in a $C^{*}$-algebra satisfying
\begin{itemize}
\item[(TCK1)] $\{t_{v}\}_{v \in \Lambda^{0}}$ is a collection of mutually
    orthogonal projections;
\item[(TCK2)] $t_{\lambda} t_{\mu} = \delta_{s(\lambda), r(\mu)} t_{\lambda \mu}$ for
    all $\lambda, \mu \in \Lambda$;
\item[(TCK3)] $t_{\lambda}^{*} t_{\lambda} = t_{s(\lambda)}$ for all $\lambda \in
    \Lambda$; and
\item[(TCK4)] $t_{\lambda}^{*} t_{\mu} =  \sum_{(\alpha, \beta) \in
    \Lambda^{\mathrm{min}} (\lambda, \mu)} t_{\alpha} t_{\beta}^{*}$ for all
    $\lambda, \mu \in \Lambda$.
\end{itemize}

\begin{rmk}
In previous treatments (see, for example, \cite{anHuefLacaEtAl:JFA14}), the definition of
a Toeplitz-Cuntz-Krieger $\Lambda$-family for a row-finite $k$-graph $\Lambda$ with no
sources has included the additional relation
\begin{equation}\label{eq:TCK5}
\sum_{\lambda \in v\Lambda^n} t_\lambda t^*_\lambda \le t_v\qquad\text{ for all $v \in \Lambda^0$ and $n \in \NN^k$.}
\end{equation}
But in fact this relation follows from the other four. To see this, observe that if
$\mu,\nu \in \Lambda^n$, then $\Lambda^{\mathrm{min}}(\mu,\nu) = \{s(\mu),s(\mu)\}$ if
$\mu = \nu$ and is empty otherwise. So~(TCK4) shows that the $t_\lambda t^*_\lambda$ for
$\lambda \in \Lambda^n$ are mutually orthogonal, and so the sum on the left-hand side
of~\eqref{eq:TCK5} is a projection. For each $\Lambda$, we have
$\Lambda^{\mathrm{min}}(\lambda, r(\lambda)) = \{(s(\lambda), \lambda)\}$. Hence
\[
\Big(\sum_{\lambda \in v\Lambda^n} t_\lambda t^*_\lambda\Big) t_v
    = \sum_{\lambda \in v\Lambda^n} \sum_{(\alpha,\beta) \in \Lambda^{\mathrm{min}}(\lambda, r(\lambda))} t_\lambda t_\alpha t^*_\beta
    = \sum_{\lambda \in v\Lambda^n} t_\lambda t_{s(\lambda)} t^*_\lambda
    = \sum_{\lambda \in v\Lambda^n} t_\lambda t^*_\lambda.
\]
\end{rmk}

As in \cite{KumjianPask:NYJM00}, a \emph{Cuntz-Krieger $\Lambda$-family} is a collection
$\{s_{\lambda}\}_{\lambda \in \Lambda}$ of partial isometries in a $C^{*}$-algebra
satisfying (TCK1), (TCK2), (TCK3), and
\begin{itemize}
\item[(CK)] $s_{v} = \sum_{\lambda \in v \Lambda^{n}} s_{\lambda} s_{\lambda}^{*}$
    for each $v \in \Lambda^{0}$ and $n \in \NN^{k}$.
\end{itemize}

Let $\Lambda$ be a row-finite $k$-graph with no sources.  There is a universal
$C^{*}$-algebra $\Tt C^{*} (\Lambda)$ generated by a universal Toeplitz-Cuntz-Krieger
$\Lambda$-family $\{t_{\lambda}\}_{\lambda \in \Lambda}$. We call this $C^{*}$-algebra
the \emph{Toeplitz algebra of $\Lambda$}.  There is also a universal $C^{*}$-algebra
$C^{*} (\Lambda)$ generated by a universal Cuntz-Krieger $\Lambda$-family
$\{s_{\lambda}\}_{\lambda \in \Lambda}$.  We call this $C^{*}$-algebra the
\emph{Cuntz-Krieger algebra of $\Lambda$}, or just the $C^*$-algebra of $\Lambda$.
\end{dfn}

\section{The Kribs-Solel construction for $k$-graphs}\label{sec:k-graphKS}

For the duration of this section, we fix a row-finite $k$-graph $\Lambda$ with no
sources. The key tool for understanding nuclear dimension of graph algebras in
\cite{RuizSimsTomforde:arXiv:1312.0507} was a construction due to Kribs and Solel
\cite{KribsSolel:JAMS07}. The first step in our analysis here is to adapt this
construction to $k$-graphs.

Choose $n = (n_1, \dots, n_k) \in \NN^k$ with each $n_i \ge 1$. Let
\[
H_{n} := \setof{a n}{ a \in \ZZ^k } = \setof{m \in \ZZ^k}{ m_i/n_i \in \ZZ \text{ for all $i$} }.
\]
We will often just write $H$ for $H_{n}$. For $m \in \NN^k$, we write $[m]$ for $m + H
\in \ZZ^k/H$. We often identify $\ZZ^k/H$ as a set with $\setof{m \in \NN^k}{ m < n }$.

For $\lambda \in \Lambda$, we define
\[[\lambda]_H := \lambda(0, [d(\lambda)]),
\]
and we usually write $[\lambda]$ for $[\lambda]_H$. So $[\lambda]$ is the unique element
of $\Lambda$ such that $d([\lambda]) < n$ and $\lambda = [\lambda]\lambda'$ with
$d(\lambda') \in H$. The factorisation property implies that if $d(\mu) \in H$, then
$[\lambda\mu] = [\lambda]$.

Following \cite{RuizSimsTomforde:arXiv:1312.0507}, we write $\Lambda^{<n} := \setof{\lambda
\in \Lambda }{d(\lambda) <n }$.  Let
\[\Lambda(n) := \setof{(\lambda, \lambda') \in \Lambda \times \Lambda^{<n} }{s(\lambda) = r(\lambda')}.
\]
We aim to make this set into a $k$-graph. For $(\lambda, \lambda') \in \Lambda(n)$,
define
\[d((\lambda, \lambda')) := d(\lambda).
\]
So $\Lambda(n)^0 = \setof{(r(\lambda),\lambda) }{ \lambda \in \Lambda^{<n} }$. Define
$\ftn{r,s}{\Lambda(n)}{\Lambda(n)^0}$ by
\begin{align*}
s((\lambda, \lambda')) &:= (s(\lambda), \lambda'), \quad\text{and}\\
r((\lambda, \lambda')) &:= (r(\lambda), [\lambda\lambda']).
\end{align*}
Identify $\Lambda(n)^0$ with $\Lambda^{<n}$ via $(r(\lambda),\lambda) \mapsto \lambda$.
Then $s((\lambda,\lambda')) = \lambda'$ and $r((\lambda, \lambda')) = [\lambda\lambda']$.
Suppose that $s((\lambda,\lambda')) = r((\mu,\mu'))$; that is, $\lambda' = [\mu\mu']$.
Then we define
\[
(\lambda,\lambda')(\mu,\mu') := \big(\lambda\mu,\mu'\big).
\]

\begin{lem}
Under the operations just described, $\Lambda(n)$ is a row-finite $k$-graph with no
sources.
\end{lem}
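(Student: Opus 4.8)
The plan is to verify in turn that $\Lambda(n)$ is a countable category, that $d$ is a degree functor, that the factorisation property holds, and finally that the resulting $k$-graph is row-finite with no sources. Countability is immediate since $\Lambda(n) \subseteq \Lambda \times \Lambda^{<n}$ and $\Lambda$ is countable.

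First I would check the category axioms. The key preliminary observation is that, under the identification of $\Lambda(n)^0$ with $\Lambda^{<n}$, the composability condition $s((\lambda,\lambda')) = r((\mu,\mu'))$ unravels to the single equation $\lambda' = [\mu\mu']$; since $r([\mu\mu']) = r(\mu)$ this forces $s(\lambda) = r(\lambda') = r(\mu)$, so that $\lambda\mu$ is defined in $\Lambda$ and $(\lambda\mu,\mu') \in \Lambda(n)$. Thus the composite is well defined. For associativity I would compose $(\lambda,\lambda')$, $(\mu,\mu')$, $(\nu,\nu')$ both ways; each bracketing yields $(\lambda\mu\nu, \nu')$, and the only nontrivial point is to confirm that the right-hand bracketing is composable, i.e. that $\lambda' = [\mu\nu\nu']$. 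This follows from $\lambda' = [\mu\mu'] = [\mu[\nu\nu']]$ together with the identity $[\mu\nu\nu'] = [\mu[\nu\nu']]$, which in turn comes from the already-noted fact that $[\kappa\rho] = [\kappa]$ whenever $d(\rho) \in H$, applied with $\rho$ the degree-$H$ tail of $\nu\nu'$. The identity morphism at the vertex $\lambda' \in \Lambda^{<n}$ is $(r(\lambda'),\lambda')$, and I would check directly, using $[\lambda'] = \lambda'$, that it has range and source $\lambda'$ and acts as a two-sided unit. That $d$ is a functor is then immediate from $d((\lambda\mu,\mu')) = d(\lambda)+d(\mu)$ and $d((r(\lambda'),\lambda')) = 0$.

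The heart of the argument is the factorisation property. Given $(\lambda,\lambda') \in \Lambda(n)$ and $p,q \in \NN^k$ with $d(\lambda) = p+q$, I would apply the factorisation property in $\Lambda$ to write $\lambda = \mu\nu$ uniquely with $d(\mu)=p$ and $d(\nu)=q$. Any factorisation $(\lambda,\lambda') = (\mu,\mu')(\nu,\nu')$ of the prescribed degrees must equal $(\mu\nu,\nu')$, forcing $\nu' = \lambda'$, while composability forces $\mu' = [\nu\nu'] = [\nu\lambda']$; this yields uniqueness. For existence I would set $(\nu,\nu') := (\nu,\lambda')$ and $(\mu,\mu') := (\mu,[\nu\lambda'])$ and verify both lie in $\Lambda(n)$ (the compatibilities reduce to $s(\nu) = s(\lambda) = r(\lambda')$ and $s(\mu) = r(\nu) = r([\nu\lambda'])$) and compose back to $(\lambda,\lambda')$.

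Finally I would treat row-finiteness and the no-sources condition, both of which descend from the corresponding properties of $\Lambda$. Fix a vertex $\lambda' \in \Lambda^{<n}$ and $p \in \NN^k$. A morphism $(\lambda,\lambda'') \in \lambda'\Lambda(n)^p$ satisfies $d(\lambda) = p$ and $[\lambda\lambda''] = \lambda'$, whence $r(\lambda) = r(\lambda')$; row-finiteness of $\Lambda$ makes $r(\lambda')\Lambda^p$ finite, and for each such $\lambda$ there are only finitely many candidate $\lambda'' \in \bigcup_{q<n} s(\lambda)\Lambda^q$, giving row-finiteness. For no sources, the congruence $d(\lambda'')_i \equiv d(\lambda')_i - p_i \pmod{n_i}$ with $0 \le d(\lambda'')_i < n_i$ determines a degree $\ell := d(\lambda'') < n$, and then $d(\zeta) := p + \ell - d(\lambda')$ lies in $H \cap \NN^k$ (here $d(\lambda') < n$ rules out negative coordinates). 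Since $\Lambda$ has no sources I may choose $\zeta \in s(\lambda')\Lambda^{d(\zeta)}$, factor $\lambda'\zeta = \lambda\lambda''$ with $d(\lambda) = p$, and observe that $[\lambda\lambda''] = [\lambda'\zeta] = [\lambda'] = \lambda'$ because $d(\zeta) \in H$; thus $(\lambda,\lambda'')$ witnesses that $\lambda'\Lambda(n)^p$ is nonempty. The main obstacle is keeping the bracketing identities and the modular bookkeeping for degrees straight; once the interaction between $[\cdot]$ and composition is pinned down, every axiom reduces cleanly to the corresponding statement in $\Lambda$.
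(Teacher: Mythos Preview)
Your proposal is correct and follows essentially the same route as the paper: verify the category axioms using the bracketing identity $[\kappa\rho]=[\kappa]$ for $d(\rho)\in H$, lift the factorisation property from $\Lambda$, and handle row-finiteness and no-sources via the modular arithmetic on degrees (the paper does these last two together by exhibiting a bijection $(r(\lambda'),\lambda')\Lambda(n)^p \leftrightarrow s(\lambda')\Lambda^{d(\zeta)}$, whereas you treat them separately, but your $d(\zeta)=p+\ell-d(\lambda')$ is exactly the paper's $p=m+[d(\lambda)-m]-d(\lambda)$). The one step you skip is the explicit check that $r$ and $s$ are compatible with composition---in particular that $r\big((\lambda,\lambda')(\mu,\mu')\big)=[\lambda\mu\mu']$ equals $[\lambda\lambda']$---but this follows immediately from the same identity $[\kappa[\eta]]=[\kappa\eta]$ you already isolate for associativity, so it is not a genuine gap.
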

\begin{proof}
We show that $\Lambda(n)$ is a category. We first check that $s$ and $r$ are compatible
with composition. Suppose that $s((\lambda,\lambda')) = r((\mu,\mu'))$. Then
\[s((\lambda,\lambda')(\mu,\mu'))
    = s((\lambda\mu, \mu'))
    = \mu' = s((\mu,\mu')).
\]
Writing $\mu\mu' = [\mu\mu']\tau = \lambda'\tau$, we have
\[r\big((\lambda,\lambda')(\mu,\mu')\big)
    = r((\lambda\mu, \mu'))
    = \big[\lambda\mu\mu'\big]
    = \big[\lambda[\mu\mu']\tau\big]
    = [\lambda\lambda'\tau];
\]
and since $d(\tau) \in H$, we have $r((\lambda,\lambda')(\mu,\mu')) =
[\lambda\lambda'\tau] = [\lambda\lambda'] = r((\lambda,\lambda'))$.

We now check that $r((\lambda,\lambda'))$ and $s((\lambda,\lambda'))$ act as left- and
right identities for $(\lambda, \lambda')$:
\[r((\lambda,\lambda'))(\lambda,\lambda')
    = (r(\lambda), [\lambda\lambda'])(\lambda,\lambda')
    = (r(\lambda)\lambda, \lambda')
    = (\lambda,\lambda'),
\]
and
\[(\lambda,\lambda')s((\lambda,\lambda'))
    = (\lambda, \lambda')\big(s(\lambda), \lambda'\big)
    = (\lambda s(\lambda), \lambda')
    = (\lambda, \lambda').
\]
To check associativity, suppose that $s((\lambda,\lambda')) = r((\mu,\mu'))$ and
$s((\mu,\mu')) = r((\nu,\nu'))$.  Then
\begin{align*}
((\lambda, \lambda')(\mu,\mu'))(\nu,\nu')
    &= (\lambda\mu, \mu')(\nu, \nu')
    = (\lambda\mu\nu, \nu'),\\
    &= (\lambda,\lambda')(\mu\nu, \nu')
    = (\lambda,\lambda')((\mu,\mu')(\nu,\nu')).
\end{align*}
So $\Lambda(n)$ is a category.

We check that $d$ is a functor:
\[d((\lambda,\lambda')(\mu,\mu'))
    = d((\lambda\mu, \mu'))
    = d(\lambda\mu) = d(\lambda) + d(\mu)
    = d((\lambda,\lambda')) + d((\mu, \mu')).
\]
Now we check the factorisation property. Suppose that $d((\lambda,\lambda')) = p+q$. Then
$d(\lambda) = p+q$, and the factorisation property in $\Lambda$ gives $\mu \in \Lambda^p$
and $\nu \in \Lambda^q$ such that $\lambda = \mu\nu$. Now $(\nu,\lambda') \in
\Lambda(n)^q$ and has range $r((\nu,\lambda')) = [\nu\lambda']$. Hence $(\mu, [\nu\lambda'])
\in \Lambda(n)^p r((\nu,\lambda'))$, and $(\mu,[\nu\lambda'])(\nu,\lambda') = (\mu\nu,
\lambda') = (\lambda, \lambda')$. For uniqueness, suppose that $(\alpha, \alpha') \in
\Lambda(n)^p$ and $(\beta,\beta') \in \Lambda(n)^q$ satisfy
$(\alpha,\alpha')(\beta,\beta') = (\lambda, \lambda')$. By definition of composition, we
have $(\alpha\beta, \beta') = (\lambda,\lambda')$. This forces $\beta' = \lambda'$ and
$\alpha\beta = \lambda$. Since $d(\alpha) = d((\alpha, \alpha')) = p$ and $d(\beta) =
d((\beta,\beta')) = q$, the factorisation property in $\Lambda$ forces $\alpha = \mu$ and
$\beta = \nu$. Since $(\alpha, \alpha')$ and $(\beta, \beta')$ are composable, we have
$\alpha' = s((\mu, \alpha')) = r ((\nu, \lambda')) = [\nu\lambda']$. Hence $\Lambda(n)$
is a $k$-graph.

To see that $\Lambda(n)$ is row-finite with no sources, take $(r(\lambda), \lambda) \in
\Lambda(n)^0$ and $m \in \NN^k$. Then
\begin{align*}
(r(\lambda),\lambda)\Lambda(n)^m
    &= \setof{(\mu, \mu')}{ \mu \in \Lambda^m, \mu' \in s(\mu)\Lambda^{<n}, [\mu\mu'] = \lambda } \\
    &= \setof{(\mu, \mu')}{ \mu \in \Lambda^m, \mu' \in s(\mu)\Lambda^{[d(\lambda) - m]}, [\mu\mu'] = \lambda }.
\end{align*}
Since $m + [d(\lambda) - m]$ is positive and congruent to $d(\lambda)$ mod $H$, we have
$m + [d(\lambda) - m] \ge d(\lambda)$. Let $p := m + [d(\lambda) - m] - d(\lambda) \in
H$. Then
\[(r(\lambda),\lambda)\Lambda(n)^m
    = \big\{\big((\lambda\nu)(0,m), (\lambda\nu)(m, p+d(\lambda))\big) \mathbin{\big|}  \nu \in s(\lambda)\Lambda^p\big\},
\]
which is finite and nonempty because $s(\lambda)\Lambda^p$ is finite and nonempty.
\end{proof}

To work with Toeplitz-Cuntz-Krieger $\Lambda(n)$-families we first compute
$\Lambda(n)^{\min}((\lambda,\lambda'),(\mu,\mu'))$.

\begin{lem}\label{lem:Lambda(n)min}
For $(\lambda,\lambda'), (\mu,\mu') \in \Lambda(n)$, if $[\lambda\lambda'] \not=
[\mu\mu']$, then $\Lambda(n)^{\min}((\lambda,\lambda'), (\mu,\mu')) = \emptyset$;
otherwise,
\begin{equation}\label{eq:Lmin desc}
\begin{split}
\Lambda(n)^{\min}((\lambda,\lambda'),(\mu,\mu'))
    = \big\{((\alpha,\tau), (\beta,\tau)) \mathbin{\big|} {}&(\alpha,\beta) \in \Lambda^{\min}(\lambda,\mu),\\
                &\quad\tau \in s(\alpha)\Lambda^{<n}, [\alpha\tau] = \lambda' \text{ and } [\beta\tau] = \mu'\big\}.
\end{split}
\end{equation}
\end{lem}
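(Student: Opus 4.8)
The plan is to unwind the definition of $\Lambda(n)^{\min}$ directly, converting each condition on $\Lambda(n)$ into the corresponding statement about $\Lambda$. Recall that $\Lambda(n)^{\min}((\lambda,\lambda'),(\mu,\mu'))$ consists of the pairs $((\sigma,\sigma'),(\eta,\eta'))$ for which the products $(\lambda,\lambda')(\sigma,\sigma')$ and $(\mu,\mu')(\eta,\eta')$ are both defined, are equal, and have degree $d((\lambda,\lambda')) \vee d((\mu,\mu')) = d(\lambda) \vee d(\mu)$. I would first dispose of the degenerate case: any common extension lies in $(\lambda,\lambda')\Lambda(n) \cap (\mu,\mu')\Lambda(n)$, and every path in $(\lambda,\lambda')\Lambda(n)$ has range $r((\lambda,\lambda')) = [\lambda\lambda']$, while every path in $(\mu,\mu')\Lambda(n)$ has range $[\mu\mu']$. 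So the intersection is empty unless $[\lambda\lambda'] = [\mu\mu']$, which gives the first assertion.

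Assuming $[\lambda\lambda'] = [\mu\mu']$, I would prove \eqref{eq:Lmin desc} by establishing both inclusions. For the forward inclusion, take $((\sigma,\sigma'),(\eta,\eta'))$ in the set. Composability forces $\lambda' = [\sigma\sigma']$ and $\mu' = [\eta\eta']$, and the composition rule yields $(\lambda,\lambda')(\sigma,\sigma') = (\lambda\sigma,\sigma')$ and $(\mu,\mu')(\eta,\eta') = (\mu\eta,\eta')$. Equating these gives $\lambda\sigma = \mu\eta$ and $\sigma' = \eta' =: \tau$. Since $d((\lambda,\lambda')) = d(\lambda)$, the degree condition reads $d(\lambda\sigma) = d(\lambda) \vee d(\mu)$, which says precisely that $\lambda\sigma = \mu\eta \in \MCE(\lambda,\mu)$; equivalently $(\sigma,\eta) \in \Lambda^{\mathrm{min}}(\lambda,\mu)$. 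Writing $\alpha := \sigma$, $\beta := \eta$, and rereading the membership and composability conditions, I obtain $\tau \in s(\alpha)\Lambda^{<n}$, $[\alpha\tau] = \lambda'$ and $[\beta\tau] = \mu'$, which is exactly the right-hand side of \eqref{eq:Lmin desc}.

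For the reverse inclusion, I would begin with $(\alpha,\beta) \in \Lambda^{\mathrm{min}}(\lambda,\mu)$ and $\tau \in s(\alpha)\Lambda^{<n}$ satisfying $[\alpha\tau] = \lambda'$ and $[\beta\tau] = \mu'$, and verify that $((\alpha,\tau),(\beta,\tau))$ genuinely belongs to $\Lambda(n)^{\min}((\lambda,\lambda'),(\mu,\mu'))$. The key small observation is that $\lambda\alpha = \mu\beta$ forces $s(\alpha) = s(\beta)$, so both $(\alpha,\tau)$ and $(\beta,\tau)$ lie in $\Lambda(n)$; the hypotheses $[\alpha\tau] = \lambda'$ and $[\beta\tau] = \mu'$ make the two products composable; the composition rule produces the common value $(\lambda\alpha,\tau) = (\mu\beta,\tau)$; and $d(\lambda\alpha) = d(\lambda) \vee d(\mu)$ because $\lambda\alpha \in \MCE(\lambda,\mu)$, confirming the required degree and hence minimality.

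The argument is essentially bookkeeping. I expect the only point needing care to be keeping the identification of $\Lambda(n)^0$ with $\Lambda^{<n}$ and the formulas for $r$ and $s$ straight, so that the range-matching condition emerges in the form $\lambda' = [\alpha\tau]$ rather than $\lambda' = \alpha\tau$; the one genuinely substantive step is the degree translation, which converts the $\MCE$ condition in $\Lambda(n)$ into the $\MCE$ condition in $\Lambda$ and relies on the identity $d((\lambda,\lambda')) = d(\lambda)$.
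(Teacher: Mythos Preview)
Your proposal is correct and follows essentially the same approach as the paper's proof: both arguments handle the $[\lambda\lambda'] \neq [\mu\mu']$ case by noting the ranges differ, and then establish~\eqref{eq:Lmin desc} by checking both inclusions directly from the definitions of composition, range, source, and degree in $\Lambda(n)$. The only cosmetic difference is that the paper proves the reverse inclusion first and then the forward one, whereas you do them in the opposite order.
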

\begin{proof}
If $[\lambda\lambda'] \not= [\mu\mu']$, then $r((\lambda, \lambda') ) \not= r((\mu,
\mu'))$, and so $\Lambda(n)^{\min}((\lambda,\lambda'), (\mu,\mu')) = \emptyset$.

Suppose that $[\lambda\lambda'] = [\mu\mu']$.  Suppose further that $(\alpha,\beta) \in
\Lambda^{\min}(\lambda,\mu)$, that $\tau \in s(\alpha)\Lambda^{<n}$, and that
$[\alpha\tau] = \lambda'$ and $[\beta\tau] = \mu'$. Then $(\alpha, \tau), (\beta, \tau)
\in \Lambda(n)$, and $r((\alpha,\tau)) = \lambda' = s((\lambda,\lambda'))$ and
$r((\beta,\tau)) = \mu' = s((\mu,\mu'))$. We have
\[(\lambda,\lambda')(\alpha,\tau) = (\lambda\alpha, \tau)
    = (\mu\beta, \tau) = (\mu,\mu')(\beta,\tau).
\]
Since $d((\lambda\alpha,\tau)) = d(\lambda\alpha) = d(\lambda) \vee d(\mu) = d( ( \lambda, \lambda') ) \vee d( (\mu, \mu') )$, we have
$((\alpha,\tau), (\beta,\tau)) \in \Lambda(n)^{\min}((\lambda,\lambda'), (\mu,\mu'))$.

Conversely, suppose that $(\alpha, \tau), (\beta,\rho) \in
\Lambda(n)^{\min}((\lambda,\lambda'), (\mu,\mu'))$. Then
\begin{equation}\label{eq:CE calc}
(\lambda\alpha,\tau)
    = (\lambda,\lambda')(\alpha,\tau)
    = (\mu,\mu')(\beta,\rho)
    = (\mu\beta,\rho).
\end{equation}
So $\lambda\alpha = \mu\beta$, and
\[d(\lambda\alpha)
    = d((\lambda\alpha, \tau))
    = d((\lambda,\lambda')(\alpha,\tau))
    = d((\lambda,\lambda')) \vee d((\mu,\mu'))
    = d(\lambda) \vee d(\mu),
\]
so $(\alpha,\beta) \in \Lambda^{\min}(\lambda,\mu)$. By~\eqref{eq:CE calc}, $\tau =
s((\lambda\alpha, \tau)) = s((\mu\beta,\rho)) = \rho$. Since $[\alpha\tau] = r((\alpha,
\tau)) = s((\lambda,\lambda')) = \lambda'$ and $[\beta\tau] = r((\beta,\tau)) =
s((\mu,\mu')) = \mu'$, we deduce that $((\alpha,\tau), (\beta, \rho)) = ((\alpha, \tau),
(\beta, \tau))$ belongs to the right-hand side of~\eqref{eq:Lmin desc}.
\end{proof}

For each $n$ we now construct a homomorphism from $C^{*}(\Lambda)$ to $C^{*}(\Lambda(n))$
analogous to those for directed graphs described in
\cite[Lemma~2.5]{RuizSimsTomforde:arXiv:1312.0507}.

\begin{lem}\label{l:homomorphism KribSolel}
Let $\{t_{\lambda} \}_{\lambda \in \Lambda} \subseteq \Tt C^*(\Lambda)$ and
$\{t_{(\lambda, \lambda')} \}_{(\lambda, \lambda') \in \Lambda(n)} \subseteq \Tt
C^*(\Lambda(n))$ be the generating Toeplitz-Cuntz-Krieger families and let $\{s_{\lambda}
\}_{\lambda \in \Lambda} \subseteq C^*(\Lambda)$ and $\{s_{(\lambda, \lambda')}
\}_{(\lambda, \lambda') \in \Lambda(n)} \subseteq C^*(\Lambda(n))$ be the generating
Cuntz-Krieger families.  For $n \in \NN^{k}$, there are homomorphisms $\ftn{\iota_n}{\Tt
C^*(\Lambda)}{\Tt C^*(\Lambda(n))}$ and
$\ftn{\tilde\iota_n}{C^*(\Lambda)}{C^*(\Lambda(n))}$ such that
\begin{equation}\label{eq:iota formula}
\iota_n(t_\lambda) = \sum_{\lambda' \in s(\lambda)\Lambda^{<n}} t_{(\lambda, \lambda')} \quad\text{and}\quad
\tilde\iota_n(s_\lambda) = \sum_{\lambda' \in s(\lambda)\Lambda^{<n}} s_{(\lambda, \lambda')}.
\end{equation}
The homomorphism $\iota_{n}$ descends to the homomorphism $\tilde{\iota}_{n}$ under the
canonical quotient maps from Toeplitz algebras to Cuntz-Krieger algebras.
\end{lem}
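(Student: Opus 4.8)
The strategy is to apply the universal properties of $\Tt C^*(\Lambda)$ and $C^*(\Lambda)$. First I would set, for each $\lambda \in \Lambda$,
\[
T_\lambda := \sum_{\lambda' \in s(\lambda)\Lambda^{<n}} t_{(\lambda,\lambda')} \in \Tt C^*(\Lambda(n)),
\]
and show that $\{T_\lambda\}_{\lambda\in\Lambda}$ is a Toeplitz-Cuntz-Krieger $\Lambda$-family; universality then yields $\iota_n$ with $\iota_n(t_\lambda) = T_\lambda$. Relations (TCK1)--(TCK3) are routine. For (TCK1), each $(v,\lambda')$ with $v \in \Lambda^0$ and $\lambda' \in v\Lambda^{<n}$ is a vertex of $\Lambda(n)$, the summands are mutually orthogonal projections, and the index sets for distinct $v$ are disjoint. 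For (TCK2), I would expand $T_\lambda T_\mu$, apply (TCK2) in $\Lambda(n)$, observe that $t_{(\lambda,\lambda')}t_{(\mu,\mu')}$ survives precisely when $\lambda' = [\mu\mu']$ (which forces $s(\lambda) = r(\mu)$), and collapse the sum to $\delta_{s(\lambda),r(\mu)}T_{\lambda\mu}$. For (TCK3), Lemma~\ref{lem:Lambda(n)min} applied to $\Lambda(n)^{\min}((\lambda,\lambda'),(\lambda,\lambda''))$ together with $\Lambda^{\min}(\lambda,\lambda) = \{(s(\lambda),s(\lambda))\}$ forces $\lambda' = \lambda''$, whence $T_\lambda^* T_\lambda = \sum_{\lambda'} t_{(s(\lambda),\lambda')} = T_{s(\lambda)}$.

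The heart of the argument, and the step I expect to be the main obstacle, is (TCK4). Here I would expand
\[
T_\lambda^* T_\mu = \sum_{\lambda',\mu'} t_{(\lambda,\lambda')}^* t_{(\mu,\mu')}
\]
and apply (TCK4) in $\Lambda(n)$ to each term, invoking Lemma~\ref{lem:Lambda(n)min} to parametrise $\Lambda(n)^{\min}((\lambda,\lambda'),(\mu,\mu'))$ by triples $(\alpha,\beta,\tau)$ with $(\alpha,\beta) \in \Lambda^{\min}(\lambda,\mu)$, $\tau \in s(\alpha)\Lambda^{<n}$, $[\alpha\tau] = \lambda'$ and $[\beta\tau]=\mu'$. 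The key bookkeeping point is that for fixed $(\alpha,\beta)$ and $\tau$ the indices $\lambda' = [\alpha\tau]$ and $\mu' = [\beta\tau]$ are determined, and the constraint $[\lambda\lambda'] = [\mu\mu']$ from Lemma~\ref{lem:Lambda(n)min} is automatic since $\lambda\alpha = \mu\beta$ gives $[\lambda\alpha\tau] = [\mu\beta\tau]$. Re-indexing the double sum over $(\alpha,\beta) \in \Lambda^{\min}(\lambda,\mu)$ and $\tau$ then yields $T_\lambda^* T_\mu = \sum_{(\alpha,\beta)}\sum_\tau t_{(\alpha,\tau)}t_{(\beta,\tau)}^*$. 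On the other side, expanding each $T_\alpha T_\beta^*$ and using (TCK1) to kill the cross terms with distinct source vertices (that is, $t_{(\alpha,\tau)}t_{(\beta,\rho)}^* = t_{(\alpha,\tau)}t_\tau t_\rho t_{(\beta,\rho)}^* = 0$ for $\tau \neq \rho$) shows $T_\alpha T_\beta^* = \sum_\tau t_{(\alpha,\tau)}t_{(\beta,\tau)}^*$, matching the previous expression and establishing (TCK4).

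For $\tilde\iota_n$, I would run the identical computation with the Cuntz-Krieger generators $s_{(\lambda,\lambda')}$ in place of $t_{(\lambda,\lambda')}$, showing that $S_\lambda := \sum_{\lambda' \in s(\lambda)\Lambda^{<n}} s_{(\lambda,\lambda')}$ satisfies (TCK1)--(TCK4); it then remains to verify the Cuntz-Krieger relation (CK). Fixing $v \in \Lambda^0$ and $m \in \NN^k$, I would expand $S_v = \sum_{\sigma \in v\Lambda^{<n}} s_{(v,\sigma)}$ and apply (CK) in $C^*(\Lambda(n))$ to each vertex projection $s_{(v,\sigma)}$, using the description of $(v,\sigma)\Lambda(n)^m$ as $\{(\mu,\mu') : \mu \in v\Lambda^m,\ \mu' \in s(\mu)\Lambda^{<n},\ [\mu\mu'] = \sigma\}$. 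Summing over $\sigma$ and reorganising (each pair $(\mu,\mu')$ contributes to exactly one $\sigma = [\mu\mu']$) gives $S_v = \sum_{\mu \in v\Lambda^m}\sum_{\mu'} s_{(\mu,\mu')}s_{(\mu,\mu')}^* = \sum_{\mu \in v\Lambda^m} S_\mu S_\mu^*$, which is (CK), and the universal property of $C^*(\Lambda)$ then produces $\tilde\iota_n$. Finally, the descent is immediate on generators: writing $q_\Lambda$ and $q_{\Lambda(n)}$ for the canonical quotient maps, one has $q_{\Lambda(n)}(\iota_n(t_\lambda)) = \sum_{\lambda'} s_{(\lambda,\lambda')} = \tilde\iota_n(s_\lambda) = \tilde\iota_n(q_\Lambda(t_\lambda))$, so $q_{\Lambda(n)}\circ\iota_n = \tilde\iota_n\circ q_\Lambda$.
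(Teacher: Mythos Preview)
Your proposal is correct and follows essentially the same approach as the paper: define $T_\lambda$ by the given formula, verify (TCK1)--(TCK4) using Lemma~\ref{lem:Lambda(n)min} for the key (TCK4) step (including the observation that cross terms $t_{(\alpha,\tau)}t_{(\beta,\rho)}^*$ with $\tau\neq\rho$ vanish), then check (CK) for the images $S_\lambda$ in $C^*(\Lambda(n))$ and invoke universality. The only cosmetic differences are that the paper handles (TCK3) and (TCK4) in a single computation of $T_\lambda^*T_\mu$, and runs the (CK) verification in the opposite direction (starting from $\sum_{\lambda\in v\Lambda^m} S_\lambda S_\lambda^*$ rather than from $S_v$); in that direction one must also kill the cross terms $s_{(\mu,\mu')}s_{(\mu,\nu')}^*$ for $\mu'\neq\nu'$, which you use implicitly in your last displayed equality.
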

\begin{proof}
For $\lambda \in \Lambda$, define $T_\lambda := \sum_{\lambda' \in
s(\lambda)\Lambda^{<n}} t_{(\lambda, \lambda')} \in \Tt C^*(\Lambda(n))$. We check that
$\{T_\lambda \}_{\lambda \in \Lambda}$ is a Toeplitz-Cuntz-Krieger $\Lambda$-family. Take
$v,w \in \Lambda^0$. Since $\{t_{(r(\nu),\nu)}\}_{ \nu \in\Lambda^{<n} }$ is a set of
mutually orthogonal projections,
\[T_v^* T_w
    = \sum_{\lambda \in v\Lambda^{<n}} t_{(v,\lambda)} \sum_{\mu \in w\Lambda^{<n}} t_{(w, \mu)}
    = \sum_{\lambda \in v\Lambda^{<n}, \mu \in w\Lambda^{<n}} \delta_{(v,\lambda), (w,\mu)} t_{( v, \lambda )}
    = \delta_{v,w} T_v,
\]
and so $\{T_v \}_{v \in \Lambda^{0}}$ are mutually orthogonal projections,
giving~(TCK1).

For $(\lambda,\lambda'), (\mu,\mu') \in \Lambda(n)$,
we have
\[t_{(\lambda,\lambda')}t_{(\mu,\mu')}
    =  \delta_{s((\lambda,\lambda')), r((\mu,\mu'))} t_{(\lambda,\lambda')(\mu,\mu')}
    = \delta_{s(\lambda), r(\mu)} \delta_{\lambda', [\mu\mu']} t_{(\lambda\mu, \mu')}.
\]
Hence, for $\lambda, \mu \in \Lambda$,
\begin{align*}
T_\lambda T_\mu
    &= \sum_{\lambda' \in s(\lambda)\Lambda^{<n}, \mu' \in s(\mu)\Lambda^{<n}} t_{(\lambda,\lambda')}t_{(\mu,\mu')} \\
    &= \sum_{\lambda' \in s(\lambda)\Lambda^{<n}, \mu' \in s(\mu)\Lambda^{<n}} \delta_{s(\lambda), r(\mu)} \delta_{\lambda', [\mu\mu']} t_{(\lambda\mu,\mu')} \\
    &= \sum_{\mu' \in s(\mu)\Lambda^{<n}} \delta_{s(\lambda), r(\mu)} t_{(\lambda\mu,\mu')}
    = \delta_{s(\lambda), r(\mu)} T_{\lambda\mu}.
\end{align*}
So $\{ T_{\lambda} \}_{ \lambda \in \Lambda}$ satisfies~(TCK2).

For (TCK3)~and~(TCK4), fix $\lambda,\mu \in \Lambda$. We calculate:
\begin{align*}
T^*_\lambda T_\mu
    &= \sum_{\lambda' \in s(\lambda)\Lambda^{<n}, \mu' \in s(\mu)\Lambda^{<n}} t^*_{(\lambda,\lambda')} t_{(\mu,\mu')} \\
    &= \sum_{\substack{\lambda' \in s(\lambda)\Lambda^{<n} \\ \mu' \in s(\mu)\Lambda^{<n}}}
        \Big(\sum_{((\alpha,\alpha'), (\beta,\beta')) \in \Lambda(n)^{\min}((\lambda,\lambda'),(\mu,\mu'))} t_{(\alpha,\alpha')} t^*_{(\beta,\beta')}\Big)\\
    &= \sum_{\substack{\lambda' \in s(\lambda)\Lambda^{<n} \\ \mu' \in s(\mu)\Lambda^{[d(\lambda\lambda') - d(\mu)]}}}
        \sum_{\substack{(\alpha, \beta) \in \Lambda^{\min}(\lambda,\mu), \tau \in s(\alpha)\Lambda^{<n}, \\
                        [\alpha\tau] = \lambda', [\beta\tau] = \mu'}}
            t_{(\alpha,\tau)} t^*_{(\beta,\tau)} \quad\text{(by Lemma~\ref{lem:Lambda(n)min})}\\
    &= \sum_{(\alpha, \beta) \in \Lambda^{\min}(\lambda,\mu)} \Big(\sum_{\tau \in s(\alpha)\Lambda^{<n}}
            t_{(\alpha,\tau)} t^*_{(\beta,\tau)}\Big).
\end{align*}
If $(\alpha,\beta) \in \Lambda^{\min}(\lambda, \mu)$ and $\tau, \rho \in \Lambda^{<n}$,
then $t_{(\alpha,\tau)} t^*_{(\beta,\rho)} \not= 0$ forces $\tau = s((\alpha,\tau)) =
s((\beta,\rho)) = \rho$. So summing over two variables $\tau \in s(\alpha)\Lambda^{<n}$
and $\rho \in s(\alpha)\Lambda^{<n} = s(\beta)\Lambda^{<n}$ adds no new nonzero terms to
the final line of the preceding calculation. Hence
\[T^*_\lambda T_\mu
    = \sum_{(\alpha, \beta) \in \Lambda^{\min}(\lambda,\mu)} \Big(\sum_{\tau \in s(\alpha)\Lambda^{<n}, \rho \in s(\beta)\Lambda^{<n}}
            t_{(\alpha,\tau)} t^*_{(\beta,\rho)}\Big)
    = \sum_{(\alpha,\beta) \in \Lambda^{\min}(\lambda,\mu)} T_\alpha T^*_\beta.
\]
This gives~(TCK4); and~(TCK3) then follows from~(TCK1) because
$\Lambda^{\min}(\lambda,\lambda) = \{(s(\lambda), s(\lambda))\}$. Hence
$\{T_\lambda\}_{\lambda \in \Lambda}$ is a Toeplitz-Cuntz-Krieger $\Lambda$-family.

The universal property of $\Tt C^*(\Lambda)$ gives a homomorphism $\iota_n : \Tt
C^*(\Lambda) \to \Tt C^*(\Lambda(n))$ such that
\[
\iota_n(t_\lambda) = T_\lambda = \sum_{\lambda' \in s(\lambda)\Lambda^{<n}} t_{(\lambda, \lambda')}
\]
for all $\lambda$. To see that $\iota_{n}$ descends to the desired homomorphism
$\ftn{\tilde\iota_{n}}{C^*(\Lambda)}{C^*(\Lambda(n))}$, let $\ftn{q_n}{\Tt
C^*(\Lambda(n))}{C^*(\Lambda(n))}$ denote the quotient map. We check that the family
$S_\lambda := q_n(T_\lambda)$ satisfies~(CK). For $v \in \Lambda^0$ and $m \in \NN^k$,
\[\sum_{\lambda \in v\Lambda^m} S_\lambda S^*_\lambda
    = \sum_{\lambda \in v\Lambda^m} \sum_{\mu,\nu \in s(\lambda)\Lambda^{<n}} s_{(\lambda,\mu)}s^*_{(\lambda,\nu)}.
\]
As above, $s_{(\lambda,\mu)}s^*_{(\lambda,\nu)} \not= 0$ forces $s((\lambda,\mu)) =
s((\lambda,\nu))$, and so $\mu = \nu$. Using this at the first equality and relation~(CK)
in $C^*(\Lambda(n))$ at the second-last equality, we calculate:
\begin{align*}
\sum_{\lambda \in v\Lambda^m} S_\lambda S^*_\lambda
    &= \sum_{\lambda \in v\Lambda^m} \sum_{\lambda' \in s(\lambda)\Lambda^{<n}} s_{(\lambda,\lambda')}s^*_{(\lambda,\lambda')}
    = \sum_{(\lambda,\lambda') \in \Lambda(n)^m, r(\lambda) = v} s_{(\lambda,\lambda')}s^*_{(\lambda,\lambda')} \\
    &= \sum_{\alpha \in v\Lambda^{<n}} \sum_{(\lambda,\lambda') \in \Lambda(n)^m, [\lambda\lambda'] = \alpha} s_{(\lambda,\lambda')}s^*_{(\lambda,\lambda')}
    = \sum_{\alpha \in v\Lambda^{<n}} \sum_{(\lambda,\lambda') \in (v,\alpha)\Lambda(n)^m} s_{(\lambda,\lambda')}s^*_{(\lambda,\lambda')} \\
    &= \sum_{\alpha \in v\Lambda^{<n}} s_{(v,\alpha)}
    = S_v.
\end{align*}
So $\{ S_\lambda \}_{ \lambda \in \Lambda }$ is a Cuntz-Krieger $\Lambda$-family. The
universal property of $C^*(\Lambda)$ now gives a homomorphism
$\ftn{\tilde{\iota}_{n}}{C^*(\Lambda)}{C^*(\Lambda(n))}$ such that
$\tilde{\iota}_{n}(s_\lambda) = S_\lambda = q_n(\iota_{n}(t_\lambda))$. The quotient maps
$\ftn{q}{\Tt C^*(\Lambda)}{C^*(\Lambda)}$ and $\ftn{q_n}{\Tt
C^*(\Lambda(n))}{C^*(\Lambda(n))}$ satisfy $\tilde{\iota}_{n} \circ q = q_n \circ
\iota_{n}$, and the formula for $\tilde{\iota}_n$ in~\eqref{eq:iota formula} follows.
\end{proof}

Now we construct an analogue of the map of
\cite[Proposition~3.1]{RuizSimsTomforde:arXiv:1312.0507}. For $\lambda \in \Lambda$, we
write $T(\lambda)$ for the unique path such that $\lambda = [\lambda]T(\lambda)$. Note
that $d( T( \lambda ) ) = d(\lambda) - [d(\lambda)] \in H_{n}$.

For a set $X$, we write $\Kk_X$ for the $C^*$-algebra of compact operators on
$\ell^2(X)$, with canonical matrix units $\setof{\theta_{x, y}}{x,y \in X}$.

\begin{lem}\label{lem:return homomorphism}
Let $\{s_{\lambda} \}_{\lambda \in \Lambda} \subseteq C^*(\Lambda)$ and $\{s_{(\lambda,
\lambda')} \}_{(\lambda, \lambda') \in \Lambda(n)} \subseteq C^*(\Lambda(n))$ be the
generating Cuntz-Krieger families. There is a homomorphism
$\ftn{j_n}{C^*(\Lambda(n))}{C^*(\Lambda) \otimes \Kk_{\Lambda^{<n}}}$ such that
\[j_n(s_{(\lambda,\lambda')})
    = s_{T(\lambda\lambda')} \otimes \theta_{[\lambda\lambda'], \lambda'}
        \quad\text{for all $(\lambda,\lambda') \in \Lambda(n)$}.
\]
\end{lem}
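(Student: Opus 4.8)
The plan is to define the candidate image family directly and then appeal to the universal property of $C^*(\Lambda(n))$. For each $(\lambda,\lambda')\in\Lambda(n)$ I set
\[
S_{(\lambda,\lambda')} := s_{T(\lambda\lambda')}\otimes\theta_{[\lambda\lambda'],\lambda'}\in C^*(\Lambda)\otimes\Kk_{\Lambda^{<n}},
\]
and I aim to show that $\{S_{(\lambda,\lambda')}\}_{(\lambda,\lambda')\in\Lambda(n)}$ is a Cuntz-Krieger $\Lambda(n)$-family; the universal property of $C^*(\Lambda(n))$ then produces the homomorphism $j_n$ with the asserted values. The guiding observation is that the two matrix-unit indices of $S_{(\lambda,\lambda')}$ are exactly the range $[\lambda\lambda']=r((\lambda,\lambda'))$ and the source $\lambda'=s((\lambda,\lambda'))$ of $(\lambda,\lambda')$ in $\Lambda(n)$, so that products of matrix units automatically track composability in $\Lambda(n)$.

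The three Toeplitz relations reduce to identities for $T(\cdot)$ and $[\cdot]$ furnished by the factorisation property, combined with the corresponding relations in $C^*(\Lambda)$. For (TCK1) I note that a vertex of $\Lambda(n)$ has the form $(r(\mu),\mu)$ with $\mu\in\Lambda^{<n}$, for which $\lambda\lambda'=\mu$ gives $[\lambda\lambda']=\mu$ and $T(\lambda\lambda')=s(\mu)$, so $S_{(r(\mu),\mu)}=s_{s(\mu)}\otimes\theta_{\mu,\mu}$; distinct vertices yield orthogonal matrix units. For (TCK2) I multiply out $S_{(\lambda,\lambda')}S_{(\mu,\mu')}$: the matrix-unit product $\theta_{[\lambda\lambda'],\lambda'}\theta_{[\mu\mu'],\mu'}$ contributes the factor $\delta_{\lambda',[\mu\mu']}$, exactly the composability condition $s((\lambda,\lambda'))=r((\mu,\mu'))$, and on that diagonal the crucial identities are $[\lambda\lambda']=[\lambda\mu\mu']$ and $T(\lambda\mu\mu')=T(\lambda\lambda')\,T(\mu\mu')$, both valid when $\lambda'=[\mu\mu']$. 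These follow by writing $\mu\mu'=\lambda'\,T(\mu\mu')$ with $d(T(\mu\mu'))\in H$ and invoking uniqueness of factorisations; (TCK2) in $C^*(\Lambda)$ then completes the identification with $S_{(\lambda\mu,\mu')}$. Relation (TCK3) is a one-line computation: $S_{(\lambda,\lambda')}^*S_{(\lambda,\lambda')}=s_{T(\lambda\lambda')}^*s_{T(\lambda\lambda')}\otimes\theta_{\lambda',[\lambda\lambda']}\theta_{[\lambda\lambda'],\lambda'}=s_{s(\lambda')}\otimes\theta_{\lambda',\lambda'}$, using $s(T(\lambda\lambda'))=s(\lambda')$, and this is exactly the source projection $S_{(s(\lambda),\lambda')}$.

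The main work, and the step I expect to be most delicate, is the Cuntz-Krieger relation (CK). Fixing a vertex $(r(\mu),\mu)$ with $\mu\in\Lambda^{<n}$ and $m\in\NN^k$, the edges with range $(r(\mu),\mu)$ and degree $m$ are precisely the $(\lambda,\lambda')$ with $d(\lambda)=m$ and $[\lambda\lambda']=\mu$, and for each of these $S_{(\lambda,\lambda')}S_{(\lambda,\lambda')}^*=s_{T(\lambda\lambda')}s_{T(\lambda\lambda')}^*\otimes\theta_{\mu,\mu}$. Hence (CK) collapses to the single identity
\[
s_{s(\mu)}=\sum_{(\lambda,\lambda')\in(r(\mu),\mu)\Lambda(n)^m} s_{T(\lambda\lambda')}s_{T(\lambda\lambda')}^*
\]
in $C^*(\Lambda)$. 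To evaluate the sum I use the explicit description of $(r(\mu),\mu)\Lambda(n)^m$ obtained when proving that $\Lambda(n)$ is row-finite: setting $p:=m+[d(\mu)-m]-d(\mu)\in H$, this edge set is in bijection with $s(\mu)\Lambda^p$ via $\nu\mapsto\big((\mu\nu)(0,m),(\mu\nu)(m,p+d(\mu))\big)$. Under this bijection $\lambda\lambda'=\mu\nu$, and since $d(\nu)\in H$ we have $[\mu\nu]=\mu$, whence $T(\lambda\lambda')=T(\mu\nu)=\nu$ by cancellation. The sum therefore becomes $\sum_{\nu\in s(\mu)\Lambda^p}s_\nu s_\nu^*$, which equals $s_{s(\mu)}$ by relation (CK) for $C^*(\Lambda)$ at the vertex $s(\mu)$ and degree $p$, as required.

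Throughout, the one thing to watch is that every concatenation that appears---notably the composite $T(\lambda\lambda')\,T(\mu\mu')$ in (TCK2) and each factorisation of $\mu\nu$ in (CK)---is a genuine path in $\Lambda$, i.e.\ that the relevant sources and ranges match; each such check is routine once one uses the identifications $s((\lambda,\lambda'))=\lambda'$ and $r((\lambda,\lambda'))=[\lambda\lambda']$ together with the fact that $T(\kappa)$ and $[\kappa]$ inherit the source and range of $\kappa$. With $\{S_{(\lambda,\lambda')}\}$ shown to satisfy (TCK1), (TCK2), (TCK3) and (CK), the universal property of $C^*(\Lambda(n))$ yields $j_n$ and completes the proof.
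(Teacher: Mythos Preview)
Your proposal is correct and follows essentially the same approach as the paper: you define the same candidate family $S_{(\lambda,\lambda')}$, verify (TCK1)--(TCK3) via the identities $[\lambda\mu\mu']=[\lambda\lambda']$ and $T(\lambda\mu\mu')=T(\lambda\lambda')T(\mu\mu')$ when $\lambda'=[\mu\mu']$, and establish (CK) by parametrising $(r(\mu),\mu)\Lambda(n)^m$ by $s(\mu)\Lambda^p$ with $p=m+[d(\mu)-m]-d(\mu)\in H$, exactly as in the paper. The only cosmetic difference is that the paper writes out the intermediate factorisation steps for (TCK2) and (CK) slightly more explicitly, whereas you summarise them and point back to the row-finiteness proof for the bijection; the arguments are the same.
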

\begin{proof}
We just have to check the Cuntz-Krieger relations for the elements
$S_{(\lambda,\lambda')} := s_{T(\lambda\lambda')} \otimes \theta_{[\lambda\lambda'],
\lambda'}$. For $\lambda \in \Lambda^{<n}$, we have $T(\lambda) = s(\lambda)$ and
$[\lambda] = \lambda$. Thus $\{ S_{(r(\lambda),\lambda)} = s_{s(\lambda)}
\otimes \theta_{\lambda, \lambda} \}_{ ( r(\lambda), \lambda) \in \Lambda(n)^{0}}$ is a collection of mutually orthogonal projections.

Let $(\lambda, \lambda')$ and $( \mu, \mu' )$ be elements in $\Lambda(n)$.  Then
\begin{align*}S_{(\lambda,\lambda')}S_{(\mu,\mu')}
    &= (s_{T(\lambda\lambda')} \otimes \theta_{[\lambda\lambda'], \lambda'})
        (s_{T(\mu\mu')} \otimes \theta_{[\mu\mu'], \mu'}) \\
    &= \delta_{ \lambda', [ \mu \mu' ] } s_{T(\lambda\lambda')}s_{T(\mu\mu')} \otimes \theta_{[\lambda\lambda'], \mu'}\\
    &= \delta_{ s( (\lambda, \lambda') ), r( (\mu, \mu' )) } s_{T(\lambda\lambda')}s_{T(\mu\mu')} \otimes \theta_{[\lambda\lambda'], \mu'}.
\end{align*}
Suppose that $\lambda' = s((\lambda,\lambda')) = r((\mu,\mu'))  = [\mu\mu']$.  Then
$r(T(\mu\mu')) = s([\mu\mu']) = s(\lambda') = s(T(\lambda\lambda'))$. Moreover,
$\lambda\lambda'T(\mu\mu') = \lambda\mu\mu'$ because $[\mu\mu'] = \lambda'$. So
$T(\lambda\lambda'T(\mu\mu')) = T(\lambda\mu\mu') = T(\lambda\lambda')T(\mu\mu')$. Since
$d(T(\mu\mu')) \in H$, we also have $[\lambda\lambda'] = [\lambda\lambda'T(\mu\mu')]$,
and hence $[\lambda\mu\mu'] = [\lambda\lambda'T(\mu\mu')] = [\lambda\lambda']$. Putting
these two observations together, we deduce that
\begin{align*}
S_{(\lambda,\lambda')}S_{(\mu,\mu')}
    &= \delta_{ s( (\lambda, \lambda') ), r( (\mu, \mu' )) } s_{T(\lambda\mu\mu')} \otimes \theta_{[\lambda\mu\mu'], \mu'} \\
    &= \delta_{ s( (\lambda, \lambda') ), r( (\mu, \mu' )) } S_{(\lambda\mu,\mu')} \\
    &= \delta_{ s( (\lambda, \lambda') ), r( (\mu, \mu' )) } S_{(\lambda,\lambda')(\mu,\mu')},
\end{align*}
establishing~(TCK2).

For~(TCK3), fix $(\lambda,\lambda') \in \Lambda(n)$. We have
\begin{align*}
S^*_{(\lambda,\lambda')} S_{(\lambda,\lambda')}
    &= \big(s^*_{T(\lambda\lambda')} \otimes \theta_{\lambda', [\lambda\lambda']}\big)
            \big(s_{T(\lambda\lambda')} \otimes \theta_{[\lambda\lambda'], \lambda'}\big)\\
    &= s_{s(T(\lambda\lambda'))} \otimes \theta_{\lambda', \lambda'}
    = s_{s(\lambda')} \otimes \theta_{\lambda', \lambda'}
    = S_{(r(\lambda'),\lambda')}
    = S_{s((\lambda,\lambda'))}.
\end{align*}

Finally for~(CK), fix $(v,\lambda) \in \Lambda(n)^0$ and $m \in \NN^k$. Then
\begin{align*}
\sum_{(\mu,\mu') \in (v,\lambda)\Lambda(n)^m} S_{(\mu,\mu')} S^*_{(\mu,\mu')}
    &= \sum_{\mu \in v\Lambda^m, \mu' \in s(\mu)\Lambda^{<n}, [\mu\mu'] = \lambda}
        S_{(\mu,\mu')} S^*_{(\mu,\mu')} \\
    &= \sum_{\mu \in v\Lambda^m, \mu' \in s(\mu)\Lambda^{<n}, [\mu\mu'] = \lambda}
        s_{T(\mu\mu')} s^*_{T(\mu\mu')} \otimes \theta_{[\mu\mu'], [\mu\mu']}\\
    &= \sum_{\mu \in v\Lambda^m, \mu' \in s(\mu)\Lambda^{<n}, [\mu\mu'] = \lambda}
        s_{T(\mu\mu')} s^*_{T(\mu\mu')} \otimes \theta_{\lambda, \lambda}.
\end{align*}
Let $p := m + [d(\lambda) - m]$. Then $p \ge 0$ and $[p] = d(\lambda)$, so $p \ge
d(\lambda)$. The factorisation property implies that $\setof{\mu\mu'}{\mu \in v\Lambda^m,
\mu' \in s(\mu)\Lambda^{<n}, [\mu\mu'] = \lambda } = \setof{\lambda\nu}{ \nu \in
s(\lambda)\Lambda^{p - d(\lambda)} }$. For $\nu \in s(\lambda)\Lambda^{p - d(\lambda)}$,
we have $T(\lambda\nu) = \nu$. We deduce that
\[\sum_{(\mu,\mu') \in (v,\lambda)\Lambda(n)^m} S_{(\mu,\mu')} S^*_{(\mu,\mu')}
    = \sum_{\nu \in s(\lambda)\Lambda^{p - d(\lambda)}} s_\nu s^*_\nu \otimes \theta_{\lambda,\lambda}
    = s_{s(\lambda)} \otimes \theta_{\lambda,\lambda}
    = S_{(v,\lambda)}
\]
as required. Now the universal property of $C^*(\Lambda(n))$ gives the desired
homomorphism $j_n$.
\end{proof}

\section{Cartesian products, $1$-graphs, and the Kribs-Solel construction}\label{sec:CP1-graphKS}

Kumjian and Pask show that a cartesian product $\Lambda \times \Gamma$ of higher-rank
graphs is itself a higher-rank graph with $C^*(\Lambda \times \Gamma) \cong C^*(\Lambda)
\otimes C^*(\Gamma)$ (\cite[Corollary~3.5(iv)]{KumjianPask:NYJM00}). In this section we
show that the construction of the preceding section is compatible with the
cartesian-product operation, and also that the construction of \cite{KribsSolel:JAMS07}
and that of the preceding section are compatible via the passage from directed graphs to
$1$-graphs. We will use these results to compute the map $\ftn{K_*(j_n \circ
\tilde\iota_n)}{K_*(C^*(\Lambda))}{K_*(C^*(\Lambda) \otimes \Kk_{\Lambda^{<n}} )} \cong
K_*(C^*(\Lambda))$ for a particular class of $2$-graphs $\Lambda$.

For $i = 1,2$, let $(\Lambda_{i}, d_{i})$ be a $k_{i}$-graph.  The product category
$(\Lambda_{1} \times \Lambda_{2}, d_{1} \times d_{2})$ is a $(k_{1} + k_{2})$-graph with
degree map $(d_{1} \times d_{2})((\mu_{1}, \mu_{2})) = (d_{1} (\mu_{1}), d (\mu_{2}))$.
If each $(\Lambda_{i}, d_{i})$ is row-finite with no sources, then so is $(\Lambda_{1}
\times \Lambda_{2}, d_{1} \times d_{2})$. By
\cite[Corollary~3.5(iv)]{KumjianPask:NYJM00}, there exists an isomorphism
$\ftn{\Theta_{\Lambda_{1} \times \Lambda_{2}}}{C^{*} (\Lambda_{1} \times
\Lambda_{2})}{C^{*} (\Lambda_{1}) \otimes C^{*} (\Lambda_{2})}$ such that
$\Theta_{\Lambda_{1} \times \Lambda_{2}} (s_{(\mu_{1}, \mu_{2})}) = s_{\mu_{1}} \otimes
s_{\mu_{2}}$.

\begin{rmk}\label{rmk:pathcat}
Let $(\Lambda_{i}, d_{i})$ be a row-finite $k_{i}$-graph with no sources for $i = 1,2$.
For $n_1 \in \NN^{k_1}$ and $n_2 \in \NN^{k_2}$, the functor that sends $((\lambda_{1},
\lambda_{2}), (\lambda_{1}', \lambda_{2}')) \in (\Lambda_{1} \times \Lambda_{2})((n_{1},
n_{2}))$ to $((\lambda_{1}, \lambda_{1}'), (\lambda_{2}, \lambda_{2}')) \in  \Lambda_{1}
(n_{1}) \times \Lambda_{2} (n_{2})$ is an isomorphism of $(k_{1} + k_{2})$-graphs. So
there is an isomorphism $C^{*} ((\Lambda_{1} \times \Lambda_{2})((n_{1}, n_{2})) ) \cong
C^{*}(\Lambda_{1} (n_{1}) \times \Lambda_{2} (n_{2}) )$ sending $s_{((\lambda_{1},
\lambda_{2}), (\lambda_{1}', \lambda_{2}'))}$ to $s_{((\lambda_{1}, \lambda_{1}'),
(\lambda_{2}, \lambda_{2}'))}$.
\end{rmk}

We show that the homomorphism in Lemma~\ref{l:homomorphism KribSolel} is compatible with
the isomorphism $C^{*}((\Lambda_{1} \times \Lambda_{2})((n_{1}, n_{2})) ) \cong
C^{*}(\Lambda_{1} (n_{1}) \times \Lambda_{2} (n_{2}))$ just described.

\begin{lem}\label{l:prodgraph}
For $i = 1,2$, let $(\Lambda_{i}, d_{i})$ be a row-finite $k_{i}$-graph with no sources.
For $(n_{1}, n_{2})  \in\NN^{k_{1}} \times \NN^{k_{2}}$, we have $(\tilde{\iota}_{n_{1}} \otimes \tilde{\iota}_{n_{2}}) \circ \Theta_{\Lambda_{1} \times \Lambda_{2}}
    = (\Theta_{\Lambda_{1}(n_{1}) \times \Lambda_{2}(n_{2})}) \circ \tilde{\iota}_{(n_{1}, n_{2})}$.
\end{lem}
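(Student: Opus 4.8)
The plan is to verify the claimed equality of homomorphisms $C^*(\Lambda_1 \times \Lambda_2) \to C^*(\Lambda_1(n_1)) \otimes C^*(\Lambda_2(n_2))$ by checking that both sides agree on the generating Cuntz-Krieger family $\{s_{(\mu_1,\mu_2)}\}$ of $C^*(\Lambda_1 \times \Lambda_2)$; since these elements generate the algebra and both maps are homomorphisms, equality on generators suffices. So first I would fix $(\mu_1,\mu_2) \in \Lambda_1 \times \Lambda_2$ and compute the image of $s_{(\mu_1,\mu_2)}$ under each composite, using the explicit formulas from \eqref{eq:iota formula}, from Remark~\ref{rmk:pathcat}, and from the Kumjian--Pask isomorphism $\Theta$.

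For the right-hand side, I would start by applying $\tilde\iota_{(n_1,n_2)}$ to $s_{(\mu_1,\mu_2)}$, which by \eqref{eq:iota formula} yields $\sum s_{((\mu_1,\mu_2),(\lambda_1',\lambda_2'))}$ summed over $(\lambda_1',\lambda_2') \in s((\mu_1,\mu_2))(\Lambda_1\times\Lambda_2)^{<(n_1,n_2)}$. The key combinatorial point is that the condition $d((\lambda_1',\lambda_2')) < (n_1,n_2)$ factors as $d(\lambda_1') < n_1$ and $d(\lambda_2') < n_2$, so this index set is exactly the product $s(\mu_1)\Lambda_1^{<n_1} \times s(\mu_2)\Lambda_2^{<n_2}$, letting the single sum split as a double sum. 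Then applying $\Theta_{\Lambda_1(n_1)\times\Lambda_2(n_2)}$ together with the isomorphism of Remark~\ref{rmk:pathcat}, each generator $s_{((\mu_1,\mu_2),(\lambda_1',\lambda_2'))}$ is carried to $s_{(\mu_1,\lambda_1')} \otimes s_{(\mu_2,\lambda_2')}$.

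For the left-hand side, applying $\Theta_{\Lambda_1\times\Lambda_2}$ first sends $s_{(\mu_1,\mu_2)}$ to $s_{\mu_1}\otimes s_{\mu_2}$, and then $\tilde\iota_{n_1}\otimes\tilde\iota_{n_2}$ sends this, again via \eqref{eq:iota formula} applied in each factor, to $\big(\sum_{\lambda_1'} s_{(\mu_1,\lambda_1')}\big)\otimes\big(\sum_{\lambda_2'} s_{(\mu_2,\lambda_2')}\big)$, which distributes over the tensor product into the same double sum $\sum_{\lambda_1',\lambda_2'} s_{(\mu_1,\lambda_1')}\otimes s_{(\mu_2,\lambda_2')}$ obtained for the right-hand side. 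Comparing the two expressions gives the asserted equality on generators, hence on all of $C^*(\Lambda_1\times\Lambda_2)$.

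This is essentially a bookkeeping computation, and I do not expect a genuine obstacle; the one step requiring care is the identification of the index set, namely that the factorisation $(\Lambda_1\times\Lambda_2)^{<(n_1,n_2)} \cong \Lambda_1^{<n_1}\times\Lambda_2^{<n_2}$ is compatible with the source maps and with the identification of vertices used implicitly in Remark~\ref{rmk:pathcat}. I would confirm that under the isomorphism of that remark the relevant source vertices match up, so that the double sums on the two sides range over precisely the same index set and no terms are missed or duplicated. Once that is settled, the two computations meet in the middle and the proof is complete.
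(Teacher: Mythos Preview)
Your proposal is correct and follows essentially the same approach as the paper: both verify the identity on the canonical generators $s_{(\mu_1,\mu_2)}$ by computing each side explicitly using \eqref{eq:iota formula} and the isomorphism of Remark~\ref{rmk:pathcat}, and both isolate the factorisation $s((\mu_1,\mu_2))(\Lambda_1\times\Lambda_2)^{<(n_1,n_2)} = s(\mu_1)\Lambda_1^{<n_1}\times s(\mu_2)\Lambda_2^{<n_2}$ as the only nontrivial step.
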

\begin{proof}
Let $(\mu_{1}, \mu_{2}) \in  \Lambda_{1} \times \Lambda_{2}$.  Then
\[(\tilde{\iota}_{n_{1}} \otimes \tilde{\iota}_{n_{2}}) \circ \Theta_{\Lambda_{1} \times \Lambda_{2}} (s_{(\mu_{1}, \mu_{2})})
    = \tilde{\iota}_{n_{1}} \otimes \tilde{\iota}_{n_{2}}  (s_{\mu_{1}} \otimes s_{\mu_{2}})
    = \sum_{\substack{\nu \in s(\mu_{1}) \Lambda_{1}^{<n_{1}}  \\ \nu' \in s(\mu_{2}) \Lambda_{2}^{<n_{2}}}} s_{(\mu_{1}, \nu)} \otimes s_{(\mu_{2}, \nu')}.
\]
Identifying $((\Lambda_{1} \times \Lambda_{2})((n_{1}, n_{2})), d_{1} \times d_{2})$ with
$(\Lambda_{1} (n_{1}), d_{1}) \times (\Lambda_{2} (n_{2}) , d_{2})$ as in
Remark~\ref{rmk:pathcat}, we have
\begin{align*}
\Theta_{\Lambda_{1}(n_{1}) \times \Lambda_{2}(n_{2})} \circ \tilde{\iota}_{(n_{1}, n_{2})} (s_{(\mu_{1}, \mu_{2})})
    &= \Theta_{\Lambda_{1}(n_{1}) \times \Lambda_{2}(n_{2})}
         \Big(\sum_{(\alpha, \beta) \in s((\mu_{1}, \mu_{2})) (\Lambda_{1} \times \Lambda_{2})^{< (n_{1}, n_{2})}}
                    s_{((\mu_{1}, \alpha), (\mu_{2}, \beta))}
         \Big) \\
	&= \sum_{\substack{\alpha \in s(\mu_{1}) \Lambda^{<n_{1}} \\ \beta \in s(\mu_{2}) \Lambda^{< n_{2}}}}
            \Theta_{\Lambda_{1}(n_{1}) \times \Lambda_{2}(n_{2})} (s_{((\mu_{1}, \alpha), (\mu_{2}, \beta))}) \\
	&= \sum_{\substack{\alpha \in s(\mu_{1}) \Lambda^{<n_{1}} \\ \beta \in s(\mu_{2}) \Lambda^{< n_{2}}}}
            s_{(\mu_{1}, \alpha)} \otimes s_{(\mu_{2}, \beta)}.
\end{align*}
Therefore, $\tilde{\iota}_{n_{1}} \otimes \tilde{\iota}_{n_{2}} \circ \Theta_{\Lambda_{1}
\times \Lambda_{2}} = \Theta_{\Lambda_{1}(n_{1}) \times \Lambda_{2}(n_{2})} \circ
\tilde{\iota}_{(n_{1}, n_{2})}$.
\end{proof}

We will need to apply Lemma~\ref{l:prodgraph} where $\Lambda_1$ and $\Lambda_2$ are the
$1$-graphs associated to directed graphs $E$ and $F$, and relate this to
\cite[Lemma~2.5]{RuizSimsTomforde:arXiv:1312.0507} for $C^*(E)$ and $C^*(F)$. We
therefore find ourselves in an unfortunate clash of conventions. The convention used in
\cite{RuizSimsTomforde:arXiv:1312.0507} is that of \cite{KPRR, KPR} where,
for historical reasons, the partial
isometries in a Cuntz-Krieger family point in the opposite direction to the edges in the
graph. This is at odds with the $k$-graph convention where the partial isometries go in
the same direction as the morphisms in the $k$-graph. To deal with this, we take the
approach that the range and source maps are interchanged when passing from a directed
graph $E$ to its path category $E^*$.

We recall the definition of the Toeplitz algebra $\Tt C^*(E)$ and the Cuntz-Krieger
algebra $C^*(E)$ of a directed graph $E$ as used in
\cite{RuizSimsTomforde:arXiv:1312.0507}. Let $E = (E^{0}, E^{1}, r_{E}, s_{E})$ be a
row-finite directed graph with no sinks (so $0 < |\setof{e \in E^{1}}{s_{E} (e) = v}| <
\infty$  for $v \in E^{0}$). Then $\Tt C^*(E)$ is the universal $C^*$-algebra generated
by mutually orthogonal projections $\{q_v\}_{v \in E^0 }$ and elements $\{t_e\}_{ e
\in E^1 }$ such that
\begin{itemize}
\item[1)] $t^*_e t_e = q_{r_{E}(e)}$ for all $e \in E^1$, and
\item[2)] $q_v \ge \sum_{e \in E^1, s_{E}(e) = v} t_e t^*_e$ for each $v \in E^0$.
\end{itemize}
The graph $C^{*}$-algebra $C^*(E)$ is the universal $C^*$-algebra generated by mutually
orthogonal projections $\{p_v\}_{v \in E^0 }$ and elements $\{s_e\}_{ e \in E^1 }$
such that
\begin{itemize}
\item[3)] $s^*_e s_e = p_{r_{E}(e)}$ for all $e \in E^1$, and
\item[4)] $p_v = \sum_{e \in E^1, s_{E}(e)= v} t_e t^*_e$ for each $v \in E^0$.
\end{itemize}

We recall the construction described in \cite[Section~4]{KribsSolel:JAMS07}.  Given $m
\in \NN$ and a directed graph $E = (E^{0}, E^{1}, r_{E}, s_{E})$, we define $E(m)$ to be
the directed graph with
\begin{align*}
E(m)^0 &= E^{<m},&
E(m)^1 &= \setof{(e, \mu)}{ e \in E^1, \mu \in E^{<m}, r_{E}(e) = s_{E}(\mu) },\\
r_{E(m)}((e,\mu)) &= \mu,&
s_{E(m)}((e,\mu)) &= \begin{cases}
		e\mu &\text{if $|\mu| < m-1$}\\
		s_{E}(e) &\text{if $|\mu| = m-1$.}
	\end{cases}
\end{align*}

The next lemma is due to James Rout, and will appear in his PhD thesis. We thank James
for providing us with the details (a proof appears in
\cite[Lemma~2.5]{RuizSimsTomforde:arXiv:1312.0507}).

\begin{lem}[Rout]\label{l:Rout}
Let $E$ be a row-finite directed graph and take $m \ge 1$.  There is an injective homomorphism $\ftn{\iota_{m, E}}{\Tt
C^*(E)}{\Tt C^*(E(m))}$ such that
\[\iota_{m, E}(q_v) = \sum_{\substack{\mu \in E^{<m} \\ s_{E} (\mu) = v}} q^m_\mu\quad\text{and}\quad
\iota_{m, E}(t_e) = \sum_{(e,\mu)  \in E(m)^{1}} t^m_{(e,\mu)},
\]
where $\{q^{m}_{\mu} , t_{(e, \nu)}^{m} \}_{ \mu \in E(m)^{0} , (e, \nu ) \in E(m)^{1}}$
are the universal generators of $\Tt C^{*} (E)$.  The map $\iota_{m, E}$ descends to an
injective homomorphism $\ftn{\tilde{\iota}_{m, E}}{C^*(E)}{C^*(E(m))}$.
\end{lem}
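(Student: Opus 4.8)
The plan is to build, in $\Tt C^*(E(m))$, the elements
\[
Q_v := \sum_{\mu \in E^{<m},\, s_E(\mu) = v} q^m_\mu
\qquad\text{and}\qquad
T_e := \sum_{(e,\mu) \in E(m)^1} t^m_{(e,\mu)},
\]
to check that $\{Q_v, T_e\}$ is a Toeplitz--Cuntz--Krieger $E$-family so that the universal property of $\Tt C^*(E)$ yields $\ftn{\iota_{m,E}}{\Tt C^*(E)}{\Tt C^*(E(m))}$, to establish injectivity by a gauge-invariant uniqueness argument, and finally to descend to $C^*(E)$ by pushing the family through the quotient map $\ftn{q_{E(m)}}{\Tt C^*(E(m))}{C^*(E(m))}$ and verifying the Cuntz--Krieger relation.

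First I would verify the Toeplitz relations. Relation~1) is immediate from $(t^m_{(e,\mu)})^* t^m_{(e,\mu')} = \delta_{\mu,\mu'} q^m_\mu$, which gives $T_e^* T_e = \sum_{(e,\mu) \in E(m)^1} q^m_\mu = Q_{r_E(e)}$. For relation~2) the key observation is that $t^m_{(e,\mu)} (t^m_{(e,\mu')})^* = 0$ whenever $\mu \ne \mu'$, because these partial isometries have orthogonal range projections $q^m_\mu$ and $q^m_{\mu'}$; hence $T_e T_e^* = \sum_{(e,\mu) \in E(m)^1} t^m_{(e,\mu)} (t^m_{(e,\mu)})^*$ is a genuine projection. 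Grouping the edges $(e,\mu)$ with $s_E(e) = v$ by their source $s_{E(m)}((e,\mu)) \in E^{<m}$, one sees a clean dichotomy: each vertex $\nu \in E^{<m}$ with $|\nu| \ge 1$ and $s_E(\nu) = v$ is the source of exactly one such edge (namely $e = \nu(0,1)$, $\mu = \nu(1,|\nu|)$), while the edges with $|\mu| = m-1$ all have source the length-zero vertex $v$. Applying relation~2) in $\Tt C^*(E(m))$ within each source-group then yields $\sum_{s_E(e)=v} T_e T_e^* \le Q_v$.

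The crux — and the step I expect to be the main obstacle — is injectivity. Both $\Tt C^*(E)$ and $\Tt C^*(E(m))$ carry gauge actions of $\TT$ fixing the vertex projections and scaling each generating partial isometry by $z$; since each $T_e$ is a sum of generators of gauge-degree one, $\iota_{m,E}$ is equivariant. By the gauge-invariant uniqueness theorem for Toeplitz algebras of graphs, such an equivariant homomorphism is injective provided it is faithful on vertices and maps each gap projection $P_v := q_v - \sum_{s_E(e)=v} t_e t_e^*$ to a nonzero element. Faithfulness on vertices is clear, since $\iota_{m,E}(q_v) = Q_v$ is a nonzero sum of orthogonal projections. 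For the gap projections, the source-grouping analysis refines to the identity
\[
\iota_{m,E}(P_v) = \sum_{\nu \in E^{<m},\, s_E(\nu) = v} P^m_\nu,
\qquad
P^m_\nu := q^m_\nu - \sum_{f \in E(m)^1,\, s_{E(m)}(f) = \nu} t^m_f (t^m_f)^*,
\]
and since the $P^m_\nu$ are nonzero and mutually orthogonal the image is nonzero. Pinning this identity down exactly — in particular checking that the edges into $\nu$ listed by the grouping are \emph{all} of the $E(m)$-edges into $\nu$ — is the delicate point, and it is precisely where the definition of the source map of $E(m)$ must be used carefully.

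Finally I would descend to the Cuntz--Krieger algebras. Writing $S_v := q_{E(m)}(Q_v)$ and $S_e := q_{E(m)}(T_e)$, the inequality computed for relation~2) becomes an equality once relation~4) is imposed in $C^*(E(m))$: within each source-group the bound is then saturated, so that $\sum_{s_E(e)=v} S_e S_e^* = \sum_{\nu,\, s_E(\nu)=v} p^m_\nu = S_v$. Hence $\{S_v, S_e\}$ is a Cuntz--Krieger $E$-family, and the universal property of $C^*(E)$ produces $\ftn{\tilde\iota_{m,E}}{C^*(E)}{C^*(E(m))}$ with $\tilde\iota_{m,E} \circ q_E = q_{E(m)} \circ \iota_{m,E}$. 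Its injectivity follows from the unconditional gauge-invariant uniqueness theorem for graph $C^*$-algebras: $\tilde\iota_{m,E}$ is gauge-equivariant and $\tilde\iota_{m,E}(p_v) = S_v \ne 0$, so no gap-projection hypothesis is needed here.
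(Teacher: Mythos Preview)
The paper does not give its own proof of this lemma; it attributes the result to Rout's thesis and cites \cite[Lemma~2.5]{RuizSimsTomforde:arXiv:1312.0507}. Your argument is correct and is the expected one: check the Toeplitz relations for $\{Q_v,T_e\}$ to obtain $\iota_{m,E}$ from the universal property, compute $\iota_{m,E}(P_v)=\sum_{\nu}P^m_\nu$ via the source-grouping you describe, invoke the gauge-invariant uniqueness theorem for Toeplitz graph algebras, and then descend to the Cuntz--Krieger level and apply the ordinary graph-algebra uniqueness theorem. The point you flag as delicate --- that your grouping by $s_{E(m)}$-source accounts for \emph{all} $E(m)$-edges into each $\nu$ --- does go through exactly as stated: for $|\nu|\ge 1$ the unique $E(m)$-edge with $E(m)$-source $\nu$ arises from the factorisation $\nu=\nu(0,1)\,\nu(1,|\nu|)$ and automatically satisfies $s_E(\nu(0,1))=s_E(\nu)=v$, while for the length-zero vertex $\nu=v$ the $E(m)$-edges with source $v$ are precisely the $|\mu|=m-1$ terms. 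One small caveat: your argument that $S_v\ne 0$ in $C^*(E(m))$ tacitly uses that $E(m)$ has no sinks, which in turn needs $E$ to have no sinks --- an assumption present in the paper's standing conventions but not restated in the lemma.
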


We describe canonical isomorphisms $C^{*}(E) \cong C^{*} (E^{*})$ and $C^{*} (E(m)) \cong
C^{*} (E^{*} (m))$ and show that these isomorphisms intertwine the homomorphism
$\tilde{\iota}_{m, E}$ of Lemma~\ref{l:Rout} and the homomorphism $\tilde{\iota}_{m}$ of
Lemma~\ref{l:homomorphism KribSolel}.

\begin{rmk}\label{rmk:C*(E)congC*(E*)}
Let $E$ be a row-finite directed graph with no sinks, and let $E^*$ be its path-category regarded
as a row-finite $1$-graph with no sources. Let $\{p_v, s_e\}_{v \in E^0, e \in E^1}$ be
the universal generators of $C^*(E)$ and let $\{S_\lambda \}_{ \lambda \in E^* }$ be the
universal generators of $C^*(E^*)$. By~\cite[Examples~1.7]{KumjianPask:NYJM00}, there is
an isomorphism $\ftn{\psi_E}{C^*(E)}{C^*(E^*)}$ such that $\psi_E(p_v) = S_v$ and
$\psi_E(s_e) = S_e$ for all $v \in E^0$ and $e \in E^1$.
\end{rmk}

\begin{lem}\label{lem:E*(m)congE(m)*}
Let $E$ be a row-finite directed graph with no sinks, and let $E^*$ be its path-category
regarded as a row-finite $1$-graph with no sources. There is an isomorphism of $1$-graphs
$E(m)^* \cong E^*(m)$ extending the identity map on $(E(m)^*)^1 = E^*(m)^1$. There is an
isomorphism $C^*(E(m)^*) \cong C^*(E^*(m))$ satisfying $s_{(e,\mu)} \mapsto s_{(e,\mu)}$ for
$(e,\mu) \in E^*(m)^1 = (E(m)^*)^1$.
\end{lem}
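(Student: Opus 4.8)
The plan is to use the fact that both $E(m)^*$ and $E^*(m)$ are $1$-graphs, hence are determined up to isomorphism by their skeletons $(\Lambda^0, \Lambda^1, r, s)$; so it suffices to check that the identity map on vertices and edges intertwines the range and source maps of the two skeletons. First I would record the vertex and edge sets. Both $1$-graphs have vertex set $E^{<m}$: for $E(m)^*$ this is the vertex set $E(m)^0 = E^{<m}$ of the directed graph $E(m)$, while for $E^*(m)$ it is $(E^*)^{<m} = E^{<m}$ under the identification $\Lambda(m)^0 \cong \Lambda^{<m}$ of Section~\ref{sec:k-graphKS}. Both have edge set $\setof{(e,\mu)}{e \in E^1,\ \mu \in E^{<m},\ r_E(e) = s_E(\mu)}$: for $E^*(m)$ the edges are the elements $(\lambda,\lambda')$ with $d(\lambda)=1$, so $\lambda = e \in E^1$ and $\lambda' = \mu$, and the composability condition $s_{E^*}(\lambda) = r_{E^*}(\lambda')$ unravels, via the convention swap $r_{E^*} = s_E$ and $s_{E^*} = r_E$, to exactly $r_E(e) = s_E(\mu)$.

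Next I would compare the source and range maps on an edge $(e,\mu)$, keeping in mind that passing from the directed graph $E(m)$ to its path category $E(m)^*$ interchanges range and source. For the source: in $E(m)^*$ we have $s_{E(m)^*}((e,\mu)) = r_{E(m)}((e,\mu)) = \mu$, while in $E^*(m)$ the Kribs-Solel source is $s_{E^*(m)}((e,\mu)) = \mu$, so these agree. For the range: in $E(m)^*$ we have $r_{E(m)^*}((e,\mu)) = s_{E(m)}((e,\mu))$, which equals $e\mu$ if $|\mu| < m-1$ and $s_E(e)$ if $|\mu| = m-1$; in $E^*(m)$ the Kribs-Solel range is $r_{E^*(m)}((e,\mu)) = [e\mu]$, where $e\mu$ is the concatenation in $E$ (equivalently the composition in $E^*$) and $[\,\cdot\,]$ truncates to length $|e\mu| \bmod m = (1+|\mu|)\bmod m$. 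The one real computation is the case split: if $|\mu| < m-1$ then $1+|\mu| < m$, so $[e\mu] = e\mu$; and if $|\mu| = m-1$ then $1+|\mu| \equiv 0 \pmod m$, so $[e\mu] = (e\mu)(0,0) = r_{E^*}(e\mu) = s_E(e\mu) = s_E(e)$. Hence the two range maps agree as well.

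Having matched vertices, edges, sources and ranges, I would conclude that the identity map on $E^{<m}$ and on the common edge set is an isomorphism of the skeletons, and therefore (since a $1$-graph is the path category of its skeleton, as in \cite[Examples~1.7]{KumjianPask:NYJM00}) it extends uniquely to a degree-preserving isomorphism of $1$-graphs $E(m)^* \cong E^*(m)$ fixing each edge $(e,\mu)$. Finally, functoriality of the assignment $\Lambda \mapsto C^*(\Lambda)$ — or, equivalently, the universal properties of the two Cuntz-Krieger algebras — promotes this to an isomorphism $C^*(E(m)^*) \cong C^*(E^*(m))$ carrying $s_{(e,\mu)}$ to $s_{(e,\mu)}$, as claimed. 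The only genuine subtlety, and the step I would be most careful about, is the bookkeeping forced by the clash of conventions: the interchange of $r$ and $s$ between $E(m)$ and $E(m)^*$ must be applied consistently, and it is precisely this swap that makes the endpoint $s_E(e)$ appearing in the definition of $E(m)$ when $|\mu| = m-1$ line up with the truncation $[e\mu]$ in the definition of $E^*(m)$.
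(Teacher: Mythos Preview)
Your proposal is correct and follows essentially the same route as the paper: both arguments invoke the fact (from \cite[Example~1.3]{KumjianPask:NYJM00}) that a $1$-graph is determined by its edges together with their range and source maps, and then check that the identity on $(E(m)^*)^1 = E^*(m)^1$ intertwines these. The paper simply asserts the intertwining without computation, whereas you carry out the case split $|\mu| < m-1$ versus $|\mu| = m-1$ explicitly; your bookkeeping with the convention swap $r_{E^*} = s_E$, $s_{E^*} = r_E$ is accurate and is exactly the check the paper is leaving to the reader.
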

\begin{proof}
Example~1.3 of \cite{KumjianPask:NYJM00} says that $1$-graphs $\Lambda$ and $\Gamma$ are
isomorphic if and only if there is a bijection $\Lambda^1 \to \Gamma^1$ that intertwines
range maps and source maps. Since $(e,\mu) \mapsto ( e, \mu )$ is such a bijection between $(E(m)^*)^1$ and
$E^*(m)^1$, there is an isomorphism $E^*(m) \cong E(m)^*$ as claimed. Since isomorphic
1-graphs have canonically isomorphic $C^*$-algebras, the result follows.
\end{proof}

\begin{lem}\label{l:equivalence construction}
Let $E$ be a row-finite directed graph with no sinks, and fix $m \in \NN \setminus\{0\}$.
Identify $C^*(E(m)^*)$ with $C^*(E^*(m))$ using Lemma~\ref{lem:E*(m)congE(m)*}. Then the
isomorphisms $\ftn{\psi_{E}}{C^{*} (E)}{C^{*} (E^{*})}$ and $\ftn{\psi_{E(m)}}{C^{*}
(E(m))}{C^{*} (E^{*}(m))}$ of Remark~\ref{rmk:C*(E)congC*(E*)} satisfy $\tilde{\iota}_{m}
\circ \psi_{E} = \psi_{E(m)} \circ \tilde{\iota}_{m,E}$.
\end{lem}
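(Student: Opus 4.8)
The plan is to verify the identity $\tilde\iota_m \circ \psi_E = \psi_{E(m)} \circ \tilde\iota_{m,E}$ on the generators of $C^*(E)$, namely the vertex projections $p_v$ ($v \in E^0$) and the partial isometries $s_e$ ($e \in E^1$). Since all four maps in sight are homomorphisms and $C^*(E)$ is generated by these elements, agreement on generators suffices. First I would record the effect of each constituent map on a generator, keeping careful track of the convention-clash discussed before the lemma: passing from $E$ to its path category $E^*$ interchanges range and source, so that an edge $e \in E^1$ becomes a morphism in $E^*$ with $s_{E^*}(e) = r_E(e)$ and $r_{E^*}(e) = s_E(e)$.

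For a vertex $v \in E^0$, one side gives $\tilde\iota_m(\psi_E(p_v)) = \tilde\iota_m(S_v) = \sum_{\lambda \in v(E^*)^{<m}} s_{(v,\lambda)}$, using the formula from Lemma~\ref{l:homomorphism KribSolel}. The other side gives $\psi_{E(m)}(\tilde\iota_{m,E}(p_v)) = \psi_{E(m)}\big(\sum_{\mu \in E^{<m}, s_E(\mu)=v} p_\mu\big) = \sum_{\mu \in E^{<m}, s_E(\mu)=v} S_\mu$, using Lemma~\ref{l:Rout} and Remark~\ref{rmk:C*(E)congC*(E*)} for $E(m)$. The matching here is the combinatorial heart of the routine bookkeeping: under the interchange of range and source, the condition $r_{E^*}(\lambda) = v$ defining $v(E^*)^{<m}$ becomes exactly $s_E(\mu) = v$, and the vertex $(v,\lambda) \in E^*(m)^0$ is identified (via Lemma~\ref{lem:E*(m)congE(m)*} and the identification $\Lambda(m)^0 \cong \Lambda^{<m}$) with $\mu \in E(m)^0 = E^{<m}$. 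I would make the index match explicit, confirming that the two sums range over the same set and have equal terms under the identification of the generators of $C^*(E(m)^*)$ with those of $C^*(E^*(m))$.

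For an edge $e \in E^1$, the same scheme applies: $\tilde\iota_m(\psi_E(s_e)) = \tilde\iota_m(S_e) = \sum_{\lambda \in s(e)(E^*)^{<m}} s_{(e,\lambda)}$, while $\psi_{E(m)}(\tilde\iota_{m,E}(s_e)) = \sum_{(e,\mu) \in E(m)^1} S_{(e,\mu)}$. Again I would check that the indexing set $s_{E^*}(e)(E^*)^{<m}$ — which under the convention swap means paths $\lambda$ with $r_E(e) = s_E(\lambda)$ of degree $<m$ — coincides with $\{(e,\mu) : (e,\mu)\in E(m)^1\} = \{(e,\mu) : r_E(e)=s_E(\mu),\ \mu \in E^{<m}\}$, and that the edge $(e,\lambda) \in E^*(m)^1$ is identified with $(e,\mu) \in E(m)^1 = (E(m)^*)^1$ precisely as in Lemma~\ref{lem:E*(m)congE(m)*}, which extends the identity on edges. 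The terms then match generator-for-generator.

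The main obstacle is not any deep argument but rather the disciplined tracking of the range/source interchange through every identification: between $E$-conventions and $E^*$-conventions, between $\Lambda(m)^0$ and $\Lambda^{<m}$, and between $E(m)^*$ and $E^*(m)$. The potential pitfall is an off-by-one or a directional slip in which the source map on $E(m)^1$ (whose definition in \cite{KribsSolel:JAMS07} splits into the cases $|\mu| < m-1$ and $|\mu| = m-1$) fails to line up with the range map $r((\lambda,\lambda')) = [\lambda\lambda']$ on $E^*(m)$. I would therefore verify carefully that these two source/range prescriptions agree under the isomorphism of Lemma~\ref{lem:E*(m)congE(m)*}, as this is the single place where the combinatorial definitions must genuinely be reconciled; once that is confirmed, equality on generators follows by the computations above.
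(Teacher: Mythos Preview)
Your proposal is correct and follows essentially the same approach as the paper: both proofs verify the identity on the generators $p_v$ and $s_e$ of $C^*(E)$ by computing each side and matching the indexing sets using the range/source interchange between $E$ and $E^*$. The paper's proof is terser but the computations and the bookkeeping you outline are exactly what it does.
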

\begin{proof}
Let $v \in E^{0}$.  Then
\[\tilde{\iota}_{m} \circ \psi_{E}  (p_{v})
    = \tilde{\iota}_{m} (S_{v})
    = \sum_{\lambda \in v (E^{*})^{< m}} S_{(v, \lambda)}
    = \sum_{\substack{\lambda \in E^{< m} \\  s_{E}(\lambda) = v}} S_{(v, \lambda)},
\]
and
\[\psi_{E(m)} \circ \tilde{\iota}_{m,E} (p_{v})
    = \psi_{E(m)} \Big(\sum_{\substack{\lambda \in E^{<m} \\ s_{E} (\lambda) = v}} p_{\lambda} \Big)
    = \sum_{\substack{\lambda \in E^{<m} \\ s_{E} (\lambda) = v}} S_{(s_{E} (\lambda), \lambda)}
    = \sum_{\substack{\lambda \in E^{<m} \\ s_{E} (\lambda) = v}} S_{(v, \lambda)}.
\]
Thus, $\tilde{\iota}_{m} \circ \psi_{E}  (p_{v}) = \psi_{E(m)} \circ \tilde{\iota}_{m,E}
(p_{v})$ for all $v \in E^{0}$. For $e \in E^{1}$,
\[
\tilde{\iota}_{m} \circ \psi_{E}  (s_{e})
    = \tilde{\iota}_{m} (S_{e})
    = \sum_{\lambda \in s(e) (E^{*})^{<m}} S_{(e, \lambda)}
    = \sum_{\substack{\lambda \in E^{<m}  \\ s_{E} (\lambda)  = r_{E} (e)}} S_{(e, \lambda)}
    = \sum_{(e, \lambda) \in E(m)^{1}} S_{(e, \lambda)},
\]
and
\[\psi_{E(m)} \circ \tilde{\iota}_{m,E} (s_{e})
    = \psi_{E(m)} \Big(\sum_{(e, \lambda) \in E(m)^{1}} s_{(e, \lambda)} \Big)
    = \sum_{(e, \lambda) \in E(m)^{1}} S_{(e, \lambda)}.
\]
So $\tilde{\iota}_{m} \circ \psi_{E}  (s_{e}) = \psi_{E(m)} \circ \tilde{\iota}_{m,E}
(s_{e})$ for all $e \in E^{1}$.  Since $C^{*} (E)$ is generated by $\{p_{v}, s_{e} \}_{ v
\in E^{0} , e \in E^{1}}$, we see that $\tilde{\iota}_{m} \circ \psi_{E}  = \psi_{E(m)}
\circ \tilde{\iota}_{m,E}$.
\end{proof}

\section{Asymptotic order-1 approximations}\label{sec:order1factorisations}

In this section, we show that given a row-finite $2$-graph with no sources, the family of
homomorphisms $(\tilde{\iota}_{n})_{n \in \NN^k}$ has an asymptotic order-1 approximation
through AF-algebras.  Thus, the family $( j_{n} \circ \tilde{\iota}_{n} )_{n \in
\NN^{k}}$ has an asymptotic order-1 approximation through AF-algebras.  We will use this
family of homomorphisms in the next section to prove that the nuclear dimension of a
UCT-Kirchberg algebra with trivial $K_{0}$ and finite $K_{1}$ is~1.

If $\ftn{f}{\NN^k}{\RR}$ is a function, then we write $\lim_{n \to \infty} f(n) = 0$ if
for every $\varepsilon > 0$ there exists $N \in \NN^k$ such that $|f(n)| < \varepsilon$
whenever $n \ge N$ in $\NN^k$.

Recall that a completely positive map $\ftn{\phi}{A}{B}$ has \emph{order-zero} if for $a,
b \in A_{+}$ with $ab = 0$, we have $\phi(a)\phi(b) = 0$. Suppose that $(\beta_n)_{n\in
\NN^k}$ is a family of homomorphisms $\ftn{\beta_n}{A}{B_n}$, and let $\mathcal{C}$ be a
class of $C^*$-algebras. Following
\cite[Definition~2.8]{RuizSimsTomforde:arXiv:1312.0507}\footnote{In the preprint version
of \cite{RuizSimsTomforde:arXiv:1312.0507} the authors mistakenly require just that each
$F_n$, rather than each $F_n^{(i)}$, belonged to $\mathcal{C}$; the intention was that
$\mathcal{C}$ should be closed under hereditary subalgebras and direct sums.}, a family
$(F_n, \phi_n, \psi_n)_{n\in \NN^k}$ is an \emph{asymptotic order-$r$ factorisation of
the family $(\beta_n)$ through elements of $\mathcal{C}$} if each $F_n$ is a direct sum
$F_n = \bigoplus^r_{i=0} F_n^{(i)}$ of $C^*$-algebras $F_n^{(i)} \in \mathcal{C}$, each
$\ftn{\psi_n}{A}{F_n}$ is a completely positive contraction, each
$\ftn{\phi_n}{F_n}{B_n}$ restricts to an order-zero completely positive contraction on
each $F_n^{(i)}$, and $\lim_{n \to \infty} \|\phi_n \circ \psi_n(a) - \beta_n(a)\| = 0$
for each $a \in A$. We say that $(F_n, \phi_n, \psi_n)_{n \in \NN^k}$ is an
\emph{asymptotic order-$r$ factorisation of $\ftn{\beta}{A}{B}$} if it is an asymptotic
order-$r$ factorisation of $(\beta)_{n \in \NN^k}$.

\begin{rmk}\label{rmk:approximation subseq}
Suppose that $(\ftn{\beta_n}{A}{B_n})_{n \in \NN^k}$ has an asymptotic order-$r$
factorisation through elements of $\mathcal{C}$. Then for any strictly increasing
sequence $(n^m)_{m \in \NN}$ in $\NN^k$ such that $n^m_j \to \infty$ as $m \to \infty$
for each $j \le k$, the sequence $(\beta_{n^m})_{m \in \NN}$ has an asymptotic order-$r$
factorisation through elements of $\mathcal{C}$ in the sense of
\cite[Definition~2.8]{RuizSimsTomforde:arXiv:1312.0507}.
\end{rmk}

Throughout this section, we use the following notation.  Let $\Lambda$ be a row-finite
$k$-graph with no sources and let $n \in \NN^{k}$. Then $\{t_{\lambda} \}_{\lambda \in
\Lambda} \subseteq \Tt C^{*} (\Lambda)$ and $\{T_{(\lambda,\lambda')}
\}_{(\lambda,\lambda') \in \Lambda(n)} \subseteq \Tt C^{*} (\Lambda(n))$ will be the
universal generating Toeplitz-Cuntz-Krieger families, and $\{s_{\lambda} \}_{\lambda \in
\Lambda } \subseteq C^{*} (\Lambda)$ and $\{S_{(\lambda, \lambda')} \}_{(\lambda,
\lambda') \in \Lambda(n)} \subseteq C^{*} (\Lambda(n))$ will be the universal generating
Cuntz-Krieger families. We will regard $\Tt C^{*} (\Lambda)$ as a sub-$C^{*}$-algebra of
$B(\ell^{2} (\Lambda))$. When $s(\mu) = s(\nu)$, we have
\begin{equation}\label{eq:Toeplitz series}
    t_{\mu} t_{\nu}^{*} = \sum_{\tau \in s(\mu)\Lambda} \theta_{\mu \tau, \nu \tau},
\end{equation}
where the series converges in the strict topology.

First we construct a homomorphism that we will use to define the maps $\phi_n$ in our
asymptotic factorisation.

\begin{lem}\label{lem:Lambda homomorphism}
Let $\Lambda$ be a row-finite $k$-graph with no sources. For $p,n \in \NN^{k}$, there is
a homomorphism $\ftn{\Gamma_p^{p+n}}{\bigoplus_{v \in \Lambda^0} \Kk_{\Lambda^{[p, p+n)}
v}}{\Tt C^*(\Lambda(n))}$ such that
\[\Gamma_p^{p+n}(\theta_{\mu,\nu}) = T_{(\mu, s(\mu))} T^*_{(\nu, s(\nu))}
\]
for all $\mu,\nu \in \Lambda^{[p, p+n)}$ with $s(\mu) = s(\nu)$.
\end{lem}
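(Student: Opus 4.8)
The plan is to construct the homomorphism via the universal property of the matrix algebra $\bigoplus_{v} \Kk_{\Lambda^{[p,p+n)}v}$. Since this source algebra is a direct sum of compact-operator algebras, specifying a homomorphism amounts to producing, for each vertex $v$, a family of elements $\{E_{\mu,\nu}\}$ indexed by $\mu,\nu \in \Lambda^{[p,p+n)}v$ that behave like matrix units, i.e.\ satisfy the relations
\begin{equation*}
E_{\mu,\nu}^* = E_{\nu,\mu}, \qquad E_{\mu,\nu} E_{\sigma,\tau} = \delta_{\nu,\sigma} E_{\mu,\tau}.
\end{equation*}
So first I would set $E_{\mu,\nu} := T_{(\mu,s(\mu))} T^*_{(\nu,s(\nu))}$ for $\mu,\nu \in \Lambda^{[p,p+n)}$ with $s(\mu)=s(\nu)$ and verify these two relations. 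Note the constraint $s(\mu)=s(\nu)$ is exactly what makes $s((\mu,s(\mu))) = s(\mu) = s(\nu) = s((\nu,s(\nu)))$ (after the identification $\Lambda(n)^0 \cong \Lambda^{<n}$, the source of $(\mu,s(\mu))$ is $s(\mu)$ viewed as the length-zero path at $s(\mu)$), which is the compatibility needed for the products to be nonzero. Since elements supported at distinct vertices $v$ multiply to zero, the direct-sum structure is automatic once each block's relations hold.

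The adjoint relation is immediate from $(T_{(\mu,s(\mu))} T^*_{(\nu,s(\nu))})^* = T_{(\nu,s(\nu))} T^*_{(\mu,s(\mu))}$. The multiplicative relation is the substantive computation. Using (TCK3) for the Toeplitz family $\{T_{(\lambda,\lambda')}\}$, we have $T^*_{(\nu,s(\nu))} T_{(\sigma,s(\sigma))}$, and I would expand this using (TCK4) together with the description of $\Lambda(n)^{\min}$ from Lemma~\ref{lem:Lambda(n)min}. The key point is that since $\mu,\nu,\sigma$ all have the same degree (they lie in $\Lambda^{[p,p+n)}$, but more importantly for the product I will want $d(\nu)=d(\sigma)$, or I restrict to the relevant graded pieces), the minimal common extensions degenerate: for $\nu,\sigma \in \Lambda$ of equal degree, $\Lambda^{\min}(\nu,\sigma)$ is $\{(s(\nu),s(\nu))\}$ if $\nu=\sigma$ and empty otherwise, exactly as in the Remark following the (TCK) axioms. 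This forces $T^*_{(\nu,s(\nu))} T_{(\sigma,s(\sigma))} = \delta_{\nu,\sigma}\, T_{s((\nu,s(\nu)))}$, and hence $E_{\mu,\nu} E_{\sigma,\tau} = \delta_{\nu,\sigma} E_{\mu,\tau}$ after reassembling with the outer partial isometries.

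The main obstacle I anticipate is bookkeeping around the degree constraint: the matrix units $\theta_{\mu,\nu}$ in $\bigoplus_v \Kk_{\Lambda^{[p,p+n)}v}$ range over \emph{all} pairs $\mu,\nu$ in the interval $\Lambda^{[p,p+n)}$ with common source, so $\mu$ and $\nu$ need not have the same degree. I must therefore check that the proposed formula really defines genuine matrix units over the full index set $\Lambda^{[p,p+n)}v$, which means verifying $T^*_{(\nu,s(\nu))} T_{(\sigma,s(\sigma))} = \delta_{\nu,\sigma} T_{s(\nu)}$ even when $d(\nu)\ne d(\sigma)$. I expect this to follow from Lemma~\ref{lem:Lambda(n)min}: the second coordinates are trivial paths $s(\nu)$ and $s(\sigma)$, so $[\nu\, s(\nu)]=[\nu]$ and $[\sigma\,s(\sigma)]=[\sigma]$; when these range projections disagree the product vanishes, and when they agree one checks via the explicit description that the only surviving term requires $\nu=\sigma$ with $\tau = s(\nu)$. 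Once the matrix-unit relations are confirmed over the whole index set, the universal property of the compact operators delivers a homomorphism on each summand, and these assemble to the desired $\Gamma_p^{p+n}$; contractivity and well-definedness are then automatic since $\ast$-homomorphisms out of $C^*$-algebras are norm-decreasing.
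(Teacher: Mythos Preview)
Your plan is correct and matches the paper's proof: verify that the proposed images form a system of matrix units by computing $T^*_{(\nu,s(\nu))}T_{(\sigma,s(\sigma))}$ via (TCK4) and Lemma~\ref{lem:Lambda(n)min}, then invoke the universal property of $\Kk$. The one point you leave as ``one checks'' is precisely the crux, so let me spell it out: if $[\nu]=[\sigma]$ then $[d(\nu)]=[d(\sigma)]$, and since $p\le d(\nu),d(\sigma)<p+n$ the difference $d(\nu)-d(\sigma)$ lies in $H_n$ with each coordinate strictly between $-n_i$ and $n_i$, forcing $d(\nu)=d(\sigma)$; then any $(\alpha,\beta)\in\Lambda^{\min}(\nu,\sigma)$ has $d(\alpha)=d(\beta)=0$, giving $\nu=\sigma$, and the condition $[\tau]=s(\nu)$ with $d(\tau)<n$ forces $\tau=s(\nu)$. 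This is exactly where the hypothesis that the paths lie in an interval of width $n$ is used.
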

\begin{proof}
We just have to check that the elements $\{T_{(\mu, s(\mu))} T^*_{(\nu, s(\nu))}\}_{\mu,
\nu \in \Lambda^{[p, p+n)} , s(\mu) = s(\nu) }$ are nonzero and are matrix units in the
sense that $(T_{(\mu, s(\mu))} T^*_{(\nu, s(\nu))})^* = T_{(\nu, s(\nu))} T^*_{(\mu,
s(\mu))}$ and
\[
T_{(\mu, s(\mu))} T^*_{(\nu, s(\nu))}T_{(\mu', s(\mu'))} T^*_{(\nu', s(\nu'))}
    = \delta_{\nu,\mu'} T_{(\mu, s(\mu))} T^*_{(\nu', s(\nu'))}.
\]
(It follows from the displayed equation that $\lsp\{(T_{(\mu, s(\mu))} T^*_{(\nu,
s(\nu))})^* : s(\mu) = s(\nu) = v\} \perp \lsp\{(T_{(\mu, s(\mu))} T^*_{(\nu, s(\nu))})^*
: s(\mu) = s(\nu) = w\}$ for distinct $v,w$.)

The $T_{(\mu, s(\mu))} T^*_{(\nu, s(\nu))}$ are nonzero by~\eqref{eq:Toeplitz series}.
Let $\mu, \nu \in \Lambda^{[p, p+n)}$. By Lemma~\ref{lem:Lambda(n)min},
\begin{align*}
&\Lambda(n)^{\mathrm{min}} ((\nu, s(\nu)), (\mu, s(\mu))) \\
    &\ = \begin{cases}
    \setof{((\alpha, \tau), (\beta, \tau))}
             {(\alpha, \beta) \in \Lambda^{\mathrm{min}} (\nu, \mu), \tau \in s(\alpha) \Lambda^{<n},
                [\alpha \tau] = s(\nu), [\beta \tau] = s(\mu)} &\text{ if $[ \mu ] = [ \nu ]$} \\
               \emptyset &\text{ otherwise.}
               \end{cases}
\end{align*}
We claim that
\[
\Lambda(n)^{\mathrm{min}} ((\nu, s(\nu)), (\mu, s(\mu)))
    = \begin{cases}
        \big\{\big((s(\nu), s (\nu)), (s(\nu ), s (\nu))\big)\big\} &\text{ if $\mu = \nu$}\\
        \emptyset &\text{ otherwise.}
    \end{cases}
\]
Indeed, if $\Lambda(n)^{\mathrm{min}} ((\nu, s(\nu)), (\mu, s(\mu))) \neq \emptyset$, say
$((\alpha, \tau), (\beta, \tau)) \in \Lambda(n)^{\mathrm{min}}((\nu, s(\nu)), (\mu,
s(\mu)))$, then $[ \mu ] = [ \nu ]$.  In particular, $[ d( \mu ) ] = d( [ \mu ] ) = d( [ \nu ] ) = [ d( \nu ) ]$.  Since $p \leq d(\nu), d(\mu) < p + n$, we have that $d(\nu) = d(\mu)$. Since
$\mu\alpha = \nu\beta$, the factorisation property forces $\mu = \nu$. We then have
$\Lambda^{\mathrm{min}} (\nu, \mu) = \Lambda^{\mathrm{min}} (\nu, \nu) = \{(s(\nu),
s(\nu)) \}$, giving
\[
\Lambda(n)^{\mathrm{min}} ((\nu, s(\nu)), (\mu, s(\mu))) = \{((s(\nu), s(\nu)), (s(\nu), s(\nu))) \}
\]
as claimed.

We now show that $\{ T_{(\mu, s(\mu))}T^*_{(\nu, s(\nu))} \}_{\mu, \nu \in \Lambda^{[p, p+n)} , s( \mu
) = s(\nu) }$ form a system of matrix units, so that
the formula given for $\Gamma_p^{p+n}$ indeed defines a homomorphism. For $\mu, \nu,
\mu', \nu' \in \Lambda^{[p, p+n)}$ with $s(\mu) = s(\nu)$ and $s (\mu') = s(\nu')$,
\begin{align*}
T_{(\mu, s(\mu))}T^*_{(\nu, s(\nu))}&  T_{(\mu', s(\mu'))} T_{(\nu', s(\nu'))}^{*} \\
    &= T_{(\mu, s(\mu))} \Big(\sum_{((\alpha, \gamma), (\beta, \delta))  \in  \Lambda(n)^{\mathrm{min}} ((\nu, s(\nu)), (\mu', s(\mu')))}
                                        T_{(\alpha, \gamma)} T_{(\beta, \delta)}^{*} \Big) T_{(\nu', s(\nu'))}^{*} \\
    &= \delta_{\nu, \mu'} T_{(\mu, s(\mu))} T_{(s(\nu), s(\nu))} T_{(\nu', s(\nu'))}^{*} \\
    &= \delta_{\nu, \mu'} T_{(\mu, s(\mu))} T_{(s(\mu), s(\mu))} T_{(s(\nu'), s(\nu'))} T_{(\nu', s(\nu'))}^{*} \\
    &= \delta_{\nu, \mu'} \delta_{s(\mu), s(\nu')} T_{(\mu, s(\mu))}T_{(\nu', s(\nu'))}^{*}.\qedhere
\end{align*}
\end{proof}

Next we provide a technical lemma and a proposition that summarises what we require to construct an
approximate order-1 factorisation of the family $(\tilde\iota_n)_{n \in \NN^k}$ obtained
from Lemma~\ref{l:homomorphism KribSolel}.

\begin{lem}\label{lem:equiv-exprs}
Let $\Lambda$ be a row-finite $k$-graph with no sources and let $n \in \NN^k$.  For each
$\mu \in \Lambda$
\begin{equation}\label{eq:iota_n relation}
    \iota_{n} (t_{\mu}) T_{(s(\mu), s(\mu))} = T_{(\mu, s(\mu))} =  T_{(r(\mu), [\mu])} \iota_{n} (t_{\mu}).
\end{equation}
For $\mu, \nu, \tau \in \Lambda$ with $s(\mu) = s(\nu) = r(\tau)$,
\[
T_{(\mu \tau, s(\mu \tau))} T_{(\nu \tau, s(\nu \tau))}^{*} = \iota_{n} (t_{\mu}) \iota_{n} (t_{\tau} t_{\tau}^{*}) T_{(r(\tau), [\tau])} \iota_{n} (t_\nu^{*})
    \quad \text{and} \quad T_{(\mu, s(\mu))} T_{(\mu, s(\mu))}^{*} = \iota_{n}(t_{\mu} t_{\mu}^{*}) T_{(r(\mu), [\mu])}.
\]
\end{lem}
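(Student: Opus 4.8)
The plan is to establish the first relation~\eqref{eq:iota_n relation} directly from the formula $\iota_n(t_\mu) = \sum_{\mu' \in s(\mu)\Lambda^{<n}} T_{(\mu,\mu')}$ of Lemma~\ref{l:homomorphism KribSolel}, and then to bootstrap the two remaining identities from it. Expanding with~(TCK2) in $\Tt C^*(\Lambda(n))$, and using the identification $\Lambda(n)^0 \cong \Lambda^{<n}$ under which $s((\lambda,\lambda')) = \lambda'$ and $r((\lambda,\lambda')) = [\lambda\lambda']$, I would compute $T_{(\mu,\mu')}T_{(s(\mu),s(\mu))} = \delta_{\mu',s(\mu)}T_{(\mu,s(\mu))}$ and $T_{(r(\mu),[\mu])}T_{(\mu,\mu')} = \delta_{[\mu],[\mu\mu']}T_{(\mu,\mu')}$. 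Summing the first over $\mu' \in s(\mu)\Lambda^{<n}$ gives $\iota_n(t_\mu)T_{(s(\mu),s(\mu))} = T_{(\mu,s(\mu))}$ immediately. For the second, the one point that needs care is that $[\mu] = [\mu\mu']$ forces $\mu' = s(\mu)$: comparing degrees yields $[d(\mu)] = [d(\mu)+d(\mu')]$, so $d(\mu') \in H_n$; but $0 \le d(\mu') < n$ and the only such element of $H_n$ is $0$, so $\mu'$ is a vertex with $r(\mu') = s(\mu)$, i.e.\ $\mu' = s(\mu)$. Hence $T_{(r(\mu),[\mu])}\iota_n(t_\mu) = T_{(\mu,s(\mu))}$ as well, completing~\eqref{eq:iota_n relation}.

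With the first relation in hand, I would next record the auxiliary identity $\iota_n(t_\nu)T_{(\tau,s(\tau))} = T_{(\nu\tau,s(\nu\tau))}$, proved by the same (TCK2) expansion: since $r((\tau,s(\tau))) = [\tau]$, only the index $\nu' = [\tau]$ survives, and $(\nu,[\tau])(\tau,s(\tau)) = (\nu\tau,s(\tau))$ with $s(\tau) = s(\nu\tau)$. Taking adjoints of this and of~\eqref{eq:iota_n relation} (recall $T_{(r(\tau),[\tau])}$ is a vertex projection, hence self-adjoint) gives $\iota_n(t_\tau)^*T_{(r(\tau),[\tau])} = T_{(\tau,s(\tau))}^*$ and $T_{(\tau,s(\tau))}^*\iota_n(t_\nu)^* = T_{(\nu\tau,s(\nu\tau))}^*$. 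To prove the first ``moreover'' identity I would then rewrite its right-hand side as $\iota_n(t_\mu)\iota_n(t_\tau)\iota_n(t_\tau)^*T_{(r(\tau),[\tau])}\iota_n(t_\nu)^*$ and collapse it from the inside out: the factor $\iota_n(t_\tau)^*T_{(r(\tau),[\tau])}$ becomes $T_{(\tau,s(\tau))}^*$, then $T_{(\tau,s(\tau))}^*\iota_n(t_\nu)^*$ becomes $T_{(\nu\tau,s(\nu\tau))}^*$, and $\iota_n(t_\mu)\iota_n(t_\tau) = \iota_n(t_{\mu\tau})$, leaving $\iota_n(t_{\mu\tau})T_{(\nu\tau,s(\nu\tau))}^*$. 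Finally, $T_{(\mu\tau,s(\mu\tau))} = \iota_n(t_{\mu\tau})T_{(s(\mu\tau),s(\mu\tau))}$ by~\eqref{eq:iota_n relation}, and since $s(\mu\tau) = s(\nu\tau)$ the vertex projection $T_{(s(\mu\tau),s(\mu\tau))}$ is the left support of $T_{(\nu\tau,s(\nu\tau))}^*$; absorbing it shows $\iota_n(t_{\mu\tau})T_{(\nu\tau,s(\nu\tau))}^* = T_{(\mu\tau,s(\mu\tau))}T_{(\nu\tau,s(\nu\tau))}^*$, as required.

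For the second ``moreover'' identity I would substitute the two forms $T_{(\mu,s(\mu))} = \iota_n(t_\mu)T_{(s(\mu),s(\mu))}$ and $T_{(\mu,s(\mu))}^* = \iota_n(t_\mu)^*T_{(r(\mu),[\mu])}$ from~\eqref{eq:iota_n relation} into $T_{(\mu,s(\mu))}T_{(\mu,s(\mu))}^*$, absorbing the middle vertex projection $T_{(s(\mu),s(\mu))}$ into $T_{(\mu,s(\mu))}^*$ (of which it is the left support) to obtain $\iota_n(t_\mu)\iota_n(t_\mu)^*T_{(r(\mu),[\mu])} = \iota_n(t_\mu t_\mu^*)T_{(r(\mu),[\mu])}$. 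I expect no serious obstacle here: every step is a finite (TCK2) computation followed by taking adjoints. The only genuinely substantive point is the degree argument in the first relation that $[\mu\mu']=[\mu]$ forces $\mu'$ to be a vertex; the remainder is careful bookkeeping of ranges and sources under the identification $\Lambda(n)^0 \cong \Lambda^{<n}$ together with repeated use of~\eqref{eq:iota_n relation}.
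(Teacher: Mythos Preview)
Your proof is correct and follows essentially the same approach as the paper: both establish~\eqref{eq:iota_n relation} by expanding $\iota_n(t_\mu)$ and using the degree argument that $[\mu\mu']=[\mu]$ forces $d(\mu')\in H_n\cap[0,n)=\{0\}$, and then derive the remaining identities from~\eqref{eq:iota_n relation} and the homomorphism property of $\iota_n$. The only cosmetic difference is that for the first ``moreover'' identity you work from the right-hand side inward using your auxiliary relation $\iota_n(t_\nu)T_{(\tau,s(\tau))}=T_{(\nu\tau,s(\nu\tau))}$, whereas the paper starts from the left-hand side, writes $T_{(\mu\tau,s(\mu\tau))}T_{(\nu\tau,s(\nu\tau))}^*=\iota_n(t_{\mu\tau})T_{(s(\tau),s(\tau))}\iota_n(t_{\nu\tau}^*)$, and then moves $T_{(s(\tau),s(\tau))}$ across $\iota_n(t_\tau^*)$ using the adjoint of~\eqref{eq:iota_n relation}.
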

\begin{proof}
Recall that $\iota_{n} (t_{\mu}) = \sum_{\lambda \in s(\mu) \Lambda^{<n}} T_{(\mu,
\lambda)}$.  So
\begin{align*}
\iota_{n} (t_{\mu}) T_{(s(\mu), s(\mu))}
    &=  \Big(\sum_{\lambda \in s(\mu) \Lambda^{<n}} T_{(\mu, \lambda)} \Big) T_{(s(\mu), s(\mu))}\\
    &=  \Big(\sum_{\lambda \in s(\mu) \Lambda^{<n}} T_{(\mu, \lambda)} T_{(s(\mu), \lambda)} \Big) T_{(s(\mu), s(\mu))}
	= T_{(\mu, s(\mu))}.
\end{align*}

We now prove that $ T_{(\mu, s(\mu))} = T_{(r(\mu), [\mu])} \iota_{n} (t_{\mu} )$. We
have
\[T_{(r(\mu), [\mu])} \iota_{n} (t_{\mu})
    = T_{(r(\mu), [\mu])} \sum_{\lambda \in s(\mu) \Lambda^{< n}} T_{(\mu, \lambda)}
    =  T_{(r(\mu), [\mu])} \sum_{\lambda \in s(\mu) \Lambda^{< n}} T_{(r(\mu), [\mu \lambda])} T_{(\mu, \lambda)}.
\]
Note that $T_{(r(\mu), [\mu])} T_{(r(\mu), [\mu \lambda])} \neq 0$ if and only if $[\mu]
= [\mu \lambda]$.  Let $\lambda \in s(\mu) \Lambda^{<n}$ with $[\mu] = [\mu \lambda]$.
Since $[\mu] = \mu (0,  [d(\mu)])$ and $[\mu \lambda] = (\mu \lambda)(0, [d(\mu
\lambda)])$, we see that $d(\lambda) = d(\mu\lambda) - d(\mu) \in H_n$. Since $d(\lambda)
< n$, we deduce that $d(\lambda) = 0$, giving $\lambda = r(\lambda) = s(\mu)$. Hence,
\[T_{(r(\mu), [\mu])} \iota_{n} (t_{\mu})
    =  T_{(r(\mu), [\mu])} \sum_{\lambda \in s(\mu) \Lambda^{< n}} T_{(r(\mu), [\mu \lambda])} T_{(\mu, \lambda)}
    = T_{(\mu, s(\mu))}.
\]
This proves~\eqref{eq:iota_n relation}.

For the second assertion, take $\mu, \nu, \tau \in \Lambda$ with $s(\mu) = s(\nu) = r
(\tau)$. Then~\eqref{eq:iota_n relation} gives
\begin{align*}
T_{(\mu \tau, s(\mu \tau))} T_{(\nu \tau, s(\nu \tau))}^{*}
    &= \iota_{n}(t_{\mu\tau}) T_{(s(\mu \tau), s(\mu \tau))} T_{(s(\nu \tau), s(\nu \tau))} \iota_{n} (t_{\nu \tau}^{*})
	= \iota_{n} (t_{\mu}) \iota_{n} (t_{\tau}) T_{(s(\tau), s(\tau))} \iota_{n} (t_{\tau}^{*}) \iota_{n} (t_{\nu}^{*}) \\
	&= \iota_{n} (t_{\mu}) \iota_{n} (t_{\tau}) \iota_{n} (t_{\tau}^{*}) T_{(r(\tau), [\tau])} \iota_{n} (t_{\nu}^{*})
	= \iota_{n} (t_{\mu}) \iota_{n} (t_{\tau} t_{\tau}^{*}) T_{(r(\tau), [\tau])} \iota_{n} (t_{\nu}^{*}),
\end{align*}
and
\begin{align*}
T_{(\mu, s(\mu))} T_{(\mu, s(\mu))}^{*}
	&= \iota_{n} (t_{\mu}) T_{(s (\mu), s(\mu))} \big(\iota_{n} (t_{\mu})  T_{(s (\mu), s(\mu))} \big)^{*}\\
    &= \iota_{n} (t_{\mu}) T_{(s (\mu), s(\mu))} \iota_{n} (t_{\mu}^{*})
	= \iota_{n}(t_{\mu} t_{\mu}^{*}) T_{(r(\mu), [\mu])}.\qedhere
\end{align*}
\end{proof}

Recall that for $n \in \NN^k$ with each $n_i
\ge 1$, the group $H_n$ is the subgroup
\[
	\setof{p \in \ZZ^k}{ n_i \text{ divides } p_i\text{ for each }i \le k}.
\]
For $x \in \RR^{k}$, let $\lceil x \rceil = \big(\lceil x_{1} \rceil, \dots, \lceil x_{k}
\rceil  \big) \in \ZZ^k$, and for $a \in \RR \setminus\{0\}$, put $\frac{x}{a} =
\big(\frac{{x}_{1}}{a}, \dots, \frac{x_{k}}{a} \big)$.

\begin{prp}\label{p: order 1 approximation}
Let $\Lambda$ be a row-finite $k$-graph with no sources. For each $n \in \NN^k$ such that
each $n_j > 0$, each $p < n$, and each $\mu \in \Lambda$, let $h_{n,\mu}(p)$ and
$g_{n,\mu}(p)$ be the unique elements in $H_n$ such that
\[\textstyle
n \leq d(\mu) + p + h_{n, \mu}(p) < 2n
    \quad \text{and} \quad \Big \lceil \frac{3n}{2} \Big \rceil \leq d(\mu) + p + g_{n, \mu}(p) <  \Big \lceil \frac{5n}{2} \Big \rceil.
\]
For each $n \in \NN^k$, let $\Delta_n$ be a function $\ftn{\Delta_n}{\NN^{k} \times
\NN^{k}}{[0,1)}$. For $i = 1,2$, define $\Delta_{n,i}^{\mu, \nu} : \NN^k  \to [0,1)$ by
$\Delta_{n,1}^{\mu, \nu}(p) := \Delta_{n} \big(d(\mu) + p + h_{n, \mu}(p) - n , d (\nu) +
p + h_{n,\mu}(p) - n\big)$ and $\Delta_{n,2}^{\mu, \nu}(p) := \Delta_{n} \big(d(\mu) + p
+ g_{n, \mu}(p) - \lceil \frac{3n}{2} \rceil, d (\nu) + p + g_{n, \mu}(p) - \lceil
\frac{3n}{2} \rceil\big)$. Suppose that for each $\mu, \nu \in \Lambda$,
\[
\lim_{n \to \infty} \max \big\{|\Delta_{n,1}^{\mu, \nu}(p) + \Delta_{n,2}^{\mu, \nu}(p) - 1 | \mathbin{\big|} p < n\big\} = 0.
\]
Suppose that there exist completely positive, contractive linear maps $\ftn{P_n}{\Tt C^{*}
(\Lambda)}{\mathcal{K}_{\Lambda^{[n, 2n)}}}$ and $\ftn{Q_n}{\Tt C^{*}
(\Lambda)}{\mathcal{K}_{\Lambda^{[\lceil 3n/2 \rceil, \lceil 5n/2 \rceil)}}}$ such that
\[
P_n(t_{\mu} t_{\nu}^{*})
    = \sum_{\substack{\tau \in s(\mu) \Lambda  \\ n \leq d(\mu \tau), d(\nu \tau)  < 2n}}
        \Delta_{n} (d(\mu \tau) - n, d (\nu \tau) - n) \theta_{\mu\tau, \nu \tau}
\]
and
\[
Q_n (t_{\mu} t_{\nu}^{*})
    = \sum_{\substack{\tau \in s(\mu) \Lambda  \\
            \left \lceil \frac{3n}{2} \right \rceil
                \leq d(\mu \tau), d(\nu \tau)
                < \left \lceil \frac{5n}{2} \right \rceil}}
                    {\textstyle\Delta_{n} \left(d(\mu \tau) - \left \lceil \frac{3n}{2} \right \rceil, d (\nu \tau)
                        - \left \lceil \frac{3n}{2} \right \rceil  \right) \theta_{\mu\tau, \nu \tau}}
\]
for all $\mu, \nu \in \Lambda$ with $s(\mu) = s(\nu)$. Then the family
$(\tilde{\iota}_{n})_{n \in \NN^k}$ has an order-1 approximation through AF-algebras.
\end{prp}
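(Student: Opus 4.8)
The plan is to assemble the factorisation directly from the two pavings $P_n,Q_n$ and the homomorphisms $\Gamma_p^{p+n}$ of Lemma~\ref{lem:Lambda homomorphism}. Put
\[
F_n^{(0)} := \bigoplus_{v \in \Lambda^0} \Kk_{\Lambda^{[n,2n)}v}, \qquad
F_n^{(1)} := \bigoplus_{v \in \Lambda^0} \Kk_{\Lambda^{[\lceil 3n/2\rceil,\lceil 5n/2\rceil)}v}, \qquad
F_n := F_n^{(0)} \oplus F_n^{(1)};
\]
each summand is a $c_0$-direct sum of elementary $C^*$-algebras, hence AF. Since every matrix unit in $P_n(t_\mu t_\nu^*)$ and $Q_n(t_\mu t_\nu^*)$ has the form $\theta_{\mu\tau,\nu\tau}$ with $s(\mu\tau)=s(\nu\tau)$, continuity shows that $P_n$ and $Q_n$ take values in the block-diagonal subalgebras $F_n^{(0)}$ and $F_n^{(1)}$. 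Using that $C^*(\Lambda)$ is nuclear, fix (by Choi--Effros) a completely positive contractive lift $\ftn{\sigma}{C^*(\Lambda)}{\Tt C^*(\Lambda)}$ of the quotient map $\ftn{q}{\Tt C^*(\Lambda)}{C^*(\Lambda)}$, and define $\ftn{\psi_n}{C^*(\Lambda)}{F_n}$ by $\psi_n := (P_n\circ\sigma)\oplus(Q_n\circ\sigma)$; this is completely positive and contractive because $F_n$ carries the maximum norm. With $\ftn{q_n}{\Tt C^*(\Lambda(n))}{C^*(\Lambda(n))}$ the quotient map, define $\ftn{\phi_n}{F_n}{C^*(\Lambda(n))}$ by $\phi_n(x,y) := q_n\big(\Gamma_n^{2n}(x)\big) + q_n\big(\Gamma_{\lceil 3n/2\rceil}^{\lceil 5n/2\rceil}(y)\big)$. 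Each $\phi_n|_{F_n^{(i)}}$ is a composite of $*$-homomorphisms, hence an order-zero completely positive contraction, so $(F_n,\phi_n,\psi_n)$ has the right shape; only the estimate $\lim_n\|\phi_n\circ\psi_n(a)-\tilde\iota_n(a)\|=0$ remains.

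All maps in sight are uniformly bounded and the products $s_\mu s_\nu^*$ with $s(\mu)=s(\nu)$ span a dense subalgebra of $C^*(\Lambda)$, so it suffices to treat $a=s_\mu s_\nu^*$. First I would replace the lift $\sigma(s_\mu s_\nu^*)$ by the canonical lift $t_\mu t_\nu^*$: their difference lies in $\ker q$, which is the ideal generated by the gap elements $t_v-\sum_{e\in v\Lambda^{e_i}}t_e t_e^*$. A short computation with the defining formula for $P_n$ shows that $P_n$ annihilates each spanning element $t_\alpha\big(t_v-\sum_{e\in v\Lambda^{e_i}}t_e t_e^*\big)t_\beta^*$ once $n_i>d(\alpha)_i$, since a surviving $\theta_{\mu\tau,\nu\tau}$ would require a path of degree $0$ in coordinate $i$ to reach the window $\Lambda^{[n,2n)}$; the same holds for $Q_n$. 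Hence, by contractivity and density, $\|P_n(\sigma(s_\mu s_\nu^*))-P_n(t_\mu t_\nu^*)\|\to 0$ and likewise for $Q_n$, so it remains to prove $\phi_n\big(P_n(t_\mu t_\nu^*),Q_n(t_\mu t_\nu^*)\big)\to\tilde\iota_n(s_\mu s_\nu^*)$.

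For the core estimate, apply $q_n\circ\Gamma_n^{2n}$ to $P_n(t_\mu t_\nu^*)$ term by term, obtaining $\sum_\tau\Delta_n(d(\mu\tau)-n,d(\nu\tau)-n)\,S_{(\mu\tau,s(\tau))}S_{(\nu\tau,s(\tau))}^*$ over $\tau\in s(\mu)\Lambda$ with $n\le d(\mu\tau),d(\nu\tau)<2n$. I would reindex by the residue $p:=[d(\tau)]$ and initial segment $\alpha:=[\tau]\in s(\mu)\Lambda^p$, writing $\tau=\alpha\tau'$. For fixed $p$ the two window constraints are simultaneously solvable only when $h_{n,\mu}(p)=h_{n,\nu}(p)$, in which case $d(\tau)=p+h_{n,\mu}(p)$, the weight is the constant $\Delta_{n,1}^{\mu,\nu}(p)$, and $\tau'$ ranges over $s(\alpha)\Lambda^{h_{n,\mu}(p)}$. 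Summing over $\tau'$ collapses the fibre: since $(\mu\alpha\tau',s(\tau'))=(\mu\alpha,s(\alpha))(\tau',s(\tau'))$ and $\sum_{\tau'}S_{(\tau',s(\tau'))}S_{(\tau',s(\tau'))}^*$ is the vertex projection at $s(\alpha)$ by relation~(CK) in $C^*(\Lambda(n))$, one gets $\sum_{\tau'}S_{(\mu\alpha\tau',s(\tau'))}S_{(\nu\alpha\tau',s(\tau'))}^*=S_{(\mu\alpha,s(\alpha))}S_{(\nu\alpha,s(\alpha))}^*$. Now $(\mu\alpha,s(\alpha))=(\mu,\alpha)(\alpha,s(\alpha))$, and $(\alpha,s(\alpha))$ is the \emph{unique} element of $(r(\alpha),\alpha)\Lambda(n)^{d(\alpha)}$, so $S_{(\alpha,s(\alpha))}S_{(\alpha,s(\alpha))}^*$ is the full vertex projection at $\alpha$ and the block simplifies to $S_{(\mu,\alpha)}S_{(\nu,\alpha)}^*$. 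Thus
\[
q_n\big(\Gamma_n^{2n}(P_n(t_\mu t_\nu^*))\big)
    = \sum_{p}\Delta_{n,1}^{\mu,\nu}(p)\sum_{\alpha\in s(\mu)\Lambda^p}S_{(\mu,\alpha)}S_{(\nu,\alpha)}^*,
\]
the outer sum running over residues with $h_{n,\mu}(p)=h_{n,\nu}(p)$, and similarly for $Q_n$ with $\Delta_{n,2}^{\mu,\nu}$ and $g_{n,\mu},g_{n,\nu}$. Since $\tilde\iota_n(s_\mu s_\nu^*)=\sum_{\lambda\in s(\mu)\Lambda^{<n}}S_{(\mu,\lambda)}S_{(\nu,\lambda)}^*$ and $S_{(\mu,\alpha)}^*S_{(\mu,\alpha')}=\delta_{\alpha,\alpha'}S_{(s(\mu),\alpha)}$, the elements $\{S_{(\mu,\alpha)}S_{(\nu,\alpha)}^*\}_{\alpha\in s(\mu)\Lambda^{<n}}$ form a system of matrix units, whence the norm of the difference of the two sums equals $\sup_\alpha|c_n([d(\alpha)])-1|$, where $c_n(p)$ is the combined coefficient of the $p$-block. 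It therefore suffices to prove $\max_{p<n}|c_n(p)-1|\to 0$.

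This last point is the main obstacle, and it is where the design of the two pavings is used. The combined coefficient equals $\Delta_{n,1}^{\mu,\nu}(p)+\Delta_{n,2}^{\mu,\nu}(p)$ precisely because the residues at which the $\mu$- and $\nu$-windows disagree carry no weight: at such $p$ the relevant evaluation of $\Delta_n$ falls outside $[0,n)^k\times[0,n)^k$, where $\Delta_n$ vanishes, so the absent term and the vanishing weight match. The genuine delicacy is to verify this matching together with the fact that the half-period offset between the windows $[n,2n)$ and $[\lceil 3n/2\rceil,\lceil 5n/2\rceil)$ leaves every residue in the interior of at least one window (so that no block is systematically dropped by both pavings). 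Granting this bookkeeping—which is exactly what the elements $h_{n,\mu},g_{n,\mu}$ and the shifted second window are engineered to supply—one has $\max_{p<n}|c_n(p)-1|\le \max_{p<n}|\Delta_{n,1}^{\mu,\nu}(p)+\Delta_{n,2}^{\mu,\nu}(p)-1|$, and the standing hypothesis forces this to $0$, completing the verification and hence the proof.
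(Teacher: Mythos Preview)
Your argument follows the paper's route: build $\psi_n$ from $P_n,Q_n$ composed with a Choi--Effros splitting $\sigma$, build $\phi_n$ from the homomorphisms $\Gamma$ of Lemma~\ref{lem:Lambda homomorphism} composed with the quotient $q_n$, then verify the norm estimate on spanning elements by reindexing over residues $p<n$ and collapsing via~(CK). The core computation---factoring through $(\mu\alpha,s(\alpha))=(\mu,\alpha)(\alpha,s(\alpha))$ and using that $(\alpha,s(\alpha))$ is the unique element of $(r(\alpha),\alpha)\Lambda(n)^{d(\alpha)}$---is a correct variant of the paper's computation via Lemma~\ref{lem:equiv-exprs}, and your final norm estimate via orthogonality of the $S_{(\mu,\alpha)}S_{(\nu,\alpha)}^*$ matches the paper's.

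There is one unnecessary detour that is not fully justified as written. You try to replace $\sigma(s_\mu s_\nu^*)$ by $t_\mu t_\nu^*$ by arguing that $P_n,Q_n$ asymptotically annihilate $\ker q$, but your verification covers only elements of the form $t_\alpha\big(t_v-\sum_e t_e t_e^*\big)t_\beta^*$; a dense subspace of the ideal $\ker q$ consists of sums of $t_\mu t_\nu^*\big(t_v-\sum_e t_e t_e^*\big)t_{\mu'}t_{\nu'}^*$, and reducing these to your special form requires commutation identities you do not supply. The paper avoids this entirely: it proves
\[
\big\|\pi_n\big((\Gamma_n^{2n}\circ P_n + \Gamma_{\lceil 3n/2\rceil}^{\lceil 5n/2\rceil}\circ Q_n) - \iota_n\big)(x)\big\|\to 0
\]
for all $x\in\Tt C^*(\Lambda)$ (on spanning elements $t_\mu t_\nu^*$, then by density and uniform boundedness), and then substitutes $x=\sigma(a)$. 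Since $\pi_n\circ\iota_n=\tilde\iota_n\circ q$ by Lemma~\ref{l:homomorphism KribSolel} and $q\circ\sigma=\id$, this gives $\phi_n\circ\psi_n(a)\to\tilde\iota_n(a)$ with no analysis of $\ker q$ at all. Adopting this device removes your gap.

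On the ``bad residues'' where $h_{n,\mu}(p)\ne h_{n,\nu}(p)$, you are more explicit than the paper: you note that the reindexing silently drops the constraint $n\le d(\nu\tau)<2n$ and argue that the missing terms match vanishing weights because $\Delta_n$ vanishes outside $[0,n)^k\times[0,n)^k$. That vanishing is not among the stated hypotheses of the Proposition (and the second argument of $\Delta_{n,1}^{\mu,\nu}(p)$ need not even lie in $\NN^k$), but it does hold for the $\Delta_n$ of Theorem~\ref{thm: order 1 approximation}, and the paper makes the same tacit assumption when it passes from the constrained sum to the reindexed one.
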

\begin{proof}
For each $n \in \NN^k$, let $\ftn{\pi_n}{\Tt C^{*} (\Lambda(n))}{C^{*} (\Lambda(n))}$ be
the quotient homomorphism.  We first show that for all $\mu, \nu \in \Lambda$ with
$s(\mu) = s(\nu)$,
\begin{equation}\label{eq:approx iota}
\lim_{n \to \infty} \Big\| \pi_{n}\Big(\Big(\big(\Gamma_{n}^{2n} \circ P_n
                                + \Gamma_{\lceil \frac{3n}{2} \rceil}^{\lceil \frac{5n}{2} \rceil} \circ Q_n\big) - \iota_{n}\Big)
                                    (t_{\mu} t_{\nu}^{*})\Big)\Big\| = 0,
\end{equation}
where the $\Gamma$'s are the homomorphisms constructed in Lemma~\ref{lem:Lambda
homomorphism}. For this, let $\mu, \nu \in \Lambda$ with $s(\mu) = s(\nu)$, and fix $n
\in \NN^k$. Lemma~\ref{lem:Lambda homomorphism} gives
\[
\Gamma_{n}^{2n} \circ P_{n} (t_{\mu} t_{\nu}^{*})
    = \sum_{\substack{\tau \in s(\mu) \Lambda  \\ n \leq d(\mu \tau), d(\nu \tau)  < 2n}}
        \Delta_{n} (d(\mu \tau) - n, d (\nu \tau) - n) T_{(\mu \tau, s(\mu \tau))} T_{(\nu \tau, s(\nu \tau))}^{*}.
\]
Lemma~\ref{lem:equiv-exprs} shows that $T_{(\mu \tau, s(\mu \tau))} T_{(\nu \tau, s(\nu
\tau))}^{*} = \iota_{n} (t_{\mu}) \iota_{n} (t_{\tau} t_{\tau}^{*}) T_{(r(\tau), [\tau])}
\iota_{n} (t_\nu^{*})$. So,
\begin{align*}
\Gamma_{n}^{2n} \circ P_{n} (t_{\mu} t_{\nu}^{*})
    &= \iota_{n} (t_{\mu})  \Big(\sum_{\substack{\tau \in s(\mu) \Lambda  \\ n \leq d(\mu \tau), d(\nu \tau)  < 2n}}
        \Delta_{n} (d(\mu \tau) - n, d (\nu \tau) - n) \iota_{n} (t_{\tau} t_{\tau}^{*}) T_{(r(\tau), [\tau])} \Big) \iota_{n} (t_\nu^{*})  \\
    &=\iota_{n} (t_{\mu}) \Big(\sum_{p < n} \sum_{\alpha \in s(\mu) \Lambda^{p}} \sum_{\rho \in s(\alpha) \Lambda^{h_{n,\mu}(p)}}
        \Delta_{n,1}^{\mu, \nu}(p)  \iota_{n} (t_{\alpha \rho} t_{\alpha \rho}^{*}) T_{(r(\alpha \rho), [\alpha \rho])} \Big) \iota_{n} (t_\nu^{*}) \\
    &= \iota_{n} (t_{\mu}) \Big(\sum_{p < n} \sum_{\alpha \in s(\mu) \Lambda^{p}} \sum_{\rho \in s(\alpha) \Lambda^{h_{n,\mu}(p)}}
        \Delta_{n,1}^{\mu, \nu}(p)  \iota_{n} (t_{\alpha}) \iota_{n}(t_{\rho} t_{\rho}^{*}) \iota_{n} (t_{\alpha}^{*})
            T_{(s(\mu), \alpha)} \Big) \iota_{n} (t_\nu^{*}) \\
    &=  \iota_{n} (t_{\mu}) \Big(\sum_{p < n} \sum_{\alpha \in s(\mu) \Lambda^{p}}
        \Delta_{n,1}^{\mu, \nu}(p)  \iota_{n} (t_{\alpha})
            \Big(\sum_{\rho \in s(\alpha) \Lambda^{h_{n,\mu}(p)}} \iota_{n}(t_{\rho} t_{\rho}^{*}) \Big)
            \iota_{n} (t_{\alpha}^{*})  T_{(s(\mu), \alpha)} \Big) \iota_{n} (t_\nu^{*}).
\end{align*}
Relation~(CK) for $\{s_\lambda\}_{\lambda \in \Lambda}$ gives
\begin{align*}
\pi_{n} &{}\circ \Gamma_{n}^{2n} \circ P_n (t_{\mu} t_{\nu}^{*}) \\
    &= \widetilde{\iota}_{n} (s_{\mu}) \Big(\sum_{p < n} \sum_{\alpha \in s(\mu) \Lambda^{p}}
        \Delta_{n,1}^{\mu, \nu}(p)  \widetilde{\iota}_{n} (s_{\alpha}) \widetilde{\iota}_{n} (s_{s(\alpha)}) \widetilde{\iota}_{n} (s_{\alpha}^{*})
            S_{(s(\mu), \alpha)} \Big) \widetilde{\iota}_{n} (s_\nu^{*}) \\
    &= \widetilde{\iota}_{n} (s_{\mu}) \Big(\sum_{p < n} \sum_{\alpha \in s(\mu) \Lambda^{p}}
        \Delta_{n,1}^{\mu, \nu}(p) \widetilde{\iota}_{n} (s_{\alpha}) \widetilde{\iota}_{n} (s_{\alpha}^{*})
            S_{(s(\mu), \alpha)} \Big) \widetilde{\iota}_{n} (s_\nu^{*})
\end{align*}
By Lemma~\ref{lem:equiv-exprs}, $S_{(\alpha, s(\alpha))} S_{(\alpha, s(\alpha))}^{*} =
\widetilde{\iota}_{n}(s_{\alpha} s_{\alpha}^{*}) S_{(r(\alpha), [\alpha])} =
\widetilde{\iota}_{n} (s_{\alpha}) \widetilde{\iota}_{n} (s_{\alpha}^{*}) S_{(s(\mu),
\alpha)}$ for all $\alpha \in s(\mu) \Lambda^{<n}$, and hence
\begin{equation}\label{eq:piGammaP sum}
\pi_n \circ \Gamma^{2n}_n \circ P_n(t_\mu t^*_\nu)
    = \widetilde{\iota}_{n} (s_{\mu}) \Big(\sum_{p < n} \sum_{\alpha \in s(\mu) \Lambda^{p}}
        \Delta_{n,1}^{\mu, \nu}(p) S_{(\alpha, s(\alpha))} S_{(\alpha, s(\alpha))}^{*} \Big) \widetilde{\iota}_{n} (s_\nu^{*}).
\end{equation}

Take $p < n$ and $\alpha \in s(\mu) \Lambda^{p}$. Then
\begin{align*}
\{(\lambda, \lambda') \in \Lambda(n)^{p}  \mid  {}&r((\lambda, \lambda')) = (s(\mu), \alpha)\} \\
    &= \setof{(\lambda, \lambda') \in \Lambda \times \Lambda^{< n}}{s(\lambda) = r(\lambda'), d(\lambda) = p,
        (r(\lambda), [\lambda \lambda']) = (s(\mu), \alpha)}.
\end{align*}
Suppose that $(\lambda, \lambda') \in \Lambda(n)^{p}$ with $r ((\lambda, \lambda')) =
(s(\mu), \alpha)$.  Then $[\lambda\lambda'] = \alpha$, so $p = d(\alpha) = d([\lambda
\lambda']) = [d(\lambda \lambda')] = [p + d(\lambda')]$. Hence $[p] = [p + d(\lambda')]$,
and since $d(\lambda') < n$, this forces $d(\lambda') = 0$.  Therefore, $\alpha = [ \lambda \lambda' ] = [ \lambda ] = \lambda$ since $d( \lambda ) = p < n$ and $\lambda' = r( \lambda' ) = s( \lambda ) = s( \alpha )$.  Hence
\[
\setof{(\lambda, \lambda') \in \Lambda(n)^{p}}{ r((\lambda, \lambda')) = (s(\mu), \alpha)} = \{(\alpha, s(\alpha))\},
\]
which implies that each $S_{(\alpha, s(\alpha))} S_{(\alpha, s(\alpha))}^{*} =
S_{(s(\mu), \alpha)}$ by~(CK) in $C^*(\Lambda(n))$. Combining this
with~\eqref{eq:piGammaP sum} gives
\[\pi_{n} \circ \Gamma_{n}^{2n} \circ P_n (t_{\mu} t_{\nu}^{*})
    = \widetilde{\iota}_{n} (s_{\mu}) \Big(\sum_{p < n} \sum_{\alpha \in s(\mu) \Lambda^{p}}
        \Delta_{n,1}^{\mu, \nu}(p) S_{(s(\mu), \alpha)}  \Big) \widetilde{\iota}_{n} (s_\nu^{*}).
\]
A similar computation gives
\[\pi_{n} \circ \Gamma_{\lceil \frac{3n}{2}  \rceil}^{\lceil \frac{5n}{2} \rceil} \circ Q_n (t_{\mu} t_{\nu}^{*})
    =  \widetilde{\iota}_{n} (s_{\mu}) \Big(\sum_{p < n} \sum_{\alpha \in s(\mu) \Lambda^{p}}
        \Delta_{n,2}^{\mu, \nu}(p)  S_{(s(\mu), \alpha)}  \Big) \widetilde{\iota}_{n} (s_\nu^{*}).
\]

Since $\{S_{(s(\mu), \alpha)} \}_{ \alpha \in s(\mu) \Lambda^{<n}}$ is a collection of mutually orthogonal projections,
\begin{align*}
\big\| \big(\pi_{n} \circ \Gamma_{n}^{2n}& \circ P_n (t_{\mu} t_{\nu}^{*})
            + \pi_{n} \circ \Gamma_{\lceil \frac{3n}{2} \rceil}^{\lceil \frac{5n}{2} \rceil} \circ Q_n (t_{\mu} t_{\nu}^{*})\big)
            - \pi_{n} \circ \iota_{n} (t_{\mu} t_{\nu}^{*}) \big\| \\
    &\leq \Big\|\sum_{p < n} \sum_{\alpha \in s(\mu) \Lambda^{p}}
            (\Delta_{n,1}^{\mu, \nu}(p)  + \Delta_{n,2}^{\mu, \nu}(p)) S_{(s(\mu), \alpha)}
                - \widetilde{\iota}_{n} (s_{s(\mu)}) \Big\| \\
    &= \Big\|\sum_{p < n} \sum_{\alpha \in s(\mu) \Lambda^{p}}
            (\Delta_{n,1}^{\mu, \nu}(p)  + \Delta_{n,2}^{\mu, \nu}(p)) S_{(s(\mu), \alpha)}
            - \sum_{p < n} \sum_{\alpha \in s(\mu) \Lambda^{p}} S_{(s(\mu), \alpha)} \Big\| \\
    &= \max_{p < n} | \Delta_{n,1}^{\mu, \nu}(p)  + \Delta_{n,2}^{\mu, \nu}(p)  - 1|.
\end{align*}
By assumption, $\lim_{n \to \infty} \max_{p < n} | \Delta_{n,1}^{\mu, \nu}(p) +
\Delta_{n,2}^{\mu, \nu}(p)  - 1 | = 0$.  This proves~\eqref{eq:approx iota}.

Since $k$-graph algebras are nuclear \cite[Theorem~5.5]{KumjianPask:NYJM00}, we may apply
\cite[Theorem~3.10]{ChoiEffros:ANMATH} to obtain a contractive completely positive
splitting  $\ftn{\sigma}{C^{*} (\Lambda)}{\Tt C^{*} (\Lambda)}$ for the quotient map. For
each $n$, define $\ftn{\psi_{n}}{C^{*}(\Lambda)}{\Kk_{\Lambda^{[n, 2n)}} \oplus
\Kk_{\Lambda^{[\lceil 3n/2 \rceil, \lceil 5n/2 \rceil)}}}$ by $\psi_n(a) :=
\big(P_n(\sigma(a)), Q_n(\sigma(a))\big)$ and $\ftn{\phi_{n}}{\Kk_{\Lambda^{[n, 2n)}}
\oplus \Kk_{\Lambda^{[\lceil 3n/2 \rceil, \lceil 5n/2 \rceil)}}}{C^{*}(\Lambda(n))}$ by
$\phi_{n} ( ( a, b ) ) = \pi_{n} (\Gamma_{n}^{2n}(a) +\Gamma^{\lceil \frac{5n}{2}
\rceil}_{\lceil \frac{3n}{2} \rceil}(b))$. By Lemma~\ref{lem:Lambda homomorphism},
$\phi_{n}$ restricts to a homomorphism (and in particular an order-zero map) on each of
$\Kk_{\Lambda^{[n, 2n)}}$ and $\Kk_{\Lambda^{[\lceil 3n/2 \rceil, \lceil 5n/2 \rceil)}}$.
Since $\Tt C^{*} ( \Lambda ) = \overline{ \mathrm{span} } \setof{ t_{\mu} t_{\nu}^{*} }{
\mu, \nu \in \Lambda, s( \mu ) = s( \nu )}$ and since
\[\lim_{n \to \infty} \big\| \pi_{n} \circ \Gamma_{n}^{2n} \circ P_n (t_{\mu} t_{\nu}^{*})
        + \pi_{n} \circ \Gamma_{\lceil \frac{3n}{2} \rceil}^{\lceil \frac{5n}{2} \rceil} \circ Q_n (t_{\mu} t_{\nu}^{*})
        -  \pi_{n} \circ \iota_{n} (t_{\mu} t_{\nu}^{*}) \big\| = 0
\]
for all $\mu, \nu \in \Lambda$ with $s(\mu) = s(\nu)$, the family $\left(\Kk_{\Lambda^{[n, 2n)}} \oplus \Kk_{\Lambda^{[\lceil 3n/2 \rceil,
\lceil 5n/2 \rceil)}}, \psi_{n}, \phi_{n} \right)$ is an asymptotic order-1 approximation
of $(\tilde{\iota}_{n})_{n \in \NN^k}$ through AF-algebras.
\end{proof}

\begin{ntn}
Following \cite{WinterZacharias:Adv10}, for each $m \in \NN$, define $\kappa_{m} \in
M_{\{0, \dots, m-1\}}\big([0,1)\big)$ as follows: put $l := \lceil\frac{m}{2}\rceil$, and
define
\begin{gather*}
\kappa_{m} = \frac{1}{l+1}
    \left(
    \begin{matrix}
        1 & 1 & \dots & 1 & 1 &\dots & 1 & 1 \\
        1 & 2 & \dots & 2 & 2 &\dots & 2 & 1 \\
        \vdots & \vdots & \ddots  &\vdots &\vdots & \iddots & \vdots &\vdots  \\
        1 & 2 & \dots & l & l & \dots & 2 & 1 \\
        1 & 2 & \dots & l & l & \dots & 2 & 1 \\
        \vdots & \vdots & \iddots &\vdots & \vdots & \ddots &\vdots &\vdots  \\
        1 & 2 & \dots & 2 & 2 &\dots & 2 & 1 \\
        1 & 1 & \dots & 1 & 1 &\dots & 1 & 1 \\
    \end{matrix}
    \right) \qquad\text{ if $m$ is even} \\
\kappa_{m} = \frac{1}{l+2}
    \left(
    \begin{matrix}
        1 & 1 & \dots & 1 &\dots & 1 & 1 \\
        1 & 2 & \dots & 2 &\dots & 2 & 1 \\
        \vdots & \vdots & \ddots &\vdots & \iddots & \vdots &\vdots  \\
        1 & 2 & \dots & l+1 & \dots & 2 & 1 \\
        \vdots & \vdots & \iddots &\vdots & \ddots &\vdots &\vdots  \\
        1 & 2 & \dots & 2 &\dots & 2 & 1 \\
        1 & 1 & \dots & 1 &\dots & 1 & 1 \\
    \end{matrix}
    \right) \qquad\text{ if $m$ is odd.}
\end{gather*}
Define $\kappa_{m}(i,j) = 0$ for $(i,j) \in \ZZ^2 \setminus (\{0, \dots, m-1\} \times
\{0, \dots, m-1\})$.
\end{ntn}

\begin{thm}\label{thm: order 1 approximation}
Let $\Lambda$ be a row-finite $2$-graph with no sources. Then $(\tilde{\iota}_{n})_{n \in
\NN^k}$ has an asymptotic order-1 approximation through AF-algebras.
\end{thm}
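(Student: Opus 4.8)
The plan is to deduce the theorem from Proposition~\ref{p: order 1 approximation} applied with $k = 2$, so that everything reduces to producing symbols $\ftn{\Delta_n}{\NN^2 \times \NN^2}{[0,1)}$ together with the associated completely positive contractions $P_n$ and $Q_n$ given by the two displayed formulas. I would deal with $P_n$ and $Q_n$ first, as they are the routine half. Write $E_n$ for the compression $x \mapsto QxQ$, where $Q$ is the projection of $\ell^2(\Lambda)$ onto $\ell^2(\Lambda^{[n,2n)})$; by~\eqref{eq:Toeplitz series} this completely positive contraction sends $t_\mu t_\nu^*$ to $\sum_{\tau :\, \mu\tau,\nu\tau \in \Lambda^{[n,2n)}} \theta_{\mu\tau,\nu\tau}$. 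Composing $E_n$ with the Schur multiplier on $\Kk_{\Lambda^{[n,2n)}}$ that scales $\theta_{\lambda,\lambda'}$ by $\Delta_n(d(\lambda)-n, d(\lambda')-n)$ gives exactly the required $P_n$, and $Q_n$ is obtained identically from the window $\Lambda^{[\lceil 3n/2\rceil, \lceil 5n/2\rceil)}$. A Schur multiplier is completely positive exactly when its symbol matrix is positive semidefinite, with cb-norm the supremum of its diagonal; since the symbol here depends only on the degrees $d(\lambda)$, it is an inflation of the finite matrix $\big(\Delta_n(x,y)\big)_{x,y < n}$ and is positive semidefinite precisely when that matrix is. Thus $P_n$ and $Q_n$ are completely positive contractions as soon as $\Delta_n$ restricts to a positive semidefinite matrix with diagonal at most $1$ on each window.

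This dictates the construction of $\Delta_n$: I would assemble it from the one-dimensional tent matrices $\kappa_m$ defined above. Each $\kappa_m$ has the form $\tfrac{1}{l+1}\big(\min(f(i),f(j))\big)_{i,j}$ with $f(i) = \min(i+1, m-i) > 0$, hence is a Gram matrix and so positive semidefinite, and its diagonal is bounded by $l/(l+1) < 1$; thus $\kappa_m$ is a positive contraction. Set $N := n_1 n_2$, fix a bijection $\ftn{\beta_n}{\{x \in \NN^2 : x < n\}}{\{0,\dots,N-1\}}$, and define $\Delta_n(x,y) := \kappa_N(\beta_n(x), \beta_n(y))$, extended $n$-periodically in each variable. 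The out-of-window values never enter $P_n$ or $Q_n$, but they are what the limit hypothesis of Proposition~\ref{p: order 1 approximation} sees, and the periodic extension turns the honest offsets appearing in $\Delta_{n,1}^{\mu,\nu}$ and $\Delta_{n,2}^{\mu,\nu}$ into cyclic ones, so that the whole estimate takes place on the torus $(\ZZ/n_1)\times(\ZZ/n_2)$ with no boundary to fight. Being a relabelling of $\kappa_N$, the matrix $\Delta_n$ is a positive contraction on each window, which is all that $P_n$ and $Q_n$ need.

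It remains to verify the limit hypothesis, and this is where the work lies. The two windows have equal width $n$, and after pulling $d(\mu)+p$ into the fundamental domain by the $H_n$-shifts $h_{n,\mu}(p)$ and $g_{n,\mu}(p)$ one finds that the base points $\alpha := d(\mu)+p+h_{n,\mu}(p)-n$ and $\beta := d(\mu)+p+g_{n,\mu}(p)-\lceil 3n/2\rceil$ both range over the torus as $p$ does and satisfy $\beta \equiv \alpha - \lceil n/2\rceil \pmod n$, while in each window the two arguments of $\Delta_n$ differ by the fixed vector $w := d(\mu)-d(\nu)$. So for fixed $\mu,\nu$ the quantity to be driven to $0$ is $\max_{\alpha}\big|\, g_w(\alpha) + g_w(\alpha \ominus \lceil n/2\rceil) - 1\,\big|$, where $g_w(\alpha) := \kappa_N\big(\beta_n(\alpha), \beta_n(\alpha \ominus w)\big)$ and $\ominus$ is torus subtraction. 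I would choose $\beta_n$ to carry each fixed displacement to a displacement $o(N)$ and to carry the half-shift by $\lceil n/2\rceil$ to the one-dimensional half-shift by $\lceil N/2\rceil$ up to $o(N)$; each $g_w$ is then, up to a uniform $o(1)$ error absorbed by the $O(1/N)$-Lipschitz continuity of $\kappa_N$, a fixed-offset slice of the one-dimensional tent, and the overlay collapses to the one-dimensional sum $\kappa_N(i,i-c) + \kappa_N(i - \lceil N/2\rceil, i - \lceil N/2\rceil - c)$, which tends to $1$ uniformly by the computation of Winter and Zacharias in \cite[Section~7]{WinterZacharias:Adv10}.

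I expect the uniform control of this reduction to be the main obstacle. The column-major enumeration $\beta_n(x) = x_1 + n_1 x_2$ already does most of the work: its one-dimensional coordinate $\beta_n(\alpha) = \alpha_1 + n_1\alpha_2$ is governed by the slow coordinate $\alpha_2$, with the fast coordinate contributing only an $O(n_1) = o(N)$ fluctuation, and the torus half-shift induces the one-dimensional shift by $n_1\lceil n_2/2\rceil = \lceil N/2\rceil + o(N)$. The delicate point is that for odd $n_i$ the half-shift is not an involution, so on a thin slice through the centre of the box the induced one-dimensional shift misses $\lceil N/2\rceil$ by nearly a full period; there one exploits that $\kappa_N$ vanishes at both ends of the interval, so that the two phases agree — both close to $0$ — exactly where the index-matching degenerates, and the overlay still sums to approximately $1$. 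Making these estimates uniform in $p$ and simultaneously for every $w = d(\mu)-d(\nu)$, across both parities of $n_1$ and $n_2$, is the technical heart of Theorem~\ref{thm: order 1 approximation}.
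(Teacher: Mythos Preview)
Your strategy is sound and would go through, but your construction of $\Delta_n$ is genuinely different from the paper's. You pull back the single one-dimensional tent $\kappa_N$ along a column-major enumeration $\beta_n$ of the box $[0,n)$; the paper instead takes the coordinate-wise average
\[
\Delta_n\big((i_1,i_2),(j_1,j_2)\big) \;=\; \tfrac12\big(\kappa_{n_1}(i_1,j_1) + \kappa_{n_2}(i_2,j_2)\big),
\]
that is, $\Delta_n = \tfrac12(\kappa_{n_1}\otimes\mathsf{A}_{n_2} + \mathsf{A}_{n_1}\otimes\kappa_{n_2})$ in $M_{n_1}\otimes M_{n_2}$, where $\mathsf{A}_m$ is the all-ones $m\times m$ matrix. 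Positivity is then immediate (a sum of tensors of positives), and the associated Schur multiplier is written explicitly as an average of cut-downs by projections, bypassing your inflation argument. The real payoff is that the limit hypothesis of Proposition~\ref{p: order 1 approximation} decouples completely: one finds $\Delta_{n,1}^{\mu,\nu}(p) + \Delta_{n,2}^{\mu,\nu}(p) - 1 = \tfrac12(\zeta_1 - 1) + \tfrac12(\zeta_2 - 1)$, where each $\zeta_j$ is precisely the one-dimensional Winter--Zacharias overlay sum in coordinate~$j$, and one reads off an explicit uniform bound of order $(1 + |d(\mu)-d(\nu)|)/\min_j\lceil n_j/2\rceil$ with no case analysis at all. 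Your bijection route is more faithful to the heuristic in the introduction, but it entangles the two coordinates: the column-major map sends a fixed $2$-dimensional displacement $w$ with $w_2 \ne 0$ to a $1$-dimensional displacement of order $N$ on the thin slice where the slow coordinate wraps, and you must then argue separately that on that slice the base point sits near an end of $[0,N)$ (where $\kappa_N$ is small) while its half-shifted partner lands near the middle (where $\kappa_N \approx 1$). This does go through --- though the genuine delicacy is this slow-coordinate wraparound rather than the parity of the half-shift you flag --- but the paper's separable choice avoids all of it and would extend verbatim to $k$-graphs for arbitrary~$k$.
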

\begin{proof}
For $m \in \NN$, let $\mathsf{A}_{m}$ denote the $m\times m$ matrix with all entries
equal to 1. For $n \in \NN^2$, define
\[\Delta_{n}
    := \frac{1}{2} \left(\kappa_{n_1} \otimes \mathsf{A}_{n_2} + \mathsf{A}_{n_1} \otimes \kappa_{n_2}\right)
    : \CC^{n_1} \otimes \CC^{n_2} \to \CC^{n_1} \otimes \CC^{n_2}.
\]
Since $\CC^{n_1} \otimes \CC^{n_2}$ is a finite-dimensional Hilbert space, $\Delta_{n}$
can be regarded as an $n_1n_2 \times n_1n_2$ matrix.  Since $\kappa_{n_1} \otimes
\mathsf{A}_{n_2}$ and $\mathsf{A}_{n_1} \otimes \kappa_{n_2}$ are positive elements in
the $C^{*}$-algebra $M_{n_1} \otimes M_{n_2}$, the matrix $\Delta_{n}$ is also positive.
Write $\{e_{i}\}$ for the canonical orthonormal basis elements of $\CC^{n_1}$ and of
$\CC^{n_2}$. Then
\begin{align*}
\Delta_{n} (i_{1}, i_{2}, j_{1}, j_{2}) &= \langle \Delta_{n} e_{i_{1}} \otimes e_{i_{2}}, e_{j_{1}} \otimes e_{j_{2}} \rangle \\
	&= \frac{1}{2} \big(\langle \kappa_{n_1} e_{i_{1}},  e_{j_{1}} \rangle \langle \mathsf{A}_{n_2} e_{i_{2}}, e_{j_{2}} \rangle
            + \langle \mathsf{A}_{n_1} e_{i_{1}},  e_{j_{1}} \rangle \langle \kappa_{n_2} e_{i_{2}}, e_{j_{2}} \rangle \big) \\
	&= \frac{1}{2} \big(\kappa_{n_1} (i_{1}, j_{1}) + \kappa_{n_2} (i_{2}, j_{2})\big).
\end{align*}
Define $M^{n,1} \in M_{\Lambda^{[n, 2n)}}$ by $M^{n,1}_{\mu, \nu} = \Delta_{n} (d(\mu) -
n, d(\nu) - n)$ and define $M^{n,2} \in M_{\Lambda^{[\lceil3n/2\rceil,
\lceil5n/2\rceil)}}$ by $M^{n,2}_{\mu, \nu} = \Delta_{n} (d(\mu) - \lceil \frac{3n}{2}
\rceil,  d(\nu) -\lceil \frac{3n}{2} \rceil)$. We claim that Schur multiplication by
$M^{n,i}$ is a completely positive contraction for $i = 1,2$. We just argue the case $i =
1$ and when $n_1$ and $n_2$ are even; the other cases are similar. For $1 \le j \le
n_1/2$, let $\Phi^{1,j}$ be the strong-operator sum $\sum_{|d(\lambda)_1 - (3n_1-1)/2| <
j} \theta_{\lambda,\lambda}$, and for $1 \le j \le n_2/2$, let $\Phi^{2,j} =
\sum_{|d(\lambda)_2 - (3n_2-1)/2| < j} \theta_{\lambda,\lambda}$, where $d( \lambda )_{i}$ denotes the $i$th coordinate of $d( \lambda )$. Each $\Phi^{i,j}$ is a
projection, and so $\Phi^i : a \mapsto \sum^{n_i/2}_{j=1} \frac{1}{n_i/2 + 1} \Phi^{i,j}
a \Phi^{i,j}$ is a completely positive contraction. Schur multiplication by $M^{n,1}$ is
equal to $\frac12(\Phi^1 + \Phi^2)$, and so is itself a completely positive contraction.

For $p < q \in \NN^2$, define $R^{q}_p \in \Bb(\ell^2(\Lambda))$ to be the
strong-operator sum
\[\textstyle
R^q_p
    = \sum_{\lambda \in \Lambda^{[p,q)}} \theta_{\lambda, \lambda}.
\]
Define $\ftn{P_{n}}{\Tt C^{*} (\Lambda)}{\Kk_{\Lambda^{[n, 2n)}}} \subseteq \Kk_\Lambda$
by $P_{n}(a)  =  M^{n,1}
* (R^{2n}_n a R^{2n}_n)$ and define $\ftn{Q_{n}}{\Tt C^{*} (\Lambda)}{\Kk_{\Lambda^{[\lceil
3n/2 \rceil, \lceil 5n/2 \rceil)}}} \subseteq \Kk_\Lambda$ by $Q_{n}(a) = M^{n,2} *
(R^{\lceil 5n/2 \rceil}_{\lceil 3n/2 \rceil} a R^{\lceil 5n/2 \rceil}_{\lceil 3n/2
\rceil})$. Since Schur multiplication by each $M^{n,i}$ is a completely positive,
contractive linear map and since $R^{2n}_n$ and $R^{\lceil 5n/2 \rceil}_{\lceil 3n/2
\rceil}$ are projections, $P_{n}$ and $Q_{n}$ are completely positive, contractive linear
maps.

We will show that $\Delta_{n}$, $P_{n}$, and $Q_{n}$ satisfy the hypotheses of
Proposition~\ref{p: order 1 approximation}.  Fix $\mu, \nu \in \Lambda$ with $s(\mu) =
s(\nu)$.  Recall that the series $\sum_{\tau \in \Lambda} \theta_{\mu \tau, \nu \tau}$
converges strictly to $t_\mu t^*_\nu$. Since $\theta_{\lambda, \lambda} \theta_{\mu \tau,
\nu \tau} \theta_{\beta, \beta} = \delta_{\lambda, \mu\tau} \delta_{\beta,\nu\tau}
\theta_{\mu \tau, \nu \tau}$,
\[
R^{2n}_n t_{\mu} t_{\nu}^{*} R^{2n}_n
    = \sum_{\substack{\tau \in s(\mu)\Lambda \\ n \leq d(\mu \tau), d(\nu \tau) < 2n}} \theta_{\mu\tau, \nu\tau}  \quad \text{and} \quad
R^{\lceil 5n/2 \rceil}_{\lceil 3n/2\rceil} t_{\mu} t_{\nu}^{*} R^{\lceil 5n/2 \rceil}_{\lceil 3n/2\rceil}
    = \sum_{\substack{\tau \in s(\mu)\Lambda \\
             \left\lceil \frac{3n}{2} \right\rceil \leq d(\mu \tau), d(\nu \tau) < \left \lceil \frac{5n}{2} \right\rceil}}
        \theta_{\mu\tau, \nu \tau}.
\]
Hence,
\[P_{n} (t_{\mu} t_{\nu}^{*})
    = \sum_{\substack{\tau \in s(\mu) \Lambda  \\ n \leq d(\mu \tau), d(\nu \tau)  < 2n}}
        \Delta_{n} (d(\mu \tau) - n, d (\nu \tau) - n) \theta_{\mu\tau, \nu \tau}
\]
and
\begin{align*}
Q_{n}(t_{\mu} t_{\nu}^{*})
    &= \sum_{\substack{\tau \in s(\mu) \Lambda  \\
        \left \lceil \frac{3n}{2} \right \rceil \leq d(\mu \tau), d(\nu \tau)  < \left \lceil \frac{5n}{2} \right \rceil}}
            {\textstyle\Delta_{n} \left(d(\mu \tau) - \left \lceil \frac{3n}{2} \right \rceil, d (\nu \tau)  - \left \lceil \frac{3n}{2} \right \rceil\right)
                \theta_{\mu\tau, \nu \tau}}.
\end{align*}

Let $p = (p_{1}, p_{2}) < n$, let $d(\mu) = (a_{1}, a_{2})$, and let $d(\nu) = (b_{1},
b_{2})$.  Let $h_{n, \mu}(p) = (h_{p_{1},n}^{\mu}, h_{p_{2}, n}^{\mu})$ be the unique
element in $H_{n}$ such that $n \leq d(\mu) + p + h_{n, \mu} (p) < 2n$ and let $g_{n,
\mu}(p) = (g_{p_{1}, n}^{\mu}, g_{p_{2}, n}^{\mu})$ be the unique element in $H_{n}$ such
that $\left\lceil \frac{3n}{2} \right\rceil \leq d(\mu) + p + g_{n, \mu} (p) <
\left\lceil \frac{5n}{2} \right\rceil$. Note that $h_{p_{j}, n}^{\mu}$ is the unique
element in $n_j\ZZ$ such that $n_j \leq a_j + p_{j} + h_{p_{j}, n}^{\mu} < 2n_j$ and
$g_{p_{j}, n}^{\mu}$ is the unique element in $n_j\ZZ$ such that $\big\lceil
\frac{3n_j}{2} \big\rceil \leq a_j + p_{j} + g_{p_{j}, n}^{\mu} < \big\lceil
\frac{5n_j}{2} \big\rceil$.

Set
\begin{align*}
\zeta_{n, \mu, p_{j}} := \kappa_{n_j} &(a_{j} + p_{j} + h_{p_{j}, n}^{\mu} - n_j, b_{j} + p_{j} + h_{p_{j}, n}^{\mu} - n_j) \\
    &\textstyle{} + \kappa_{n_j} \left(a_{j} + p_{j} + g_{p_{j}, n}^{\mu} - \big\lceil \frac{3n_j}{2} \big\rceil,
                b_{j} + p_{j} + g_{p_{j}, n}^{\mu} - \big\lceil \frac{3n_j}{2} \big\rceil \right).
\end{align*}
Using the definitions of the $h^\mu_{p_j, n}$ and $g^\mu_{p_j, n}$, one checks that
\[\textstyle
\big(a_{j} + p_{j} + h_{p_{j}, n}^{\mu} - n_j\big)
    - \big(a_{j} + p_{j} + g_{p_{j}, n}^{\mu} - \big\lceil \frac{3n_j}{2} \big\rceil\big)
    \in \big\{\big\lceil \frac{n_j}{2} \big\rceil, -\big\lfloor \frac{n_j}{2} \big\rfloor\big\}.
\]
For any integer $k$ and for any $x,y$, we have $\kappa_{k}(x,y) \ge \kappa_k(x,x) -
|x-y|/(\lceil\frac{k}{2}\rceil + 1)$, and $(\lceil k/2\rceil + 1)/(\lceil k/2\rceil + 2)
\le \kappa_k(x,x) + \kappa_k(x+\lceil\frac{k}{2}\rceil, x + \lceil\frac{k}{2}\rceil) \le
1$. Hence
\[
\frac{\lceil \frac{k}{2}\rceil + 1}{\lceil \frac{k}{2}\rceil + 2} - \frac{2|x-y|}{\lceil\frac{k}{2}\rceil + 1}
    \le \kappa_k(x,y) + \kappa_k\left({\textstyle x + \big\lceil\frac{k}{2}\big\rceil, y + \big\lceil\frac{k}{2}\big\rceil}\right) \le 1.
\]
Applying this inequality with $k = n_j$, $x = \min\big\{a_{j} + p_{j} + h_{p_{j},
n}^{\mu} - n_j, a_{j} + p_{j} + g_{p_{j}, n}^{\mu} - \big\lceil
\frac{3n_j}{2}\big\rceil\big\}$ and $y := \min\big\{b_{j} + p_{j} + h_{p_{j}, n}^{\mu} -
n_j, b_{j} + p_{j} + g_{p_{j}, n}^{\mu} - \big\lceil \frac{3n_j}{2} \big\rceil\big\}$, we
see that $|\zeta_{n, \mu, p_{j}} - 1| \le \big(1 + 2|a_j -
b_j|\big)/\big(\big\lceil\frac{n_j}{2}\big\rceil + 1\big)$.

For $p < n$, set $\Delta_{n,1}^{\mu, \nu}(p)  =  \Delta_{n} (d(\mu) + p + h_{n, \mu}(p) -
n, d(\nu) + p + h_{n,\mu}(p) - n)$ and $\Delta_{n,2}^{\mu, \nu}(p)  = \Delta_{n}(d(\mu) +
p + g_{n, \mu}(p) - \lceil \frac{3n}{2} \rceil, d(\nu) + p + g_{n, \mu}(p) - \lceil
\frac{3n}{2} \rceil)$.  Then
\[
\left|\Delta_{n,1}^{\mu, \nu}(p) + \Delta_{n,2}^{\mu, \nu}(p)  - 1 \right|
    = \left| \frac{1}{2} \left(\zeta_{n, \mu, p_{1}} - 1 \right)  + \frac{1}{2} \left(\zeta_{n, \mu,  p_{2}} - 1 \right) \right|
	\leq \frac{2(1 + |a_{1} - b_{1}| + |a_{2} - b_{2}|)}{2(\min\{\lceil \frac{n_1}{2} \rceil, \lceil \frac{n_2}{2} \rceil \} + 1)}.
\]
Hence
\[
\lim_{n \to \infty} \max \big\{|\Delta_{n,1}^{\mu, \nu}(p) + \Delta_{n,2}^{\mu, \nu}(p) - 1| \mathbin{\big|} p < n\big\} = 0.
\]

So $\Delta_{n}$, $P_{n}$ and $Q_{n}$ satisfy the hypotheses of Proposition~\ref{p: order
1 approximation}, which then says that $(\tilde{\iota}_{n})_{n \in \NN^k}$ has an
asymptotic order-1 approximation through AF-algebras.
\end{proof}

\begin{cor}\label{cor: order 1 approximation}
If $E$ and $F$ are row-finite directed graphs with no sinks, then $(\tilde{\iota}_{m, E}
\otimes \tilde{\iota}_{m, F})_{m \in \NN}$ has an asymptotic order-1 approximation
through AF-algebras.
\end{cor}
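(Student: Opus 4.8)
The plan is to exhibit the tensor-product maps as the diagonal restriction of the family $(\tilde\iota_n)_{n\in\NN^2}$ attached to a single $2$-graph, and then invoke Theorem~\ref{thm: order 1 approximation}. Write $E^*$ and $F^*$ for the path categories of $E$ and $F$, regarded as row-finite $1$-graphs with no sources, and put $\Lambda := E^*\times F^*$. Because a cartesian product of row-finite $k_i$-graphs with no sources is again row-finite with no sources, $\Lambda$ is a row-finite $2$-graph with no sources, so Theorem~\ref{thm: order 1 approximation} supplies an asymptotic order-$1$ approximation $(\mathcal F_m,\phi_m,\psi_m)$ of $(\tilde\iota_n)_{n\in\NN^2}$ through AF-algebras.

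First I would restrict to the diagonal. The sequence $n^m=(m,m)$ is strictly increasing in $\NN^2$ with each coordinate tending to infinity, so Remark~\ref{rmk:approximation subseq} shows that $(\tilde\iota_{(m,m)})_m$ has an asymptotic order-$1$ approximation through AF-algebras.

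Next I would identify $\tilde\iota_{(m,m)}$ with $\tilde\iota_{m,E}\otimes\tilde\iota_{m,F}$ up to isomorphism. Lemma~\ref{l:prodgraph}, together with the identification of Remark~\ref{rmk:pathcat}, gives $(\tilde\iota_m\otimes\tilde\iota_m)\circ\Theta_\Lambda=\Theta_{E^*(m)\times F^*(m)}\circ\tilde\iota_{(m,m)}$. On the other hand, the isomorphisms $\psi_E,\psi_F$ of Remark~\ref{rmk:C*(E)congC*(E*)} and their counterparts $\psi_{E(m)},\psi_{F(m)}$ (using Lemma~\ref{lem:E*(m)congE(m)*} to identify $C^*(E(m)^*)$ with $C^*(E^*(m))$) satisfy $\tilde\iota_m\circ\psi_E=\psi_{E(m)}\circ\tilde\iota_{m,E}$ and likewise for $F$ by Lemma~\ref{l:equivalence construction}; tensoring yields $(\tilde\iota_m\otimes\tilde\iota_m)\circ(\psi_E\otimes\psi_F)=(\psi_{E(m)}\otimes\psi_{F(m)})\circ(\tilde\iota_{m,E}\otimes\tilde\iota_{m,F})$. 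Setting $\rho:=\Theta_\Lambda^{-1}\circ(\psi_E\otimes\psi_F)$ and $\sigma_m:=(\psi_{E(m)}\otimes\psi_{F(m)})^{-1}\circ\Theta_{E^*(m)\times F^*(m)}$, these two identities combine to give $\sigma_m\circ\tilde\iota_{(m,m)}\circ\rho=\tilde\iota_{m,E}\otimes\tilde\iota_{m,F}$.

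Finally I would transport the approximation across these isomorphisms: the triple $(\mathcal F_m,\sigma_m\circ\phi_m,\psi_m\circ\rho)$ is an asymptotic order-$1$ approximation of $(\tilde\iota_{m,E}\otimes\tilde\iota_{m,F})_m$. Indeed, precomposing the completely positive contractions $\psi_m$ with the $*$-isomorphism $\rho$ leaves them completely positive and contractive; postcomposing $\phi_m$ with the $*$-isomorphism $\sigma_m$ preserves contractivity and the order-zero property on each AF summand $\mathcal F_m^{(i)}$; and since $\sigma_m$ is isometric, $\|\sigma_m\phi_m\psi_m\rho(a)-(\tilde\iota_{m,E}\otimes\tilde\iota_{m,F})(a)\|=\|\phi_m\psi_m(\rho(a))-\tilde\iota_{(m,m)}(\rho(a))\|\to0$ for every $a$. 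The only steps demanding care are the diagram chase identifying $\sigma_m\circ\tilde\iota_{(m,m)}\circ\rho$ with the tensor-product map, where the several identifications of Remarks~\ref{rmk:pathcat} and~\ref{rmk:C*(E)congC*(E*)} must be kept consistent, and the observation that an asymptotic order-$1$ factorisation is invariant under pre- and post-composition with $*$-isomorphisms; neither presents a genuine difficulty.
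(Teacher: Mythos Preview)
Your proposal is correct and follows essentially the same route as the paper: apply Theorem~\ref{thm: order 1 approximation} to the $2$-graph $E^*\times F^*$, restrict to the diagonal via Remark~\ref{rmk:approximation subseq}, and then conjugate by the isomorphisms from Lemma~\ref{l:prodgraph} and Lemma~\ref{l:equivalence construction} to pass from $\tilde\iota_{(m,m)}$ to $\tilde\iota_{m,E}\otimes\tilde\iota_{m,F}$. The only difference is cosmetic: you spell out explicitly why pre- and post-composition with $*$-isomorphisms preserves an asymptotic order-$1$ factorisation, whereas the paper leaves this implicit.
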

\begin{proof}
Let $\ftn{\tilde{\iota}_{m,1}}{C^{*} (E^{*})}{C^{*} (E^{*} (m))}$,
$\ftn{\tilde{\iota}_{m,2}}{C^{*} (F^{*})}{C^{*} (F^{*} (m))}$, $\ftn{\tilde{\iota}_{(m,
m)}}{C^{*} (E^{*} \times F^{*})}{C^{*} ((E^{*} \times F^{*})((m,m)))}$ be the
homomorphisms defined in Lemma~\ref{l:homomorphism KribSolel} for the 1-graphs $E^{*}$,
$F^{*}$, and the 2-graph $E^{*} \times F^{*}$ respectively.  By Lemma~\ref{l:prodgraph},
$\tilde{\iota}_{m,1} \otimes \tilde{\iota }_{m, 2} = \Theta_{E^{*} (m) \times F^{*} (m)}
\circ  \tilde{\iota}_{(m, m)} \circ \Theta_{E^{*} \times F^{*}}^{-1}$, where
$\Theta_{E^{*} \times F^{*}}$ and $\Theta_{E^{*} (m) \times F^{*} (m)}$ are isomorphisms.
By Theorem~\ref{thm: order 1 approximation} and Remark~\ref{rmk:approximation subseq},
the sequence $(\tilde{\iota}_{(m, m)})_{m \in \NN}$ has an asymptotic order-1
approximation through AF-algebras.  Hence, $(\tilde{\iota}_{m,1} \otimes
\tilde{\iota}_{m, 2})_{ m \in \NN}$ has an asymptotic order-1 approximation through AF-algebras.  By
Lemma~\ref{l:equivalence construction}, there exist isomorphisms $\ftn{\psi_{E}}{C^{*}
(E)}{C^{*} (E^{*})}$, $\ftn{\psi_{F}}{C^{*} (F)}{C^{*} (F^{*})}$,
$\ftn{\psi_{E(m)}}{C^{*} (E(m))}{C^{*} (E^{*}(m))}$, and $\ftn{\psi_{F(m)}}{C^{*}
(F(m))}{C^{*} (F^{*}(m))}$ such that $\tilde{\iota}_{m,E} = \psi_{E(m)}^{-1} \circ
\tilde{\iota}_{m,1} \circ \psi_{E}$ and $\tilde{\iota}_{m,F} = \psi_{F(m)}^{-1} \circ
\tilde{\iota}_{m,2} \circ \psi_{F}$. Hence,
\[\tilde{\iota}_{m, E} \otimes \tilde{\iota}_{m, F} = (\psi_{E(m)} \otimes \psi_{F(m)})^{-1}
    \circ  (\tilde{\iota}_{m,1} \otimes  \tilde{\iota}_{m,2}) \circ (\psi_{E} \otimes \psi_{F}).
\]
Thus $(\tilde{\iota}_{m, E} \otimes \tilde{\iota}_{m, F})_{m \in \NN}$ has an asymptotic
order-1 approximation through AF-algebras.
\end{proof}

\section{Nuclear dimension of UCT-Kirchberg algebras}\label{sec:main}

In this section, we show that all UCT-Kirchberg algebras have nuclear dimension~1.  We
already know from \cite{Enders:arXiv:1405.6538} that every UCT-Kirchberg algebra with
torsion free $K_{1}$-group has nuclear dimension~1. So we first show that each
UCT-Kirchberg algebra with trivial $K_{0}$-group and finite $K_{1}$-group has nuclear
dimension~1, and then prove our main theorem.

\begin{dfn}
A \emph{Kirchberg algebra} is a separable, nuclear, simple, purely infinite
$C^{*}$-algebra.  A \emph{UCT-Kirchberg algebra} is a Kirchberg algebra in the UCT class
of \cite{RosenbergSchochet:Duke87}.
\end{dfn}

For each finite abelian group $T$, let $E_{T}$ be an infinite, row-finite, strongly
connected graph such that $K_*(C^{*} (E_{T})) = ( T, \{0\} )$ and $C^{*} (E_{T})$ is a UCT-Kirchberg algebra (note that strongly connected implies that $E_{T}$ has no sinks and
sources).  Let $F_{\ZZ}$ be an infinite, row-finite, strongly connected graph such that
$K_{*} (C^{*} (F_{\ZZ}))  = (\{0\}, \ZZ)$ and $C^{*} (F_{\ZZ})$ is a UCT-Kirchberg
algebra.  Note that $E_{T}$ and $F_{\ZZ}$ exist by
\cite[Theorem~1.2]{Szymanski:IUMJ2002}.

\begin{lem}\label{l:nuclear dimension 2-graph}
Let $T$ be a finite abelian group.  Then the nuclear dimension of $C^{*} (E_{T}) \otimes
C^{*} (F_{\ZZ})$ is 1.  Consequently, every UCT-Kirchberg algebra with $K_{0}$ trivial
and $K_{1}$ finite has nuclear dimension 1.
\end{lem}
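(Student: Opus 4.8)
The plan is to model $C^*(E_T)\otimes C^*(F_\ZZ)$ as a $2$-graph algebra, transport the order-$1$ factorisations of Section~\ref{sec:order1factorisations} to a self-map that implements a $K$-theory isomorphism, and then finish by classification. I would set $\Lambda:=E_T^*\times F_\ZZ^*$, a row-finite $2$-graph with no sources, and $A:=C^*(\Lambda)$. By \cite[Corollary~3.5(iv)]{KumjianPask:NYJM00} and Remark~\ref{rmk:C*(E)congC*(E*)}, $A\cong C^*(E_T)\otimes C^*(F_\ZZ)$, so $A$ is a tensor product of UCT-Kirchberg algebras and is therefore itself a UCT-Kirchberg algebra. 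Applying the K\"unneth theorem to $K_*(C^*(E_T))=(T,0)$ and $K_*(C^*(F_\ZZ))=(0,\ZZ)$ gives $K_*(A)\cong(0,T)$, the $\operatorname{Tor}$ term vanishing because $\ZZ$ is free. Since $A$ is purely infinite it is not AF, so $\ndim(A)\ge 1$, and the work is to prove $\ndim(A)\le 1$.

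For the upper bound I would first invoke Theorem~\ref{thm: order 1 approximation}, which gives that the family $(\tilde\iota_n)_{n\in\NN^2}$ of homomorphisms out of $A$ has an asymptotic order-$1$ approximation through AF-algebras; by Remark~\ref{rmk:approximation subseq} the diagonal subsequence $(\tilde\iota_{(m,m)})_{m\in\NN}$ inherits one. Composing with the homomorphism $j_{(m,m)}\colon C^*(\Lambda(m,m))\to A\otimes\Kk$ of Lemma~\ref{lem:return homomorphism}, I would set $\beta_m:=j_{(m,m)}\circ\tilde\iota_{(m,m)}\colon A\to A\otimes\Kk$. Because $j_{(m,m)}$ is a genuine homomorphism, post-composing the order-zero summand maps of the factorisation with it again yields order-zero maps, so $(\beta_m)_m$ also has an asymptotic order-$1$ approximation through AF-algebras.

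The hard part will be the $K$-theory computation: that, under the stabilisation isomorphism $K_*(A\otimes\Kk)\cong K_*(A)$, the induced map $K_*(\beta_m)$ is multiplication by $m^2$ on $K_*(A)=(0,T)$. The strategy is to use Lemmas~\ref{l:prodgraph} and~\ref{l:equivalence construction} to factor $\beta_m$ as a tensor product of the two directed-graph composites $j_m\circ\tilde\iota_m$ associated with $E_T$ and $F_\ZZ$; granting that each such single-graph composite implements multiplication by $m$ on $K_*$, the K\"unneth theorem then forces multiplication by $m\cdot m$ on $K_*(A)$. Reconciling the tensor-product and K\"unneth identifications with the explicit formulas for $\tilde\iota$ and $j$ is where essentially all the content beyond Sections~\ref{sec:k-graphKS}--\ref{sec:order1factorisations} lies. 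Granting it, I would choose $m=m_\ell:=1+\ell\,e(T)$, where $e(T)$ denotes the exponent of $T$, so that $m_\ell\to\infty$ and $m_\ell^2\equiv 1\pmod{e(T)}$; then $K_*(\beta_{m_\ell})$ is the identity on $(0,T)$. As $A\otimes\Kk$ is simple, the nonzero homomorphism $\beta_{m_\ell}$ is injective with full range, and the UCT identifies its $KK$-class with that of the canonical stabilising embedding $\iota_A\colon a\mapsto a\otimes\theta$ for a fixed minimal projection $\theta\in\Kk$. The Kirchberg--Phillips theorem then shows that $\beta_{m_\ell}$ is approximately unitarily equivalent to $\iota_A$.

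To conclude, conjugating the order-$1$ AF factorisations of $\beta_{m_\ell}$ by the implementing unitaries preserves contractivity and the order-zero property, and so produces an asymptotic order-$1$ approximation of $\iota_A$ itself through AF-algebras. This is exactly the situation handled by the nuclear-dimension estimates of \cite{RuizSimsTomforde:arXiv:1312.0507}, following \cite{WinterZacharias:Adv10}: AF-algebras have nuclear dimension $0$, so the factorisations refine to pass through finite-dimensional algebras, and invariance of nuclear dimension under stabilisation converts an order-$1$ factorisation of a stabilising embedding approximately unitarily equivalent to $\iota_A$ into the bound $\ndim(A)\le 1$. Hence $\ndim(A)=1$. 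For the final assertion, any UCT-Kirchberg algebra $B$ with $K_0(B)=0$ and $K_1(B)=T$ satisfies $K_*(B\otimes\Kk)\cong(0,T)\cong K_*(A\otimes\Kk)$, so the stable form of the Kirchberg--Phillips theorem gives $B\otimes\Kk\cong A\otimes\Kk$; invariance of nuclear dimension under stabilisation then yields $\ndim(B)=\ndim(A)=1$.
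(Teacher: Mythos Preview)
Your proposal is correct and follows the same overall arc as the paper: build the map $j\circ\tilde\iota$, show it has an asymptotic order-$1$ AF factorisation, compute that it acts by multiplication by $m^2$ on $K_*$, choose $m$ so this is the identity on $T$, and then use Kirchberg--Phillips to conclude. The only organisational difference is that the paper works at the level of tensor products throughout: it invokes Corollary~\ref{cor: order 1 approximation} for $(\tilde\iota_{m,E_T}\otimes\tilde\iota_{m,F_\ZZ})_m$, composes with $j_{m,E_T}\otimes j_{m,F_\ZZ}$, and then reads off the $K$-theory from the K\"unneth theorem and \cite[Lemma~3.2]{RuizSimsTomforde:arXiv:1312.0507} applied to each factor separately. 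You instead apply Theorem~\ref{thm: order 1 approximation} to the product $2$-graph $\Lambda=E_T^*\times F_\ZZ^*$ and compose with the $2$-graph map $j_{(m,m)}$ of Lemma~\ref{lem:return homomorphism}, which means you must then verify that $j_{(m,m)}$ factors (up to the canonical isomorphisms) as $j_{m,E_T}\otimes j_{m,F_\ZZ}$. Lemmas~\ref{l:prodgraph} and~\ref{l:equivalence construction} only give this for $\tilde\iota$, not for $j$; the corresponding statement for $j$ is a routine check from the formula in Lemma~\ref{lem:return homomorphism}, but it is an extra step your outline assumes rather than supplies. The paper's route avoids it entirely. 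Your choice $m_\ell=1+\ell\,e(T)$ works just as well as the paper's $p_m=(|T|+1)^m$.
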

\begin{proof}
Consider the directed graphs $E_T$ and $F_\ZZ$. For $k \in \NN$, let
\[
\ftn{ \tilde{\iota}_{k, E_{T}} }{C^{*} ( E_{T} ) }{ C^{*} (E_{T}(k) ) }
    \quad\text{ and }\quad
\ftn{\tilde{\iota }_{k, F_{\ZZ}}}{ C^{*} (F_{\ZZ} ) }{ C^{*} ( F_{\ZZ} (k) ) }
\]
be the homomorphisms given in Lemma~\ref{l:Rout} for $E_{T}$ and $F_{\ZZ}$ respectively.
Let
\[
\ftn{ j_{k, E_{T}} }{C^{*} ( E_{T}(k) ) }{ C^{*} (E_{T}) \otimes \Kk }
    \quad\text{ and }\quad
\ftn{j_{k, F_{\ZZ}}}{ C^{*} ( F_{\ZZ}(k) ) }{ C^{*} ( F_{\ZZ} ) \otimes \Kk}
\]
be the homomorphisms given in \cite[Proposition~3.1]{RuizSimsTomforde:arXiv:1312.0507}
for $E_{T}$ and $F_{\ZZ}$ respectively.

By Corollary~\ref{cor: order 1 approximation}, there is an asymptotic order-1
approximation through AF-algebras for $(\tilde{\iota}_{k, E_{T}} \otimes \tilde{\iota
}_{k, F_{\ZZ}})_{k \in \NN}$. The composition of this sequence of homomorphisms with
$j_{k, E_{T}} \otimes j_{k, F_{\ZZ}}$ gives an asymptotic order-1 approximation through
AF-algebras for $((j_{k, E_{T}} \circ \tilde{\iota}_{k, E_{T}}) \otimes (j_{k, F_{\ZZ}}
\circ \tilde{\iota}_{k, F_{\ZZ}}))_{k \in \NN}$.

For $m \in \NN$, let $p_{m} = (|T|+1)^m$. Since the order of each element of $T$ divides
$|T|$, multiplication by each $p_{m}$ induces the identity map on $T$. Set $\gamma_{m} =
(j_{p_{m}, E_{T}} \circ \tilde{\iota}_{p_{m}, E_{T}}) \otimes (j_{p_{m}, F_{\ZZ}} \circ
\tilde{\iota}_{p_{m}, F_{\ZZ}})$.  By construction, $(\gamma_{m})_{m \in \NN}$ has an
asymptotic order-1 approximation through AF-algebras.  The K\"unneth formula in
\cite{RosenbergSchochet:Duke87} combined with
\cite[Lemma~3.2]{RuizSimsTomforde:arXiv:1312.0507} shows that $K_{1} (\gamma_{m})$ is
multiplication by $p_{m}^{2}$ on $K_{1} (C^{*} (E_{T}) \otimes C^{*} (F_{\ZZ})) = T$.
Thus, $K_{1} (\gamma_{m}) = \id_T$. Since $K_{0} (C^{*} (E_{T}) \otimes C^{*} (F_{\ZZ}))
= 0$ the map $K_{0} (\gamma_{m})$ is trivially the identity. Since $E_{T}$ and $F_{\ZZ}$
are infinite directed graphs, $C^{*} (E_{T})$ and $C^{*} (F_{\ZZ})$ are non-unital UCT-Kirchberg
algebras, and hence stable. The Universal Coefficient Theorem in
\cite{RosenbergSchochet:Duke87} and the Kirchberg-Phillips classification (cf.
\cite{kirchpureinf} and \cite{Phillips:Doc00}), show that there exist an isomorphism
$\ftn{\beta_{m}}{(C^{*} (E_{T}) \otimes \Kk) \otimes (C^{*} (F_{\ZZ}) \otimes \Kk)}{C^{*}
(E_{T}) \otimes C^{*} (F_{\ZZ})}$ and a unitary $ u_{m}$ in $\mathcal{M}
( C^{*} (E_{T}) \otimes C^{*} (F_{\ZZ}) )$ for each $m \in \NN$ such that
\begin{equation}\label{eq:order-1 approximation id}
\lim_{m \to \infty} \| u_{m} (\beta_{m} \circ \gamma_{m})(a) u_{m}^{*} - a \| = 0
    \quad\text{ for all $a \in C^{*} (E_{T}) \otimes C^{*} (F_{\ZZ})$.}
\end{equation}
Since $(\gamma_{m})_{m \in \NN}$ has an asymptotic order-1 approximation through
AF-algebras, so does $(\mathrm{Ad}(u_{m}) \circ \beta_{m} \circ \gamma_{m})_{m \in \NN}$.
So~(\ref{eq:order-1 approximation id}) implies that $\mathrm{id}_{C^{*} (E_{T}) \otimes
C^{*} (F_{\ZZ})}$ has an asymptotic order-1 approximation through AF-algebras. Hence
\cite[Lemma~2.9]{RuizSimsTomforde:arXiv:1312.0507} shows that $\ndim\big(C^{*} (E_{T})
\otimes C^{*} (F_{\ZZ })\big) = 1$.

Let $A$ be a UCT-Kirchberg algebra with $K_{0}$ trivial and $K_{1}$ finite. Then $K_{*}
(A) \cong K_{*} (C^{*} (E_{T}) \otimes C^{*} (F_{\ZZ }) )$ where $T = K_{1} (A)$.  By
Kirchberg-Phillips classification, $A \otimes \Kk \cong C^{*} (E_{T}) \otimes C^{*}
(F_{\ZZ })$.  Hence, $\ndim(A \otimes \Kk) = 1$; so
\cite[Corollary~2.8]{WinterZacharias:Adv10} gives $\ndim(A) = 1$.
\end{proof}

\begin{rmk}\label{rmk:SpecSeq}
A key step in the preceding proof was to show that the maps $j_n \circ \iota_n :
C^*(\Lambda) \to C^*(\Lambda) \otimes \Kk_{\Lambda^{<n}}$ induce multiplication by
$n_1n_2$ in $K$-theory. We were able to do this using the K\"unneth formula because
tensor products of $1$-graph $C^*$-algebras provide a large enough class of models to
cover all the UCT-Kirchberg algebras in question. It seems likely that our techniques
could be used to compute the exact value of nuclear dimension for a large class of
nonsimple Kirchberg algebras with torsion in $K_1$ along the lines of
\cite{RuizSimsTomforde:arXiv:1312.0507}, but to do this, we need to know that $j_n \circ
\iota_n$ induces multiplication by $n_1 n_2$ in the $K$-theory of every ideal of
$C^*(\Lambda)$ for general $2$-graphs $\Lambda$. This can be proved using Evans'
calculation of $K$-theory for $k$-graph algebras \cite{Evans:NYJM08} and naturality of
Kasparov's spectral sequence (see \cite[page~185]{KumjianPaskSims:DM08}). But this would
require introducing extraneous notation or digging into the proofs in
\cite{Evans:NYJM08}, so we have not pursued this approach here.
\end{rmk}

\begin{dfn}
A homomorphism $\ftn{\phi} {A}{B}$ is called \emph{full} if for all $a \in A$ with $a
\neq 0$, the closed two-sided ideal generated by $\phi(a)$ is equal to $B$.
\end{dfn}

\begin{lem}\label{l:directlimitsimple}
Let $(\phi_n : A_n \to A_{n+1})^\infty_{n=1}$ be a directed system of $C^*$-algebras, and
set $A = \varinjlim (A_{n}, \phi_n)$.  If there exists $N \in \NN$ such that $\phi_n$ is
full for all $n \geq N$, then $A$ is simple.  If, in addition, each $A_{n}$ is a finite
direct sum of UCT-Kirchberg algebras, then $A$ is a UCT-Kirchberg algebra.
\end{lem}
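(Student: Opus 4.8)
The plan is to derive simplicity from the fullness hypothesis and the elementary structure of ideals in an inductive limit, and then to obtain the remaining Kirchberg-algebra properties from permanence of the relevant classes under countable inductive limits. For the first assertion, write $\phi_{n,\infty}\colon A_n\to A$ and $\phi_{n,m}\colon A_n\to A_m$ for the canonical maps, so that $A=\overline{\bigcup_n\phi_{n,\infty}(A_n)}$ and the subalgebras $\phi_{n,\infty}(A_n)$ increase with $n$. Let $I$ be a nonzero closed two-sided ideal of $A$; I will show $I=A$. Put $I_n:=\phi_{n,\infty}^{-1}(I)$, a closed ideal of $A_n$ with $\phi_n(I_n)\subseteq I_{n+1}$; the standard description of ideals of an inductive limit gives $I=\overline{\bigcup_n\phi_{n,\infty}(I_n)}$. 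Since $I\neq 0$, some $\phi_{n,\infty}(I_n)\neq 0$, and because the images $\phi_{m,\infty}(I_m)$ increase we may enlarge $n$ to arrange $n\geq N$ and then pick $a\in I_n$ with $\phi_{n,\infty}(a)\neq 0$, so that $\phi_n(a)\neq 0$.

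Fullness now applies uniformly, and no induction is needed. For each $m\geq n$ the element $\phi_{n,m}(a)\in I_m$ is nonzero, since $\phi_{m,\infty}(\phi_{n,m}(a))=\phi_{n,\infty}(a)\neq 0$; hence $\phi_m(\phi_{n,m}(a))=\phi_{n,m+1}(a)$ lies in $I_{m+1}$, and as $\phi_m$ is full (here $m\geq N$) the closed ideal it generates is all of $A_{m+1}$. Therefore $I_{m+1}=A_{m+1}$ for every $m\geq n$, that is, $I_m=A_m$ for all $m\geq n+1$. Consequently $\phi_{m,\infty}(A_m)=\phi_{m,\infty}(I_m)\subseteq I$ for all such $m$, and taking the closed union over this cofinal tail gives $A\subseteq I$. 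Thus $I=A$, and since $A\supseteq\phi_{n,\infty}(a)\neq 0$ we have $A\neq 0$, so $A$ is simple.

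For the second assertion, assume in addition that each $A_n$ is a finite direct sum of UCT-Kirchberg algebras; I must verify that $A$ is separable, nuclear, simple, purely infinite, and in the UCT class. Simplicity is the first assertion, and $A=\overline{\bigcup_n\phi_{n,\infty}(A_n)}$ is separable as the closure of a countable union of separable subsets. Nuclearity and membership in the bootstrap (UCT) class are each preserved under finite direct sums and under countable inductive limits, so $A$ inherits both from the $A_n$. The one substantive property, and the step I expect to be the main obstacle, is pure infiniteness: the $A_n$ are direct sums and hence not simple, so Cuntz comparison between positive elements supported on different summands is unavailable, and one cannot simply quote the classical result for simple inductive limits.

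To handle pure infiniteness I would use the Kirchberg--R\o rdam characterisation that a $C^*$-algebra is purely infinite precisely when every nonzero positive element is properly infinite. A finite direct sum of purely infinite simple algebras is purely infinite, as one checks by testing proper infiniteness coordinatewise (each nonzero coordinate lies in a purely infinite simple summand, where every nonzero positive element is properly infinite); thus each $A_n$ is purely infinite. Proper infiniteness of a positive element is preserved by any $*$-homomorphism, via $\phi\otimes\id_{M_2}$ and the invariance of Cuntz subequivalence under $*$-homomorphisms, so every nonzero positive element of $\phi_{n,\infty}(A_n)$ is properly infinite in $A$. The local $\varepsilon$-characterisation of proper infiniteness, together with the standard Cuntz-comparison perturbation lemmas, then transfers this property to the norm limits that fill out $A=\overline{\bigcup_n\phi_{n,\infty}(A_n)}$, showing that every nonzero positive element of $A$ is properly infinite; equivalently, one may invoke the Kirchberg--R\o rdam theorem that pure infiniteness passes to inductive limits. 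Hence $A$ is purely infinite, and combined with the simplicity proved above it is purely infinite and simple. Being separable, nuclear, simple, purely infinite and in the UCT class, $A$ is a UCT-Kirchberg algebra, as required.
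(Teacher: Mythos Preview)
Your argument is correct, and the simplicity portion is essentially the paper's proof verbatim: pull back the ideal to each $A_n$, pick a stage where the pullback is nonzero and beyond $N$, and use fullness to force $I_{m+1}=A_{m+1}$ for all large $m$.

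For the second assertion the two proofs diverge at the pure-infiniteness step. The paper works entirely with projections: every nonzero projection in a finite direct sum of Kirchberg algebras is properly infinite; fullness forces the $\phi_n$ to be injective for $n\ge N$, so nonzero projections map to nonzero (hence properly infinite) projections; approximating an arbitrary projection in $A$ by one in the dense union shows every nonzero projection of $A$ is properly infinite, and together with simplicity (and, implicitly, real rank zero inherited from the $A_n$) this yields that $A$ is purely infinite simple. You instead invoke the Kirchberg--R\o rdam characterisation of pure infiniteness via proper infiniteness of positive elements, observe that a finite direct sum of purely infinite simple algebras is purely infinite in that sense, and then cite the permanence of pure infiniteness under inductive limits. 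Both routes are valid: the paper's is lighter on machinery but tacitly relies on $A$ having sufficiently many projections, while yours imports a stronger theorem but is self-contained in that it never needs to locate projections in $A$.
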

\begin{proof}
Let $I$ be a nonzero ideal of $A$.  Then $I_{n} = \phi_{n, \infty}^{-1}(I)$ is an ideal
of $A_{n}$ and $\phi_n (I_{n}) \subseteq I_{n+1}$.  Since $I$ is nonzero, there exists
$M$ such that $I_{n} \neq 0$ for all $n \geq M$.  So for $n \geq \max\{N, M \}$ the ideal
generated by $\phi_n(I_{n})$ is $A_{n+1}$.  Since $\phi_n (I_{n}) \subseteq I_{n+1}$, we
have $I_{n+1} = A_{n+1}$ for $n \geq \max \{N, M \}$, and so $I = A$.

Suppose each $A_{n}$ is a finite direct sum of UCT-Kirchberg algebras.  Then every
nonzero projection of any $A_{n}$ is properly infinite.  Since $\phi_n$ is injective for
$n \geq N$ (because the maps are full), $\phi_n$ takes properly infinite projections to
properly infinite projections for all $n \geq N$.  Thus, every nonzero projection of $A$
is properly infinite.  Hence, $A$ is a purely infinite simple $C^{*}$-algebra.  Since
each $A_{n}$ is separable, nuclear, and in the UCT class, $A$ is too. Thus, $A$ is a
UCT-Kirchberg algebra.
\end{proof}

\begin{lem}\label{l:direct limit decomposition}
Let $A$ be a stable UCT-Kirchberg algebra. Then there exist sequences
$(\Lambda_n)_{n\in \NN}$ and $(\Gamma_n)_{n \in \NN}$ of row-finite $2$-graphs with no
sources, and homomorphisms $\phi_n : C^*(\Lambda_n) \oplus C^*(\Gamma_n) \to
C^*(\Lambda_{n+1}) \oplus C^*(\Gamma_{n+1})$ such that: each $C^*(\Lambda_n)$ and each
$C^*(\Gamma_n)$ is a stable UCT-Kirchberg algebra with finitely-generated $K$-theory;
each $K_1(C^*(\Lambda_n))$ is free abelian; each $K_0(C^*(\Gamma_n))$ is trivial and each
$K_1(C^*(\Gamma_n))$ is finite; and $A \cong \varinjlim(C^*(\Lambda_n) \oplus C^*(\Gamma_n),
\phi_n)$.
\end{lem}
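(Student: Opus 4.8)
The plan is to realise $A$ as an inductive limit of the required shape by first expressing its $K$-theory as a limit of finitely generated groups, then realising each finite stage by a direct sum of two $2$-graph algebras, and finally lifting the connecting maps on $K$-theory to $*$-homomorphisms using the Kirchberg--Phillips machinery. Since $A$ is separable, $K_0(A)$ and $K_1(A)$ are countable abelian groups; writing each as the increasing union of the subgroups generated by finitely many of its elements, I obtain inductive systems of finitely generated abelian groups with connecting maps $\alpha_{i,n}\colon G_{i,n}\to G_{i,n+1}$ and $K_i(A)=\varinjlim(G_{i,n},\alpha_{i,n})$ for $i=0,1$. For each $n$, decompose $G_{1,n}=F_n\oplus T_n$, where $T_n$ is the finite torsion subgroup and $F_n$ is free of finite rank.

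Next I realise the stages. For each $n$, Szyma\'nski's theorem \cite[Theorem~1.2]{Szymanski:IUMJ2002} (applicable since $F_n$ is free) provides an infinite, row-finite, strongly connected graph $E_n$ with $K_*(C^*(E_n))=(G_{0,n},F_n)$; I also fix such a graph $G_\infty$ with $K_*(C^*(G_\infty))=(\ZZ,\{0\})$. Setting $\Lambda_n:=E_n^*\times G_\infty^*$ and $\Gamma_n:=E_{T_n}^*\times F_\ZZ^*$ gives row-finite $2$-graphs with no sources, with $C^*(\Lambda_n)\cong C^*(E_n)\otimes C^*(G_\infty)$ and $C^*(\Gamma_n)\cong C^*(E_{T_n})\otimes C^*(F_\ZZ)$. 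Each tensor factor is a stable UCT-Kirchberg algebra, so each $C^*(\Lambda_n)$ and $C^*(\Gamma_n)$ is a stable UCT-Kirchberg algebra with finitely generated $K$-theory, and the K\"unneth theorem of \cite{RosenbergSchochet:Duke87} yields $K_*(C^*(\Lambda_n))=(G_{0,n},F_n)$ (so $K_1$ is free) and $K_*(C^*(\Gamma_n))=(\{0\},T_n)$ (so $K_0$ is trivial and $K_1$ finite). In particular the stage $C^*(\Lambda_n)\oplus C^*(\Gamma_n)$ has $K$-theory $(G_{0,n},\,F_n\oplus T_n)=(G_{0,n},G_{1,n})$.

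Now I lift the maps $\alpha_n:=(\alpha_{0,n},\alpha_{1,n})$. A $*$-homomorphism between the two direct sums is assembled from four component maps $\psi^{ji}$ (from the $i$-th source summand to the $j$-th target summand), subject only to the two maps into a common target summand having orthogonal ranges. The blocks of $\alpha_n$ distribute among these: the maps on $G_{0,n}$ and $F_n$ land in the $\Lambda$-summand, the block $F_n\to T_{n+1}$ lands in the $\Gamma$-summand, the block $T_n\to T_{n+1}$ stays in the $\Gamma$-summand, and the block $T_n\to F_{n+1}$ is forced to be $0$ since $F_{n+1}$ is torsion-free and group homomorphisms preserve torsion. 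Because each target summand is a stable UCT-Kirchberg algebra, the Universal Coefficient Theorem \cite{RosenbergSchochet:Duke87} lifts each prescribed group homomorphism to a class in $KK$, and the Kirchberg--Phillips existence theorem (cf.\ \cite{kirchpureinf,Phillips:Doc00}) realises that class by a $*$-homomorphism; stability then lets me conjugate by isometries so that the two maps into each summand have orthogonal ranges, producing $\phi_n$ with $K_*(\phi_n)=\alpha_n$. I further arrange every component map to be nonzero---where its prescribed $K$-theory map vanishes I insert a $KK$-trivial injection through $\Oo_2$---so that $\phi_n$ is full.

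Finally set $B:=\varinjlim(C^*(\Lambda_n)\oplus C^*(\Gamma_n),\phi_n)$. Continuity of $K$-theory gives $K_*(B)=\varinjlim(G_{0,n},G_{1,n})=K_*(A)$, and Lemma~\ref{l:directlimitsimple} shows that $B$ is a UCT-Kirchberg algebra, since each stage is a finite direct sum of UCT-Kirchberg algebras and the $\phi_n$ are full; taking the connecting maps within the stable category (equivalently, noting that $B$ is non-unital) makes $B$ stable, whence $B\cong A$ by the Kirchberg--Phillips classification of stable UCT-Kirchberg algebras. I expect the main obstacle to be this lifting step: realising the $\alpha_n$ by honest $*$-homomorphisms between the \emph{non-simple} stages forces the orthogonality bookkeeping for the four components (which is where stability is used) together with the extra care needed to guarantee fullness so that Lemma~\ref{l:directlimitsimple} applies; by comparison, the realisation of the individual summands as $2$-graph algebras is routine given Szyma\'nski's theorem and the K\"unneth formula.
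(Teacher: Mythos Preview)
Your proof is correct and follows essentially the same strategy as the paper: express $K_*(A)$ as an increasing union of finitely generated subgroups, split each $G_{1,n}$ into its torsion and free parts, realise the stages as cartesian products of $1$-graph path categories via Szyma\'nski's theorem, lift the inclusions to full $*$-homomorphisms using the UCT and Kirchberg--Phillips, and identify the limit with $A$ by Lemma~\ref{l:directlimitsimple} and classification. The paper is terser---it uses the line graph $E_{\Kk}$ (with $C^*(E_{\Kk})\cong\Kk$) in place of your $G_\infty$ so that $C^*(\Lambda_n)$ is literally the stabilisation of $C^*(E_n)$, and it simply asserts the existence of the full connecting maps from Kirchberg--Phillips without spelling out the four-component decomposition, the orthogonality bookkeeping, or the $\Oo_2$-trick for fullness that you carefully supply.
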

\begin{proof}
Let $( G_{n,0} )_{n \in \NN}$ and $( G_{n,1} )_{ n \in \NN}$ be increasing families of finitely generated
abelian groups with $\bigcup_{n=1}^{\infty} G_{n, 0} = K_{0}(A)$ and $\bigcup_{n =
1}^{\infty} G_{n,1} = K_{1} (A)$.  Decompose each $G_{n,1}$ as $G_{n,1} = T( G_{n,1} )
\oplus F( G_{n,1} )$, where $T( G_{n,1} )$ is finite and $F( G_{n,1} )$ is free.

Let $E_{ \Kk }$ be the infinite row-finite graph $\cdots \to  \bullet \to \bullet \to \cdots$
(so $C^{*} ( E_{ \Kk} )  \cong \Kk$). For each $n$ apply
\cite[Theorem~1.2]{Szymanski:IUMJ2002} to obtain a row-finite strongly connected graph
$E_n$ such that $C^*(E_n)$ is a UCT Kirchberg algebra with $K_{*}(C^{*}( E_{n} ) ) =(
G_{n,0} , F( G_{n,1} ) )$.  Then each $\Lambda_n := E_{n}^{*} \times E_{\Kk}^{*}$ is a
row-finite 2-graph with no sources such that $C^*(\Lambda_n)$ is a stable UCT-Kirchberg
algebra with $K_{*} ( C^{*}(\Lambda_n)) = ( G_{n,0} , F( G_{n,1} ) )$. For each $n$, let
$E_{T(G_{n,1})}$ and $F_\ZZ$ be as in Lemma~\ref{l:nuclear dimension 2-graph} and the preceding
discussion. Let $\Gamma_n := E^*_{T(G_{n,1})} \times F^*_\ZZ$. Then $\Gamma_n$ is a
row-finite 2-graph with no sources, and $C^*(\Gamma_n)$ is a stable UCT-Kirchberg algebra with
$K_{*}(C^*(\Gamma_n)) = ( 0 , T( G_{n,1} ) )$.

Each $K_{*}(C^*(\Lambda_n) \oplus C^*(\Gamma_n)) = (G_{n,0} , G_{n,1} )$.  By
Kirchberg-Phillips (cf. \cite{kirchpureinf} and \cite{Phillips:Doc00}), for each $n \in
\NN$, there exists a full homomorphism $\ftn{\phi_n}{C^*(\Lambda_n) \oplus
C^*(\Gamma_n)}{C^*(\Lambda_{n+1}) \oplus C^*(\Gamma_{n+1})}$ which in $K$-theory induces
the inclusion map $G_{n,i} \hookrightarrow G_{n+1, i}$. Therefore, $K_{i} ( \varinjlim (
C^*(\Lambda_n) \oplus C^*(\Gamma_n) , \phi_{n} ) ) \cong K_{i} ( A )$.  So, by
Lemma~\ref{l:directlimitsimple} and the Kirchberg-Phillips classification, $A \cong
\varinjlim ( C^*(\Lambda_n) \oplus C^*(\Gamma_n), \phi_{n} )$.
\end{proof}

\begin{thm}
Every UCT-Kirchberg algebra has nuclear dimension~1.
\end{thm}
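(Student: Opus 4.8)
The plan is to assemble the theorem from the two cases already in hand---torsion-free $K_1$ (Enders) and trivial $K_0$ with finite $K_1$ (Lemma~\ref{l:nuclear dimension 2-graph})---by realising an arbitrary stable UCT-Kirchberg algebra as a direct limit whose building blocks fall into these two classes, and then appealing to the permanence properties of nuclear dimension. All of the genuine analytic content has already been spent in Theorem~\ref{thm: order 1 approximation} and Lemma~\ref{l:nuclear dimension 2-graph}, so the final argument should be pure assembly.

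First I would reduce to the stable case: since $\ndim(A) = \ndim(A \otimes \Kk)$ by \cite[Corollary~2.8]{WinterZacharias:Adv10}, it is enough to show that every stable UCT-Kirchberg algebra has nuclear dimension~$1$. So let $A$ be stable. By Lemma~\ref{l:direct limit decomposition} I would write $A \cong \varinjlim\big(C^*(\Lambda_n) \oplus C^*(\Gamma_n), \phi_n\big)$, where each $C^*(\Lambda_n)$ is a UCT-Kirchberg algebra with free abelian (hence torsion-free) $K_1$, and each $C^*(\Gamma_n)$ is a UCT-Kirchberg algebra with trivial $K_0$ and finite $K_1$. Enders' theorem \cite{Enders:arXiv:1405.6538} then gives $\ndim(C^*(\Lambda_n)) = 1$, while Lemma~\ref{l:nuclear dimension 2-graph} gives $\ndim(C^*(\Gamma_n)) = 1$. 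Since the nuclear dimension of a finite direct sum is the maximum of those of the summands, each block $C^*(\Lambda_n) \oplus C^*(\Gamma_n)$ has nuclear dimension~$1$; and since nuclear dimension does not increase under inductive limits, i.e. $\ndim(\varinjlim A_n) \le \liminf_n \ndim(A_n)$ \cite[Proposition~2.3]{WinterZacharias:Adv10}, I would conclude $\ndim(A) \le 1$.

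It then remains only to rule out nuclear dimension~$0$. A $C^*$-algebra has nuclear dimension~$0$ precisely when it is AF, and AF-algebras are stably finite; but $A$ is purely infinite and nonzero, so it cannot be AF and hence $\ndim(A) \ge 1$. Combining the two bounds yields $\ndim(A) = 1$, and the general (non-stable) case is recovered via \cite[Corollary~2.8]{WinterZacharias:Adv10}. I expect no real obstacle at this final stage: the only points requiring care are invoking the correct permanence properties of nuclear dimension under direct sums and inductive limits, and confirming that the two families produced by Lemma~\ref{l:direct limit decomposition} genuinely land in the two classes of algebras whose nuclear dimension we have already computed---both of which are immediate from the stated properties of that decomposition.
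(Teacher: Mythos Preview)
Your proposal is correct and follows essentially the same route as the paper's proof: reduce to the stable case via \cite[Corollary~2.8]{WinterZacharias:Adv10}, apply Lemma~\ref{l:direct limit decomposition}, use Enders' theorem and Lemma~\ref{l:nuclear dimension 2-graph} on the two summands, and invoke the permanence properties \cite[Proposition~2.3]{WinterZacharias:Adv10} for direct sums and inductive limits. The only cosmetic difference is that the paper dispatches the lower bound $\ndim(A)\ge 1$ first by citing \cite[Remarks~2.2(iii)]{WinterZacharias:Adv10}, whereas you handle it at the end via the equivalent observation that purely infinite algebras are not AF.
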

\begin{proof}
Let $A$ be a UCT-Kirchberg algebra. Since Kirchberg algebras are not AF,
\cite[Remarks~2.2(iii)]{WinterZacharias:Adv10} shows that $A$ has nuclear dimension at
least 1. Corollary~2.8 of \cite{WinterZacharias:Adv10} shows that the nuclear dimension
of $A \otimes \Kk$ is the same as that of $A$, so we may assume that $A$ is stable.

By Lemma~\ref{l:direct limit decomposition}, $A  \cong \varinjlim ( B_{n} \oplus C_{n} ,
\phi_n)$ where $B_{n}$ and $C_{n}$ are UCT-Kirchberg algebras such that $K_{1} ( B_{n} )$
is free, $K_{0} ( C_{n} ) = 0$, and $K_{1} ( C_{n} )$ is a finite abelian group. By
Lemma~\ref{l:nuclear dimension 2-graph}, $\ndim(C_{n}) = 1$.  By
\cite[Theorem~4.1]{Enders:arXiv:1405.6538}, $\ndim(B_{n}) = 1$. Proposition~2.3(i) of
\cite{WinterZacharias:Adv10} implies that each $B_{n} \oplus C_{n}$ has nuclear dimension
1.  It now follows from \cite[Proposition~2.3(iii)]{WinterZacharias:Adv10} that $A$ has
nuclear dimension 1.
\end{proof}

\end{document}